\documentclass{Styles_Marcel_Padilla_Bachelor_Thesis}

\Title{Zeros of Random Sections on Line Bundles}
\Thesis{Bachelor Thesis} 
\Author{Marcel Padilla} 
\MNumber{351206} 
\Supervisor{Prof. Dr. Ulrich Pinkall} 
\Referee{Prof. Dr. Boris Springborn}
\Date{18.07.2016} 
\Language{english} 

\newcommand{\R}{\mathbb{R}}

\counterwithout{footnote}{chapter}

\begin{document}

\chapter{Abstracts}

\section{English}

\quad Sections of line bundles on 2 dimensional surfaces in 3 dimensional space can have many distinct shapes. For practical purposes we prefer smooth sections that are visibly easy to follow. This is why smoothing operators have been developed on discrete surfaces as in the inspirational paper \cite{Knoppel:2013:GOD} that can be applied to a any section to return another \emph{smoother} section. We are interested to make predictions on one aspect of the resulting smoothed section's structure, namely position of its signed zeros. The zeros are the most noticeable feature of a section where the section values circles around a specific point.
The purpose of this thesis is to predict the distribution of the smoothed section’s signed zeros with multiplicity that are given by applying the smoothing operator to randomly generated sections of hermitian line bundles on closed simplicial complexes. This will be done in a discrete setting consequently meaning that we will compute the expected sum of indices on each face. Why and how we do this is this thesis' purpose to explain.

\section{Deutsch}

\quad Schnitte von Linienb\"undel auf 2 dimensionalen Oberflaechen in 3 dimensionalem Raum koennen viele Formen annehmen. F\"ur diverse praktische zwecke bevorzugen wir moeglichst glatte Schnitte die angenehmer f\"ur die Augen sind. Genau deswegen wurden Gl\"attungsoperatoren f\"ur discrete Oberfl\"achen entwickelt wie im insperierenden Paper \cite{Knoppel:2013:GOD} die f\"ur jeden Schnitt einen neuen \emph{glatteren} Schnitt zur\"uck geben. Wir wollen eine entscheidene Vorhersage \"uber die Struktur von gegl\"atteten Schnitten machen, n\"amlich \"uber die Verteilung der Nullstellen mit Vorzeichen und Vielfachheit. Diese stellen die offensichtlichsten visuallen merkmalle von Richtungsfeldern da so wie sie in der Anwendung vorkommen. 
Der Zweck dieser Arbeit ist es eine Vorhersage \"uber die Verteilung der Nullstellen mit Vorzeichen von gegl\"atteten Schnitten von hermitischen Linienb\"undel auf Simplizialkomplexen zu machen. Da wir alles in einem discreten Setup durchf\"uhren werden l\"auft es darauf hinaus das wir f\"ur jedes Dreieck die erwartete Summe der Indices der Nullstellen berechnen müssen. Wie und warum wir das machen werden ist das Ziel dieser Arbeit zu beantworten.

\chapter{Introduction}

\section{Acknowledgments}

\quad The main ideas and foundations of this thesis are all collected from the blog posts on the private wordpress website named \emph{Discrete Spin Geometry} on \url{http://brickisland.net/dsg/}. Most of the results from the blog that will be relevant here where archived by Ulrich Pinkall and Felix Kn\"oppel\footnote{both from the Arbeitsgruppe Geometrie, Technische Universität Berlin, Institut für Mathematik}. This thesis collects many ideas from the posts to create a detailed and complete description of the results to be read beyond the bounds of the blogs that are closed to the public. I thank them for their strong support in helping me overcome many of the struggles I had faced inside this thesis.

\section{The Aim of this Thesis}

\quad In this paper we will explore the distribution of the zeros counted with sign of smoothed sections on 2 dimensional discrete surfaces in $\mathbb{R}^3$. We apply the smoothing by the smoothing operator $\mbox{exp}(t\Delta), \ $($t \in \mathbb{R}, \ \Delta$ = Laplace operator) on random sections and aim to compute the expected signed sum of zeros.

In the introduction we will cover up some basic definitions on the underlying structure that we build our statements upon. Then we will have a close look at all the properties that the Laplace operator $\Delta$ has in order to justify why it can be used for sections smoothing (chap: \ref{chapter:Laplace Operator}). After that we examine how we can compute the desired density as we seek and justify the resulting formulas in detail with two different approaches: once with discrete computations (chap: \ref{chap:Discrete Index Approach}) and once using integral geometry (chap: \ref{chapter:Integral Geometry Approach}). At the very end we will combine the two approaches to archive a solid answer on how to compute what we looked for while visualizing the result (chap: \ref{chap:Solution and Discussion}).

\section{Simplicial Complexes and Hermitian Line Bundles}

\quad We will focus on 2 dimensional, closed and connected simplicial complexes $M$ in the 3 dimensional euclidean space $ \R^{3} $. Throughout this thesis we will remain discrete with $ n  \in \mathbb{N} $ vertex points $\{1,...,n\}$ with positions $ v_1 , ... , v_n \in \R^3 $ making up the surface.

\begin{definition}\label{Simplicial Complexes}
Let $V \subseteq \mathbb{R}^{3}$  a  set  of $n  \in \mathbb{N}$ vertices and $\Sigma \subseteq \mbox{P}(V)$ (the power set $P(V)$). Let $\#S :=$ number of elements in the set $S$. Then $(V, \Sigma )$ is called a 2 dimensional simplicial complex if:

\begin{itemize}

\item \textbf{i) } if $S \in \Sigma \ \ \Rightarrow
 \ \ \#S\leq 3$ and if $ A \subseteq S \Rightarrow A \in \Sigma$. We call $S$ a simplex.

\item \textbf{ii) } $\bigcup_{S \in \Sigma} S = V$.

\end{itemize}

\end{definition}

We Can imagine these simplices $S \in \Sigma$ to be an instruction describing which vertices to consider as connected. These are then used to define what points, edges and faces (triangles) are part of our simplicial complex. So if ...

\begin{itemize}
\item ... $ \#S = 1 \Rightarrow $ It is just a point representation of the vertex. By property 2, every vertex $v \in V$ appears in some element of $\Sigma$.

\item ... $ \#S = 2 \Rightarrow $ there exists an edge between the two vertices of S. For each $\{i,j\}\in \Sigma$ there are edges $e_{ij}$ and $e_{ji}$ that point in opposite directions.

\item ... $ \#S = 3 \Rightarrow $ there exists a triangle between the three vertices of S. By propertry \textit{i)} the edges and points around a face are always part of the simplicial complex.
\end{itemize}

We could have defined simplicial complexes for arbitrary dimensions $k \in \mathbb{N}$ but it is now best to focus only on what we need. See figure (\ref{fig:simplicial complex}) for an example. A simplicial complex is called \emph{closed} if every edge is part of exactly two faces.

\begin{figure}
\centering

\begin{tikzpicture}

  \coordinate [label={below right:$1$}] (1) at (0, 0);
  \coordinate [label={above right:$2$}] (2) at (2cm, 3cm);
  \coordinate [label={below left:$3$}] (3) at (-2.5cm, 1.6cm);
  \coordinate [label={above left:$4$}] (4) at (-2.3cm, +2.8cm);
  \coordinate [label={below right:$5$}] (5) at (2.3cm, +0.8cm);

  \draw [fill=lightgray!50,opacity=0.25]  (1.center) --  (2.center) --  (4.center) -- (3.center) -- cycle;
  
  \draw [very thick] (1) -- (5) -- (2) -- (4) -- (3) -- (1) -- (2) -- (3);

\end{tikzpicture}

\caption{Example of simplicial complex $(V,\Sigma)$ with 4 vertices, 7 edges and 2 faces.}
\label{fig:simplicial complex}
\end{figure}
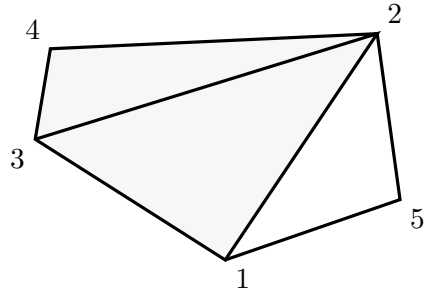

\medskip

\quad Next up are the sections on surfaces, which require careful attention in their definition. We might as well have focused on tangent vector fields. However, dealing with sections is worth it to archive a more general results.

We need to define line bundles on our simplicial complex $M:= (V,\Sigma)$ since in this context we want the simplicial complex to be an approximation of some smooth surface and thus require additional information such as a connection. 

\begin{definition}[Discrete hermitian line bundle]\label{discrete hermitian line bundle}

A discrete hermitian line bundle $L$ over a simplicial complex $M$  is a map $\pi : L \longrightarrow V$ such that $L_i:= \pi^{-1}({i})$  has the structure of a  1-dimensional hermitian vector space for each vertex $i \in V$.

\end{definition}

When saying \emph{hermitian line bundle $L$ over $M$} we implicitly have some $\pi$. At the moment there is no way to compare vectors between $L_i$ and $L_j$ as we have no notion of a transport in between neighbouring vertices. So the line bundle itself in the discrete case requires additional information or help to be more successfully understood as an approximation of a smooth surface. The following definition provides this extra help in form of mappings.

\begin{definition}[Discrete hermitian line bundle with curvature]\label{discrete hermitian line bundle with curvature}

A discrete hermitian line bundle with curvature over a simplicial complex $M$ is a triple $(L,\eta,\Omega)$ where $L$ is a hermitian line bundle over $M$. $\eta$ is a discrete connection mapping for each edge $e_{ij}$ to a unitary map $\eta_{ij}:L_i\longrightarrow L_j$ with $\eta_{ji} = \eta_{ij}^{-1}$. $\Omega$ is a closed real-valued 2-form such that on each face $ijk$ of the simplicial complex we have:

$$
\eta_{ki}\circ \eta_{jk}\circ \eta_{ij} = e^{i\Omega_{ijk} } \mbox{id}
$$

\end{definition}

The above definition is inspired by it's smooth equivalent where a connection $\nabla$ is used to define unique unitary parallel transports $\eta_{\gamma}$ on a path $\gamma$ \cite{PK:15:CLB} which is why we name $\eta$ a \emph{discrete connection}. The problem with the discrete connection is that the effect made by its transport $\eta_{ki}\circ \eta_{jk}\circ \eta_{ij}$ around the face $ijk$ is only observable up to mod $2\pi$. In other words, when transporting $x\in L_i$ using $\eta_{ki}\circ \eta_{jk}\circ \eta_{ij}$, we can not see how often the smooth equivalent would have been winding around the face $ijk$.

The solution to this problem is the additional information given by $\Omega$. $\Omega_{ijk}$ is the final rotation angle around a triangle and can be interpreted as a form of curvature. Without it the transport $\eta_{ki}\circ \eta_{jk}\circ \eta_{ij}$ would trap the rotation to [$-\pi,\pi$]\label{trapped curvature}. In our later calculations we will take $\Omega$ as an extra input to have a more precise solution (chapter: \ref{chap:Discrete Index Approach} \& \ref{chapter:Integral Geometry Approach}). 

If we are only given a discrete geometry (only a simplicial complex $M$) without any connection $\eta$ or curvature $\Omega$ we can still define a trivial one based on weak assumptions. In this case we will first define a new $\eta$ just as constructed in \cite{Knoppel:2013:GOD} (section 6). Then we define $\Omega_{ijk}\in (-\pi,\pi)$ to be as such that $\eta_{ki}\circ \eta_{jk}\circ \eta_{ij} = e^{i\Omega_{ijk} } id$. This causes the problem that $\Omega$ is trapped in an interval but this should not be surprising given the nature of starting with a finite set of points to approximate infinitely many where naturally information does get lost. The finer the triangulation the less relevant to curvature limitations will become since smaller triangles have less curvature anyway.

We avoid the case where $\Omega=\pm\pi$ is as it is confusing and meaningless as it happens extremly rarely and will pose no problem in this thesis.

The line bundle that we will handle is inspired by the tangent bundle $\mbox{T}M$. So $L_i = \mbox{T}_{i}M$ naturally has the structure of a two dimensional euclidean plane, but we will identify it using $\mathbb{C}$ and from now on work with $\mathbb{C}$. In terms of projective geometry we regard $\mathbb{C}$ as a \emph{line}, which is why we name $L$ an hermitian line bundle. 

\section{Sections and Projections}
Let us now be precise of what our \emph{vector fields} generalization for line bundles are.
\begin{definition}[Section]
A section $\phi$ over the hermitian line bundle $L$ is a map

$$
\phi : V \longrightarrow L \ ,\ i \mapsto \phi_i \in L_i
$$

We define $\Gamma(L)$ as the set of all sections over $L$.

\end{definition}

\label{base section} We can now choose a normalized and non-vanishing section $X$ and write any other section $\varphi \in \Gamma(L)$ as $\varphi_i = z_iX_i, \ z_i \in \mathbb{C}$ and call this $X$ our \emph{base section}. We can define any sections in $\Gamma(L)$ by selecting $z = (z_1, ... , z_n) \in \mathbb{C}^n$ \label{duality} and defining a new section $\varphi_i = z_iX_i$. Hence we can regard sections of finite simplicial complexes as elements of $\mathbb{C}^n$ and thus work with matrices when dealing with linear operators on sections. So with a choice of a normalized and non-vanishing $X \in \Gamma(L)$ we have an bijective identification between hermitian line bundle sections $\Gamma(L)$ and $\mathbb{C}^n$. This choice of base section is important for future calculations.

To stay true to the smooth analog of parallel transports we define a section $\phi \in \Gamma(L)$ to be \emph{parallel} if for the underlying line bundle we have $\eta_{ij}(\phi_i) = \phi_j$ \label{parallel}. The name parallel will become evident when we later define the rotation form (Def: \ref{rotation form}). A \textit{parallel transport} \cite{PK:15:CLB}  along a path of edges $\gamma := ij, \ jk, \ ... , \ hl$ is induced by the underlying connection $\eta$ and is defined as 

$$
P_{\gamma} : L_i \longrightarrow L_l , \ P_{\gamma}=\eta_{hl} \circ ... \circ \eta_{jk} \circ \eta_{ij}
$$

We also need to define the scalar product that we are going to use for the vector space of sections $\Gamma(L)$.

\begin{definition}\label{hermitian inner product}
Let $(L,\eta , \Omega)$ be a hermitian line bundle. For $\phi, \varphi \in \Gamma(L)$ we define a hermitian inner product based on the geometry of the simplicial complex.

$$
\langle \! \langle \ . \ , \ . \ \rangle \! \rangle : \Gamma(L) \times \Gamma(L) \longrightarrow \mathbb{C}
$$

$$
\langle \! \langle \phi  , \varphi \rangle \! \rangle = \sum_{i=1}^n A_i\langle\phi_i,\varphi_i\rangle_i
$$

where $A_i$ is the area of the polygonal surface around the vertex reaching the midpoints of the surrounding triangles and $\langle \ .\ , \ .\ \rangle_i$ is the scalar product on $L_i$.

\end{definition}

Lets us be clear about what we mean by \textit{zeros of vector fields with multiplicity} of sections $\varphi \in \Gamma(L)$. A normal zero is just a point $i \in V$ such that $\varphi_i = 0$. In the smooth case, the multiplicity of the zero is related to the behaviour of the vector field in a local neighbourhood of $v$ on the surface $M$. In the discrete case we can't look at a local neighbourhood of a point and we also cannot explicitly talk about zeros away from the vertices since we are only given information on the vertices. Hence we will approximate or make reasonable guesses using a fine triangulation and theorems relating the values on the vertices and the parallel transport to the zeros inside each face (chap: \ref{chap:Discrete Index Approach}),(chap: \ref{chapter:Integral Geometry Approach}).

Too more evidently comprehend what a zero of a vector field looks like we take look at figure (\ref{fig:Zebra_pattern}). We know that any stripe pattern of a closed surface has at least one zero (harry-ball theorem) and that we have a less trivial behaviour for the stripes at these zeros. The index of a zeros is the winding number of the directions in the local neighbourhood of the zeros.

\begin{figure}
\centering
\includegraphics[width=9cm]{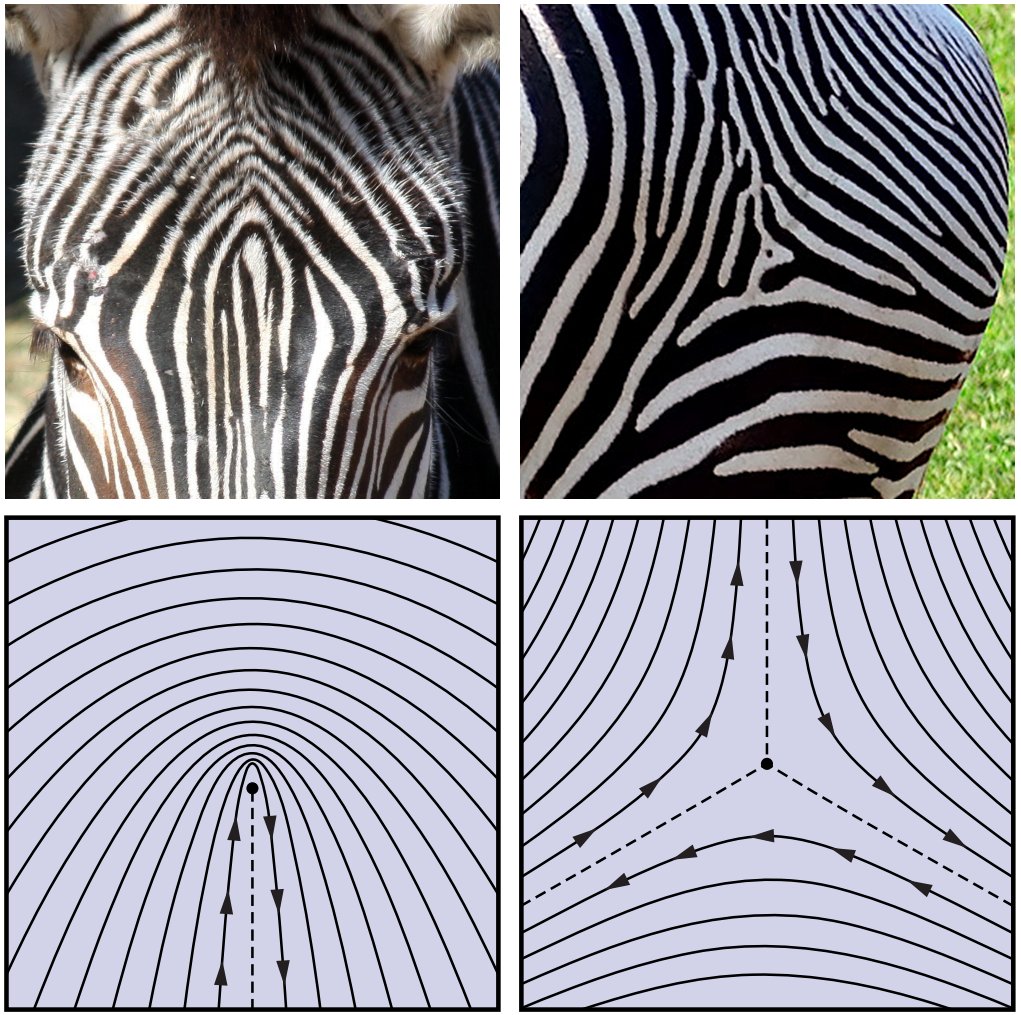}
\caption{Real life comparison of a Zebra's stripe pattern and a zero in a directional field \cite{Knoppel:2015:SPS}. (Photos courtesy Trisha Shears and André Karwath)}
\label{fig:Zebra_pattern}
\end{figure}

\quad The goal is to observe the distribution of the zeros of sections. So given a section $\phi \in \Gamma(L) \cong \mathbb{C}^n$ we can multiply that section by any $\lambda \in \mathbb{C}^*:=\mathbb{C}$ \textbackslash $\{0\}$ (point wise) so that the resulting section, defined as $\lambda \phi$, still has the exact same zeros with the same multiplicity and sign. This means that sections on hermitian line bundles that are non-zero multiples of each other can be regarded as part of the same equivalence class regarding the location, multiplicity and sign of their zeros, which is why projective geometry becomes relevant\label{projective argument}. Visually, multiplying a section $\phi$ by $\lambda \neq 0$ scales and rotates all ``vectors`` (the values $\phi_i$) by the same amount inside their respective spaces ($L_i$).

\begin{definition}[Projection map]\label{projection map}
If $z \in \mathbb{C}^n\setminus\{0\}$ for some $n \in \mathbb{N}$, then we define $[z]:=\{ \lambda X \in \mathbb{C}^n | \lambda \in \mathbb{C} \}$. This is the unique one dimensional subspace containing in $\mathbb{C}^n$ containing $z$. For future use we define the map

$$
[ \ \ ] : \Gamma(L)\setminus\{0\} \longrightarrow \mathbb{CP}^{n-1}
$$

$$ \phi \mapsto [\phi]$$

\end{definition}

$\mathbb{CP}^{n-1}$ is a orientable Riemannian manifold with the Fubini-Study metric\label{Fubini-Study metric} induced by the euclidean metric when relating $\mathbb{CP}^{n-1}$ to $S^{2n-1}$. Let $[x], [y] \in \mathbb{CP}^{n-1}$, then the $\mathbb{CP}^{n-1}$ distance between $[x],[y]$ is given by

\begin{equation}\label{eq:Fubini-Study metric}
d_F(x,y)= \mbox{arcos} \left ( \dfrac{\abs*{\langle \phi,\varphi \rangle}}{\abs*{\phi}\abs*{\varphi}} \right )
\end{equation}

with $\langle \ . \ , \ . \ \rangle$ being the standard hermitian scalar product of $\mathbb{C}^n$.Note how the maximum distance can only be $\frac{\pi}{2}$ and the area of $\mathbb{CP}^{n-1}$ is finite too \cite{Howard93thekinematic}.

\chapter{Laplace Operator \& Smoothing}\label{chapter:Laplace Operator}

\quad In this chapter we will closely examine the Laplace operator $\Delta$ with facts that are essential for the main results presented in this thesis. We will formally discuss $\Delta$'s properties and discuss its use in the smoothing operator (def: \ref{smoothing operator}).

\section{Laplace Operator}\label{sec:Laplace Operator}

\quad The Laplace operator $\Delta$ is an important linear function that appears in many equations in physics such as in heat and fluid flow, in quantum mechanics, in electrostatics, in gravitational potentials and in much more. In differential geometry it's most general form is called the Laplace-Beltrami operator defined by $f \mapsto \mbox{trace(Hessian(}f))$.

However, we require a different definition for our discrete constructions. Due to the fact that in the discrete case we have limited information of the smooth surface that our discrete surface tries to represent the discrete Laplace operator can also only be an approximation of the smooth Laplace operator. There exist multiple different possible approximations and none of them fulfil all core properties of the smooth Laplacian \cite{DLO:2007}. 

Nevertheless, our favorite Laplace operator is the cotan-Laplace operator as it was defined in \cite{pinkall1993computing}. For a simplicial complex $M=(V,\Sigma)$ this matrix $\Delta\in\mathbb{C}^{n\times n}$ is defined as

\[
\Delta_{ij}= 
\begin{cases}
0 \ , i\neq j, \ \{i,j\} \notin \Sigma\\[3mm]
\tfrac{1}{2}(\mbox{cot}(\alpha_{ij}) + \mbox{cot}(\beta_{ij}))\ , i\neq j, \ \{i,j\} \in \Sigma\\[3mm]
-\sum_{k\neq 1}\Delta_{ik} \ , \ i=j
\end{cases}
\]

using the angles in $\alpha_{ij},\beta_{ij}$ as shown in the figure (\ref{fig:cotan_laplace}).

\begin{figure}
\centering

\newcommand{\pythagwidth}{3cm}
\newcommand{\pythagheight}{2cm}
\begin{tikzpicture}

  \coordinate [label={below right:$l$}] (A) at (0, 0);
  \coordinate [label={above right:$j$}] (B) at (0, \pythagheight);
  \coordinate [label={below left:$i$}] (C) at (-\pythagwidth, 0);
  \coordinate [label={above left:$k$}] (D) at (-2.3cm, +2.8cm);

  \coordinate (D1) at (-\pythagheight, \pythagheight + \pythagwidth);
  \coordinate (D2) at (-\pythagheight - \pythagwidth, \pythagwidth);

\draw [fill=lightgray!25,opacity=0.25]  (A.center) --  (C.center) --  (D.center) -- (B.center) -- cycle;

  \draw [very thick] (C) -- (B) -- (A) -- (C) -- (D) -- (B);

    \draw (0,0.5cm) arc (90:180:0.5);
    \draw (-1.8cm, +2.6cm) arc (340:255:0.5) ;
    \node[] at (-1.8cm, +2.0cm)  {$\alpha_{ij}$};
    \node[] at (-0.6cm,0.6cm)  {$\beta_{ij}$};

\end{tikzpicture}

\caption{A part of a simplicial complex with labels needed to calculate the cotan-Laplace operator entry $\Delta_{ij}$.}
\label{fig:cotan_laplace}
\end{figure}
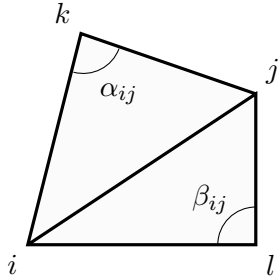

\begin{itemize}

\item $\Delta$ is linear over the field $\mathbb{C}$. This means that we can view $\Delta$ as a Matrix according to a non-vanishing base section $X\in\Gamma(L)$.

\item $\Delta$ is hermitian. $\Delta^*=\Delta$. ( $*=$ conjugate transpose ) Hence by the spectral theorem \cite{Hawkins19751} we know that all eigenvalues of $\Delta$ must be real and that we can always find a basis of orthogonal sections given the scalar product we defined.


\item $\Delta$ is positive semi definite, which is an equivalent statement as to say that all eigenvalues are greater or equal than zero. Together with the above statement this allows us to sort the eigenvalues and eigenvectors respectively. From here on we will index these as:

$\lambda_i \in \mathbb{R}_+\cup\{0\}$ is the eigenvalue to the eigenvector $v_i \in \mathbb{C}^n$ for $\forall i \in \mathbb{N}, \  i \leq n $ such that \label{eigenvalue ordering}

$$ \lambda_n \geq ... \geq \lambda_1 \geq 0$$


\end{itemize}

\subsection{Section Laplace Operator}\label{subsec: section laplace}

\quad In this thesis we will focus on the Laplace operator as defined in the related \emph{Globally Optimal Direction Fields} paper \cite{Knoppel:2013:GOD}(sec. 6.1.1). We omit repeating the construction details as they are too complex and are explained in detail in the paper.

This section Laplace operator is the adaptation of the cotan-Laplace operator as we defined in the section above to be used on sections. If the line bundle $L$ is trivial (i.e. $L_i\equiv\mathbb{C}$) then both, the cotan-Laplace operator and the adaption to line bundles are the same.

From now on throughout this thesis we will only use the adapted Laplace operator for the sections in $\Gamma(L)$ from a discrete hermitian line bundle $L$. In section (\ref{duality}) we had mentioned how $\Gamma(L)$ is identifiable to $\mathbb{C}^n$. For that reason let us now speak of the Laplace operator as a function $\Delta : \mathbb{C}^n \longrightarrow \mathbb{C}^n$ rather than using $\Delta : \Gamma(L) \longrightarrow \Gamma(L)$ after having chosen a non-vanishing base section $X\in\Gamma(L)$.

Additionally the adapted Laplacian from the paper does not have zero as an eigenvalue. This means that our ordering from above becomes:

$$ \lambda_n \geq ... \geq \lambda_1 > 0$$

See the comment about \emph{constant sections} in section (\ref{sec:Smoothing Operator}) to understand why.

\section{Smoothing Operator}\label{sec:Smoothing Operator}

\quad Now that we have established what we need of the Laplace operator, we can take a look at how it can be used for smoothing. For this reason, lets take a look at the heat equation \cite{Hahn:2012}.

Let $\varphi_t : I \times C(M) \longrightarrow C(M)$ be a smooth function of a surface $M$ and let $I$ be an open connected interval with $0 \in I \subset \mathbb{R}$. If $\Delta_s$ is the smooth Laplace-operator we say that $\varphi$ solves the heat equation if and only if 

$$\Delta_s \varphi_t = -\partial_t\varphi, \forall t\in I$$ 

For our discrete purpose we modify this to $\varphi_t:I\times \mathbb{C}^n\Rightarrow \mathbb{C}^n$ and use the discrete Laplace operator. The $i$-th entry in $\mathbb{C}^n$ represents value on the $i$-th vertex.

There are many solutions to it the way we defined it, but it gets interesting as soon as we select the initial conditions $\varphi_0 \in \mathbb{C}^n$ to be fixed. Physically, when using real scalar values on $M$, the solutions for real values describe the flow of heat on the given $2$ dimensional surface over time given the initial heat distribution $\varphi_0$ (fig: \ref{fig:heat flow}). The Laplace operator distributes values between two neighbouring vertices in proportion to the weight of the edge and the gradient of the values along the edge. Using complex instead of real values does not change the fact that these values become more evenly distributed each time when applying the Laplace operator.

\emph{Distribution values in proportion to the weight of the edge} works fine with scalars, but when using sections this becomes non-trivial. With an arbitrary selection a non-vanishing base section $X\in\Gamma(L)$ we need the transports $\eta_{ij}$ to compare $z_iX_i \in L_i$ with $z_jX_j\in L_j$ if $i$ and $j$ are two neighboring vertices. With enough smoothing, $\eta_{ij}(z_iX_i)$ becomes closer to $z_jX_j$ even though $z_i$ and $z_j$ can be completely different.

The important fact is that using the Laplace operator that we chose in section (\ref{subsec: section laplace}) we end up doing the same smoothing as a \emph{normal} Laplace operator would do. Section values become more evenly distributed where their edges have more weight dependant on the geometry, thus performing what we interpret as \emph{smoothing} of sections. The eigenvalues do not change when choosing a base section $X$ because that is equivalent to finding a different representation matrix according to a different base. Another difference is that there is no \emph{constant section} that is analogous to the constant solution for the scalar values on $M$ since parallel transports back to a vertex itself are not identity mappings. This is essentially why no eigenvalue is 0.

\begin{figure}
\centering
\includegraphics[width=4cm]{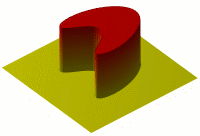}
\includegraphics[width=4cm]{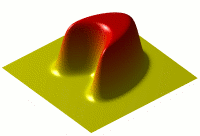}
\includegraphics[width=4cm]{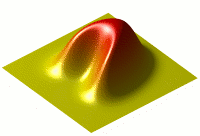}
\caption{Heat flow example on planar surface. Height represents the heat. Pictures from \cite{heatFlowGif:2007}}
\label{fig:heat flow}
\end{figure}

Note how $\varphi$ itself does not make any reference to the underlying geometry, yet every geometry has distinct solutions. For every different surface (simplicial complex) we acquire a different Laplace operator $\Delta$ which determines how the geometry affects the flow.

This crates a linear differential equation system. Similarly to the solution of the differential equation $$\lambda f(t)=\partial_tf(t), \ f(0)=A \ \ \  \text{  being  } \ \ \  f(t)=e^{-\lambda t}A \in \mathbb{C}$$ we know that $\varphi_t = e^{-t\Delta}\varphi_0$ solves the heat equation:

\begin{equation}
\begin{split}
\partial_t \left(e^{-t\Delta}\varphi_0 \right) & =\partial_t \left(\sum_{j=0}^{\infty}\frac{(-t)^j\Delta^j}{j!}\varphi_0\right)\\
 & = \sum_{j=0}^{\infty}\frac{-j(-t)^{j-1}\Delta^j\varphi_0}{j!}\\
 & = -\sum_{j=1}^{\infty}\frac{(-t)^{j-1}\Delta\Delta^{j-1}\varphi_0}{(j-1)!}\\
 & = -\Delta \sum_{j=0}^{\infty}\frac{(-t)^{j}\Delta^j}{j!}\varphi_0\\
 & = -\Delta \left( e^{-t\Delta}\varphi_0\right)\\
\text{and } e^{-0\Delta}\varphi_0 & = \varphi_0
\end{split}
\end{equation}

where $\Delta^j$ is defined as the $j$th successive application of the Laplace operator. Sadly, $e^{-t\Delta}$ is not easy compute explicitly, yet essential for this thesis. Let use therefor define:

\begin{definition}[Smoothing Operator]\label{smoothing operator}

Let $L$ be a hermitian line bundle on the simplicial complex $M$. $\forall t\in \mathbb{R}$ the smoothing operator $S_t : \Gamma(L) \longrightarrow \Gamma(L)$ is defined as:

$$ \varphi \mapsto S_t\varphi := e^{-t\Delta}\varphi = \sum_{j=0}^{\infty}\frac{(-t)^j\Delta^j}{j!}\varphi 
$$

\end{definition}

Intuitively, we can view $S_t$ as an operator that applies $\Delta$ a continuous amount of times to a section $\varphi$. For $t=0$ we apply $\Delta$ zero times and as $t$ increases so does the influence of higher powers of $\Delta$ in the operator as seen in equation (\ref{eq: smoothing powers}).

\begin{equation}\label{eq: smoothing powers}
e^{-t\Delta}\varphi_0 = \overbrace{\varphi_0}^\text{original field} + \overbrace{-t\Delta\varphi_0}^\text{1st order} + \overbrace{\dfrac{t^2}{2!}\Delta^2\varphi_0}^\text{2nd order} + \overbrace{\dfrac{-t^3}{3!}\Delta^3\varphi_0}^\text{3rd order} + \ ...
\end{equation}

Note that $S_t$ is as a composition of linear operators also linear. Now let us take a close look on how $S_t$ acts on the eigenvectos $\varphi_i$ of $\Delta$.

$$ S_t\varphi_i = \sum_{j=0}^{\infty}\frac{(-t)^j\Delta^j}{j!}\varphi_i = \sum_{j=0}^{\infty}\frac{(-t)^j\lambda_i^j}{j!}\varphi_i = \left(\sum_{j=0}^{\infty}\frac{(-t)^j\lambda_i^j}{j!}\right)\varphi_i = e^{-t\lambda_i}\varphi_i $$

 So $S_t$ has eigenvectors $\varphi_i$ to eigenvalues $e^{-t\lambda_i}$. Additionally $S_t$ is hermitian as seen here:

$$ S_t^* = \left(\sum_{j=0}^{\infty}\frac{(-t)^j\Delta^j}{j!}\right)^* = \sum_{j=0}^{\infty}\frac{(-t)^j\left(\Delta^j \right)^*}{j!} = \sum_{j=0}^{\infty}\frac{(-t)^j\left(\Delta^* \right) ^j}{j!} = \sum_{j=0}^{\infty}\frac{(-t)^j\Delta ^j}{j!} = S_t$$

By the spectral theorem again \cite{Hawkins19751} this means that $\{\varphi_i\}_{i=1,...,n}$ form an eigenvector basis to the eigen values $e^{-t\lambda_i}$ of the operator $S_t$. We write $\varphi \in \Gamma(L)$ as a composition of eigenvectors

$$ \varphi = \sum_{i=1}^{\infty}\mu_i\varphi_i, \ \mu_i \in \mathbb{C} $$

Thus we deduce:

\begin{equation}\label{eq: smoothing eigenvectorwise}
 S_t\varphi = S_t\left(\sum_{i=1}^{\infty}\mu_i\varphi_i\right) = \sum_{i=1}^{\infty}\mu_iS_t\varphi_i\\ = \sum_{i=1}^{\infty}\mu_ie^{-t\lambda_i}\varphi_i
\end{equation}

The expression (\ref{eq: smoothing eigenvectorwise}) allows us to work with $S_t$ without having to compute the powers of $\Delta$. Further above we had mentioned how for our purpose of finding the density of the indexed zeros, the non zero multiples of the sections becomes irrelevant (sec: \ref{projective argument}). So under the assumption that the above linear combination is well defined in terms of convergence we can always select a different representative section by normalizing.

\section{Smoothing Applications}

\quad Why do we want to smooth anything anyway? In the following section we will try to explain why it is useful besides computing the flow of heat. Lets explore the relationship between the $\Delta$ and the discrete Dirichlet energy.

In our discrete setting we can express the Dirichlet energy of a discrete section $\phi\in\Gamma(L)$ of a hermitian line bundle $L$ over a simplicial complex $M$ as

$$ E_D(\phi) := \langle \! \langle \Delta \phi , \phi \rangle \! \rangle $$

And by writing $\phi$ as a linear combination of eigenvectors of $\Delta$ we get

$$ E_D(\phi) = \sum_{i=1}^{\infty}|\mu_i|^2(\lambda_i) $$

To minimize this energy with the condition that $\langle \phi , \phi \rangle = 1$ it suffices to choose $|\mu_i| = 1$ for all $i$ such that $\lambda_i$ is the smallest eigenvalue and $\mu_i=0$ otherwise since $\lambda_i$ is always positive and ordered (sec: \ref{eigenvalue ordering}). So the minimizers of the Dirichlet energy under normalized condition all lie in the eigenspace of the smallest eigenvalue.

The most important source of inspiration for this work is the paper by Ulrich Pinkall, Felix Kn\"opel, Keena Crane and Peter Schr\"oder on globally optimized direction fields \cite{Knoppel:2013:GOD}. Figure (\ref{smooth bunny}) shows the output of the directional field algorithm when applied to a section on the bunny with highlighted zeros. The directional field on the bunny exemplifies one smoothed unit vector field.


\begin{figure}
\centering
\includegraphics[width=10cm]{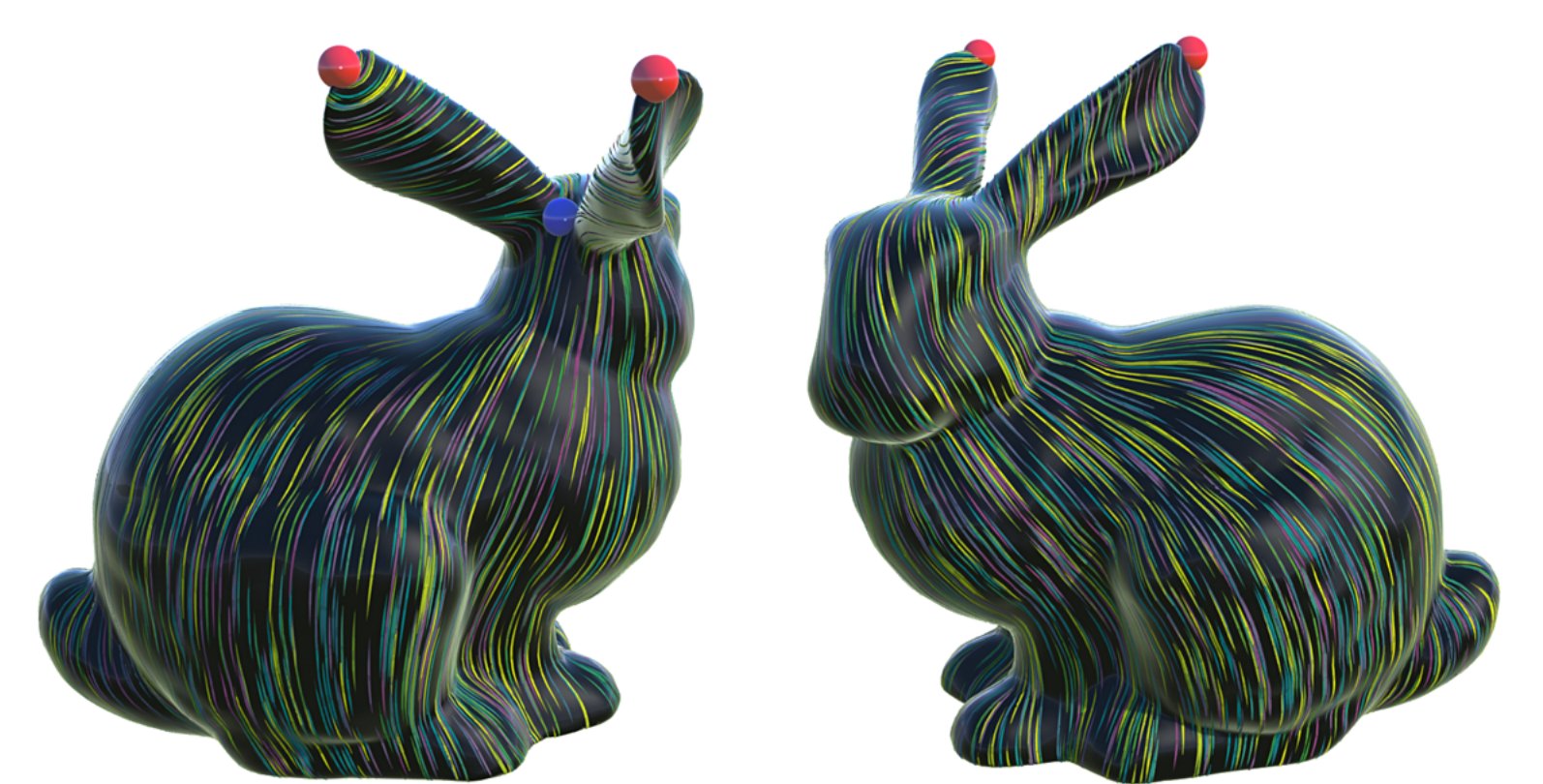}
\caption{globally optimized direction field example on a bunny. Colored dots indicate the index of a zero. \cite{Knoppel:2013:GOD}}
\label{smooth bunny}
\end{figure}

This image of the Bunny and several more examples allude that the zeros of these smoothed sections (i.e. the sources and sinks of the lines) gather in a non random fashion when smoothing random initial vector fields. However, instead of using the smoothing operator $S_t$, these examples where generated using a simpler computational approach by numerically solving a linear equations. This is called the \emph{inverse power method} and can be applied iteratively for advance smoothing. It aims at finding the eigenvector of the smallest (absolute) eigenvalue. These smoothed directional fields can be used to create nice stripe patterns and better texturzation (figure \ref{fig:smooth_texture}). For more information see \cite{Knoppel:2015:SPS}.

\begin{figure}
\centering
\includegraphics[width=9cm]{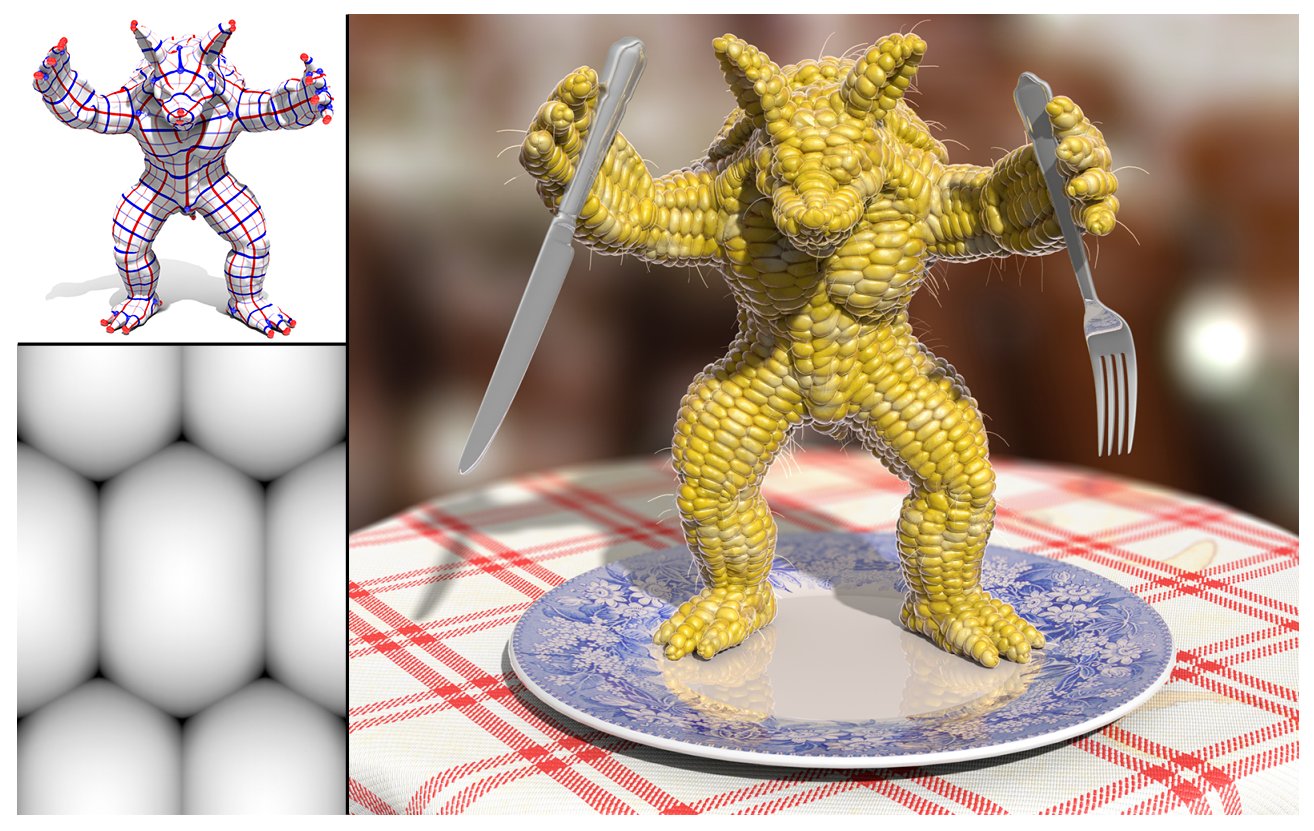}
\caption{Example application of smoothed sections \cite{Knoppel:2015:SPS}}
\label{fig:smooth_texture}
\end{figure}

 Another look at the Zebra figure (\ref{fig:Zebra_pattern}) sparks curiosity about their locations after smoothing. To generate a nice stripe pattern from a smoothed directional field we would prefer these zeros to be in less obvious locations. Let us find out where they are heading by the end of this thesis.

\section{Smoothing Convergence}

For large values of $t$ we can establish the following

\begin{theorem}[Smoothing convergence]\label{smoothing convergence}
 
$$
\text{If } \ \ \varphi = \sum_{i=1}^{\infty}\mu_i\varphi_i \neq 0
$$
for some $\mu_i \in \mathbb{C}$. Define $i_{\min} := \min\{ i \in \mathbb{N} \ | \ \mu_i \neq 0\}$. then we have

$$
\lim\limits_{t \to \infty} [S_t\varphi] = \left [ \left ( \sum_{i=i_{\min}}^{i_{\min}+\\dim(Eig(\lambda_{i_{\min}}))-1} \mu_i\varphi_i \right ) \right ]
$$

where $[ \ ]$ is the projection map [defined in sec. \ref{projection map}].

\end{theorem}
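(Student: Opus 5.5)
We will use the eigenvector expansion \eqref{eq: smoothing eigenvectorwise} together with the fact that the projection map $[\ ]$ is insensitive to nonzero scalar multiples. The sum is really finite, running over $i=1,\dots,n$, and we assume the eigenvalues are sorted increasingly as in section \ref{eigenvalue ordering}. Set $\lambda^* := \lambda_{i_{\min}}$ and $m := \dim(Eig(\lambda^*))$.

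To fix the index range, we first choose the eigenbasis so that each eigenspace occupies a block of consecutive indices. Then the indices $i \geq i_{\min}$ with $\lambda_i = \lambda^*$ are exactly the $i$ with $i_{\min}\le i \le i_{\min}+m-1$. This holds because $\mu_i = 0$ for $i<i_{\min}$ and the ordering forces every other $i\geq i_{\min}$ to satisfy $\lambda_i>\lambda^*$. Strictly, the block of $\lambda^*$ could begin before $i_{\min}$, and then the upper index $i_{\min}+m-1$ would overshoot. In that case the extra terms carry eigenvalues $>\lambda^*$ and would contaminate the limit. We therefore interpret the upper limit as the last index of the $\lambda^*$-block, and remark on this. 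Define $\psi := \sum_{\lambda_i=\lambda^*}\mu_i\varphi_i$. Since the $\varphi_i$ are linearly independent and $\mu_{i_{\min}}\neq 0$, we have $\psi \neq 0$, so $[\psi]$ is defined.

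The key step is to multiply by the nonzero scalar $e^{t\lambda^*}$, which leaves the class $[S_t\varphi]$ unchanged. This gives
\[
e^{t\lambda^*}S_t\varphi = \psi + \sum_{\lambda_i>\lambda^*}\mu_i e^{-t(\lambda_i-\lambda^*)}\varphi_i .
\]
Each term in the remaining finite sum tends to $0$ in $\mathbb{C}^n$ as $t\to\infty$. Hence $e^{t\lambda^*}S_t\varphi \to \psi \neq 0$, and in particular $S_t\varphi\neq 0$ for all $t$, since $S_t$ is invertible.

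It then remains to show that the projection $[\ ]:\mathbb{C}^n\setminus\{0\}\to\mathbb{CP}^{n-1}$ is continuous with respect to the Fubini–Study distance \eqref{eq:Fubini-Study metric}. This gives $[S_t\varphi] = [e^{t\lambda^*}S_t\varphi] \to [\psi]$. For continuity, if $x_t\to\psi\neq 0$, then $|\langle x_t,\psi\rangle|/(|x_t||\psi|)\to 1$, and since $\arccos$ is continuous, $d_F(x_t,\psi)\to 0$.

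The main obstacle is not analytic. It is making the statement's index bookkeeping precise, namely the placement of the eigenspace block relative to $i_{\min}$ discussed above. Once the limit is reformulated as the projection of the $\lambda^*$-eigencomponent $\psi$, the convergence argument is routine.
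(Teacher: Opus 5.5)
Your proof is correct and follows the paper's argument: you rescale $S_t\varphi$ by the nonzero factor $e^{t\lambda_{i_{\min}}}$, so that the coefficients $e^{-t(\lambda_i-\lambda_{i_{\min}})}$ of the higher eigencomponents tend to $0$ while those in the $\lambda_{i_{\min}}$-eigenspace stay fixed. You go beyond the paper in two ways. First, you justify moving the limit inside $[\ ]$ via continuity of the projection in the Fubini--Study distance, a step the paper takes without comment. Second, you correctly observe that the index range $i_{\min},\dots,i_{\min}+\dim(Eig(\lambda_{i_{\min}}))-1$, which the paper asserts ``due to the ordering'', is only right when the eigenspace block starts at $i_{\min}$ (true in the paper's application, where $i_{\min}=1$ almost surely); summing over the whole $\lambda_{i_{\min}}$-eigenspace is the right fix.
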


\begin{proof}[proof]

$S_t$ is hermitian and thus diagonalizable. Hence the algebraic multiplicity is equal to the geometric multiplicity for each eigenvalue. We know that $i_{\min}$ is well defined since otherwise $\varphi \equiv 0$ which is not the case. Therefore we can write

$$
S_t\varphi = \sum_{i=i_{\min}}^{\infty}\mu_ie^{-t\lambda_i}\varphi_i
$$

We can scale $\varphi$ by non zero complex numbers since we are only interested in the corresponding element in $\mathbb{CP}^{n-1}$. Therefore we define a scaled version of this linear combination 

$$
\psi_t := \dfrac{S_t\varphi}{e^{-t\lambda_{i_{\min}}}}= \sum_{i=i_{\min}}^{\infty}\mu_i e^{t(\lambda_{i_{\min}}-\lambda_i)}\varphi_i
$$

and compute the limits of the new linear factors

\[
\lim\limits_{t \to \infty} e^{t(\lambda_{i_{\min}}-\lambda_i)} = 
\begin{cases}
\infty \ , \ \lambda_{i_{\min}} - \lambda_i > 0\\[3mm]
1 \ , \ \lambda_{i_{\min}} - \lambda_i = 0\\[3mm]
0 \ , \ \lambda_{i_{\min}} - \lambda_i < 0
\end{cases}
\]

The ``$\infty$`` case can not occur since the eigenvalues are ordered (sec: \ref{eigenvalue ordering}) and $\lambda_{i_{\min}}$ is the smallest occurring eigenvalue. So if $t \rightarrow \infty$ only those vectors $\varphi_i$ become relevant that have $\lambda_i = \lambda_{i_{\min}}$. Due to the ordering of the eigenvectors those are exactly $\varphi_{i_{\min}}, \varphi_{i_{\min}+1}, ... , \varphi_{ i_{\min}+\dim(Eig(\lambda_{i_{\min}}))-1}$ which we use to conclude our proof:

\begin{equation}
\begin{split}
\lim\limits_{t \to \infty} [S_t\varphi] & = \lim\limits_{t \to \infty} [\psi_t]\\
& =  \left [ \lim\limits_{t \to \infty} \sum_{i=i_{\min}}^{\infty}\mu_i e^{t(\lambda_i-\lambda_{i_{\min}})}\varphi_i \right ]\\
& = \left [ \sum_{i=i_{\min}}^{i_{\min}+\dim(Eig(\lambda_{i_{\min}}))-1}\mu_i \varphi_i \right ]
\end{split}
\end{equation}

\end{proof}

This proposition lets us now deduce that for large $t$, the $\varphi_i$ components that have an eigenvalue equal to the smallest eigenvalue $\lambda_{i_{\min}}$ have much larger linear factors than any other eingevector, thus rendering every other eigenvector's influence obsolete. Since in this thesis $\varphi$ is randomly selected we have $\mu_i$ from continuous distribution, therefore it is safe to say that the probability that $\mu_1 = 0 $ is zero. This results in $i_{\min}=1$ and thus

$$
\lim\limits_{t \to \infty} [S_t\varphi] = \left [ \sum_{i=1}^{\dim(Eig(\lambda_{1}))} \mu_i\varphi_i \right ]
$$
 
The random section $\phi$ is mapped orthagonally onto $Eig(\lambda_1)$. Because of this the choice of orthonormal basis of eigenvectors is irrelevant in the final computation.

\chapter{Discrete Index Approach}\label{chap:Discrete Index Approach}

\quad In the previous chapter we established and discussed smoothing, now it is time to derive and deal with ways to compute the expected sum of signed zeros with multiplicity on discrete surfaces. In this chapter we will discuss one way of doing so using the rotation 1-form $\xi$ and face indexes.

On smooth surfaces, the Poincare-Hopf theorem relates the sum of indexes with a basic property of the surface. In this chapter we will expand this relationship between the geometry and its indexes for the discrete case in order to work with it on a per triangle basis. The discrete index theorem that we will derive here will explain how the number of zeros on a simplicial complex with a hermitian line bundle can be approximated without further information of the inside. We will also see the limits of this approach before continuing with a completely different one in the chapter afterwards. 


\section{Rotation Form and Indexes}

\begin{definition}[Discrete rotation form $\xi$]\label{rotation form}

Let $\phi \in \Gamma(L)$ a section on a simplicial complex $M$ and $ij$ and edge in $M$ and $\eta$ the connection of $L$. If $\eta_{ij}(\phi_i) \neq -\phi_j$ and $\phi_i\neq 0$ the discrete rotation 1-form $\xi$ is defined by

$$
\xi^{\phi}_{ij} = \arg \left (\dfrac{\phi_j}{\eta_{ij}(\phi_i)} \right ) \in (-\pi,\pi) 
$$

\end{definition}

This is discrete analog of the smooth rotation form $\xi^{\phi}:=\tfrac{\langle \nabla \phi,i\phi \rangle}{\langle \phi , \phi \rangle}$ where $\nabla$ is a connection. The rotation form measures the rotation caused by the unitary maps $\eta_{ij}$ along each edge, like when transporting a tangent vector from one tangent space $\mbox{T}_pM$ to another tangent space $\mbox{T}_qM$ when $p,q$ are elements of a smooth manifold $M$.

$\eta_{ij}(\phi_i) \neq -\phi_j$ and $\phi_i\neq 0$ is required to avoid problems with $\xi$ being undefined. For our later purpose this will not be a problem since the set of sections that fulfill to cause these problems form a null set meaning that their influence will vanish later when taking integrals. 

Also note how parallel sections defined in (sec. \ref{parallel}) influence the rotation form. If $\phi$ is a parallel section, then $\xi_{ij}^{\phi}=\arg\left(\frac{\phi_j}{\phi_j}\right)=\arg(1)=0$ for all edges $ij$ in $M$. Parallel sections do not rotate.

\begin{lemma}\label{rotation invariance}

The discrete rotation $\xi$ form is invariant under non-zero multiplications.

\end{lemma}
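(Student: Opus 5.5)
The plan is to compute $\xi^{\lambda\phi}_{ij}$ directly from Definition~\ref{rotation form} for an arbitrary $\lambda \in \mathbb{C}^*$ and show that it equals $\xi^{\phi}_{ij}$ on every edge $ij$. Throughout, $\lambda\phi$ means the pointwise product, $(\lambda\phi)_i = \lambda\phi_i \in L_i$.

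The first step is to check that both forms are defined on the same edges. Since $\lambda \neq 0$, we have $\lambda\phi_i = 0$ if and only if $\phi_i = 0$. The second condition needs the observation that $\eta_{ij}$, being a unitary map between $1$-dimensional hermitian spaces, is $\mathbb{C}$-linear. Hence $\eta_{ij}(\lambda\phi_i) = \lambda\,\eta_{ij}(\phi_i)$. It follows that $\eta_{ij}(\lambda\phi_i) = -\lambda\phi_j$ holds exactly when $\eta_{ij}(\phi_i) = -\phi_j$, after dividing by $\lambda$. So the exceptional edges of $\phi$ and $\lambda\phi$ coincide.

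The second step is the computation itself. On any edge where the form is defined,
\[
\xi^{\lambda\phi}_{ij} = \arg\left(\dfrac{\lambda\phi_j}{\eta_{ij}(\lambda\phi_i)}\right) = \arg\left(\dfrac{\lambda\phi_j}{\lambda\,\eta_{ij}(\phi_i)}\right) = \arg\left(\dfrac{\phi_j}{\eta_{ij}(\phi_i)}\right) = \xi^{\phi}_{ij}.
\]
The quotient of two vectors in the line $L_j$ is the unique complex number $w$ with $\phi_j = w\,\eta_{ij}(\phi_i)$. This number is unchanged when both vectors are multiplied by $\lambda$. Since both sides are taken with the same branch of $\arg$ in $(-\pi,\pi)$, the values agree exactly, not merely modulo $2\pi$.

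The only point needing care, and the main potential obstacle, is making the meaning of the quotient $\phi_j/\eta_{ij}(\phi_i)$ precise and justifying the $\mathbb{C}$-linearity of $\eta_{ij}$. I would handle this by fixing the base section $X$, writing $\phi_i = z_iX_i$, and noting that $\eta_{ij}(X_i) = u_{ij}X_j$ for some $|u_{ij}| = 1$. Then the quotient is $z_j/(u_{ij}z_i)$, which is visibly invariant under $z \mapsto \lambda z$. This also shows the result is independent of the choice of $X$, which makes the argument fully rigorous.
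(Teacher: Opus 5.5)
Your proof is correct and is the same direct computation as the paper's. You pull $\lambda$ through $\eta_{ij}$ by $\mathbb{C}$-linearity and cancel it inside the quotient before taking $\arg$; your extra checks (that $\phi$ and $\lambda\phi$ have the same exceptional edges, and that the quotient is well defined via the base section $X$) are details the paper leaves implicit.
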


\begin{proof}
If $(L,\eta,\Omega)$ is hermitian line bundle on a simplicial complex $M$, $ \phi \in \Gamma(L)$, $c \in \mathbb{C}\setminus\{0\}$. Let $ij$ be an edge in $M$, then 
$$\xi_{ij}^{c\phi} = \arg \left (\dfrac{c\phi_j}{\eta_{ij}(c\phi_i)} \right ) = \arg \left (\dfrac{c\phi_j}{c\eta_{ij}(\phi_i)} \right ) = \arg \left (\dfrac{\phi_j}{\eta_{ij}(\phi_i)} \right ) = \xi_{ij}^{\phi}$$

\end{proof}


We use this rotation form to define the discrete index, which is another analog to the smooth index as it appears in  \cite{milnor1997topology}.

\begin{definition}[Discrete index]

Let $(L,\eta,\Omega)$ be a discrete hermitian line bundle over the simplical complex $M$, $\phi \in \Gamma(L)$ and $ijk$ a triangle in $M$. We define the index 2-form of $\phi$ by 

$$
\mbox{ind}^{\phi}_{ijk} := \dfrac{1}{2\pi}(d\xi^{\phi}_{ijk}+\Omega_{ijk})
$$

where $d\xi^{\phi}_{ijk} = \xi^{\phi}_{ij} + \xi^{\phi}_{jk} + \xi^{\phi}_{ki}$

\end{definition}

The index describes the change in angle a section undergoes after being transported around a triangle while taking into consideration the curvature $\Omega_{ijk}$ of the face. In the smooth case if we would compute the index in the neighbourhood of an isolated zero it would return us the index of the zero. However, being in a discrete setting, a triangle is the smallest possible neighbourhood we can compute the winding number from. The rotation form $\xi$ can also not determine how many times $\eta_{ij}$ rotated $\phi_i$ along the edge $ij$ as it is bound to $(-\pi,\pi)$.

\begin{lemma}\label{lemma: index invariance}

If $(L,\eta,\Omega)$ is hermitian line bundle on a closed simplicial complex $M$, $ \phi \in \Gamma(L)$, $c \in \mathbb{C}\setminus\{0\}$. Let $ijk$ be a triangle in $M$. Then 
$\mbox{ind}^{c\phi}_{ijk}=\mbox{ind}^{\phi}_{ijk}\in\mathbb{Z}$ 

\end{lemma}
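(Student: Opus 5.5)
The statement has two parts, and we handle them separately. The invariance $\mbox{ind}^{c\phi}_{ijk}=\mbox{ind}^{\phi}_{ijk}$ is immediate from Lemma \ref{rotation invariance}. Since $\xi^{c\phi}_{ij}=\xi^{\phi}_{ij}$ on every edge, also $d\xi^{c\phi}_{ijk}=d\xi^{\phi}_{ijk}$. The curvature $\Omega_{ijk}$ belongs to the bundle and not to the section, so the two indices agree. Throughout we assume the genericity conditions of Definition \ref{rotation form}: $\phi_i,\phi_j,\phi_k\neq 0$ and no edge has $\eta(\phi_i)=-\phi_j$. This ensures $\xi$ is defined on all three edges.

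For integrality, we show that $e^{i(d\xi^{\phi}_{ijk}+\Omega_{ijk})}=1$. By definition of $\xi$ as an argument, we have
\[
e^{i\xi_{ij}}=\frac{\phi_j/|\phi_j|}{\eta_{ij}(\phi_i)/|\eta_{ij}(\phi_i)|}.
\]
Because $\eta_{ij}$ is unitary, $|\eta_{ij}(\phi_i)|=|\phi_i|$, so
\[
e^{i\xi_{ij}}\,\eta_{ij}(\phi_i)=\frac{|\phi_i|}{|\phi_j|}\,\phi_j .
\]
This means $\eta_{ij}$ followed by multiplication with $e^{i\xi_{ij}}$ sends $\phi_i$ to a positive multiple of $\phi_j$. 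The $\eta$'s are complex linear maps between one-dimensional spaces, so they commute with scalar multiplication. Composing the three edges therefore gives
\[
e^{i(\xi_{ij}+\xi_{jk}+\xi_{ki})}\,\eta_{ki}\circ\eta_{jk}\circ\eta_{ij}(\phi_i)=\frac{|\phi_i|}{|\phi_j|}\frac{|\phi_j|}{|\phi_k|}\frac{|\phi_k|}{|\phi_i|}\,\phi_i=\phi_i .
\]
By Definition \ref{discrete hermitian line bundle with curvature}, the composition equals $e^{i\Omega_{ijk}}\mbox{id}$. Hence
\[
e^{i(d\xi^{\phi}_{ijk}+\Omega_{ijk})}\phi_i=\phi_i ,
\]
and since $\phi_i\neq0$ the exponential equals $1$. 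So $d\xi^{\phi}_{ijk}+\Omega_{ijk}\in2\pi\mathbb{Z}$, which gives $\mbox{ind}^{\phi}_{ijk}\in\mathbb{Z}$.

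The only delicate point is bookkeeping: we must keep complex scalars and the linear maps $\eta$ properly separated, and ensure the moduli telescope. Both follow from unitarity and one-dimensionality. It is also worth remarking that the ranges $\xi\in(-\pi,\pi)$ and $\Omega\in(-\pi,\pi)$ give $|2\pi\,\mbox{ind}|<4\pi$, so the index lies in $\{-1,0,1\}$. The lemma does not require this, so we would mention it only as a side remark.
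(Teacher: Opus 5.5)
Your proof is correct and follows the same route as the paper. Invariance is read off from Lemma \ref{rotation invariance}, and integrality comes from showing $e^{i(d\xi^{\phi}_{ijk}+\Omega_{ijk})}=1$ via the holonomy $\eta_{ki}\circ\eta_{jk}\circ\eta_{ij}=e^{i\Omega_{ijk}}\mbox{id}$. Your explicit telescoping gets the sign right where the paper slips: the paper asserts $\eta_{ki}\circ\eta_{jk}\circ\eta_{ij}=e^{i\,d\xi^{\phi}_{ijk}}$, whereas your computation correctly gives $e^{-i\,d\xi^{\phi}_{ijk}}$. Your closing remark that the index lies in $\{-1,0,1\}$ needs a qualifier: it holds only when $\Omega_{ijk}\in(-\pi,\pi)$, as for the self-constructed curvature, not for an arbitrary prescribed $\Omega$.
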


\begin{proof}

Every unitary map $\eta : \mathbb{C} \longrightarrow \mathbb{C}$ rotates each individual element $\phi_i$ precisely by the amount given in the rotation form. This way it can be shown that $\eta_{ki}\circ\eta_{jk}\circ\eta_{ij}=e^{i\xi_{ij}^{\phi}+i\xi_{jk}^{\phi}+i\xi_{ki}^{\phi}}$

$$\exp(i2\pi \mbox{ind}_{ijk}^{\phi}) = \mbox{exp}(i(\xi_{ki}^{\phi} + \xi_{jk}^{\phi} + \xi_{ij}^{\phi}) + i\Omega_{ijk}) = \dfrac{e^{i(\xi_{ki}^{\phi} + \xi_{jk}^{\phi} + \xi_{ij}^{\phi})}}{e^{-i\Omega_{ijk}}}=...$$

$$...=\dfrac{e^{i\xi_{ij}^{\phi}}e^{i\xi_{jk}^{\phi}}e^{i\xi_{ki}^{\phi}}}{e^{-i\Omega_{ijk}}}=
\dfrac{\eta_{ki}\circ\eta_{jk}\circ\eta_{ij}}{e^{-i\Omega_{ijk}}}=
1 \Rightarrow i2\pi \mbox{ind}^{\phi}_{ijk} =
0 \ (\mbox{mod} \ 2\pi i)\Rightarrow \mbox{ind}^{\phi}_{ijk} \in \mathbb{Z}$$

where we used the definition of the curvature 2-form from $\Omega$ (def: \ref{discrete hermitian line bundle with curvature})  in the last line.

To show the invariance under multiplication by $c$ we just need to apply lemma \ref{rotation invariance} inside the definition.

$\mbox{ind}^{c\phi}_{ijk} = \dfrac{1}{2\pi}(d\xi^{c\phi}_{ijk}+\Omega_{ijk})= \dfrac{1}{2\pi}(d\xi^{\phi}_{ijk}+\Omega_{ijk}) =  \mbox{ind}^{\phi}_{ijk}$

\end{proof}

If we are given a simplicial simplex but no curvature and we we define our own one (sec: \ref{trapped curvature}) then $\xi_{ki}^{\phi},\xi_{jk}^{\phi},\xi_{ij}^{\phi} \in (-\pi,\pi], \ \ \Omega_{ijk}\in (-\pi,\pi)$ and thus $\mbox{ind}^{\phi}_{ijk}\in\{-1,0,1\}$ as the only possible solutions.

\section{Section Zeros and Density}

\quad The link to the zeros of vector fields is established by a discrete analogous version to the Poincare-Hopf index formula that we will now establish for closed surfaces.

\begin{theorem}[Discrete closed index formula]

Let $(L,\eta,\Omega)$ be a discrete hermitian line bundle on a closed simplicial complex $M$, $\phi \in \Gamma(L)$. Define $F:=\{ijk\ : \ ijk \mbox{ is face of } M\}$. Then  

$$\sum_{ijk \in F}\mbox{ind}_{ijk}^{\phi}=
\dfrac{1}{2\pi}\sum_{ijk \in F}\Omega_{ijk}=:\deg(L)$$

\end{theorem}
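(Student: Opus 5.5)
Unfold the definition of the discrete index and sum it over all faces. By definition,
$$\sum_{ijk\in F}\mbox{ind}^{\phi}_{ijk}=\dfrac{1}{2\pi}\sum_{ijk\in F}\left(\xi^{\phi}_{ij}+\xi^{\phi}_{jk}+\xi^{\phi}_{ki}\right)+\dfrac{1}{2\pi}\sum_{ijk\in F}\Omega_{ijk},$$
so it suffices to show that the first sum, $\sum_{ijk\in F}d\xi^{\phi}_{ijk}$, vanishes. This is the discrete Stokes theorem on a closed surface: the boundary of the whole complex is empty, so the integral of an exact 2-form $d\xi$ is zero. Throughout, assume $\phi$ is generic in the sense of Definition~\ref{rotation form}, i.e.\ $\phi_i\neq 0$ and $\eta_{ij}(\phi_i)\neq-\phi_j$ on every edge, so that $\xi^{\phi}$ is defined everywhere.

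\textbf{Step 1: $\xi^{\phi}$ is antisymmetric.} I will show $\xi^{\phi}_{ji}=-\xi^{\phi}_{ij}$. Since $\eta_{ji}=\eta_{ij}^{-1}$ is unitary and the fibres are one-dimensional, $\eta_{ij}$ acts, after identifying $L_i\cong L_j\cong\mathbb{C}$, as multiplication by a unit complex number $u$. Then
$$\dfrac{\phi_i}{\eta_{ji}(\phi_j)}=\dfrac{\phi_i}{u^{-1}\phi_j}=\left(\dfrac{\phi_j}{u\,\phi_i}\right)^{-1}=\left(\dfrac{\phi_j}{\eta_{ij}(\phi_i)}\right)^{-1}.$$
Because $\arg$ is taken in the open interval $(-\pi,\pi)$, which is symmetric, we get $\arg(w^{-1})=-\arg(w)$. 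This is exactly where the genericity assumption excluding the value $\pi$ is needed, since at $\pi$ the antisymmetry could fail.

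\textbf{Step 2: pair the edge terms.} Take a consistent orientation of the faces, so that each triangle $ijk$ contributes its three oriented edges $ij$, $jk$, $ki$. Closedness means every edge $\{i,j\}$ lies in exactly two faces, and with coherent orientations these two faces traverse the edge in opposite directions, once as $ij$ and once as $ji$. Regrouping $\sum_{ijk\in F}d\xi^{\phi}_{ijk}$ by unoriented edges therefore gives $\sum_{\{i,j\}}(\xi^{\phi}_{ij}+\xi^{\phi}_{ji})=0$ by Step 1. This leaves $\sum\mbox{ind}^{\phi}_{ijk}=\frac{1}{2\pi}\sum\Omega_{ijk}$, which is $\deg(L)$ by definition.

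The main obstacle is Step 2's use of orientation. The paper's notion of \emph{closed} only says every edge lies in two faces and does not explicitly require orientability, while the face orientation is implicitly used in defining $\Omega_{ijk}$ and $\mbox{ind}_{ijk}$ as 2-forms. I would argue that the 2-form $\Omega$ and the holonomy condition $\eta_{ki}\circ\eta_{jk}\circ\eta_{ij}=e^{i\Omega_{ijk}}$ presuppose a chosen orientation of each face. I would therefore state explicitly that $M$ is assumed oriented, which holds since $M\subset\mathbb{R}^3$ is a closed embedded surface. Under that assumption the cancellation of paired edges is immediate.

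As an aside, Lemma~\ref{lemma: index invariance} additionally shows that each summand is an integer, so $\deg(L)\in\mathbb{Z}$. This is not needed for the equality itself.
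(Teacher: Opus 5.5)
Your proposal is correct and is essentially the paper's proof: after unfolding the definition of $\mbox{ind}^{\phi}_{ijk}$, the sum $\sum_{ijk\in F}d\xi^{\phi}_{ijk}$ vanishes because $\xi^{\phi}_{ji}=-\xi^{\phi}_{ij}$ and each oriented edge occurs exactly once in the sum over the faces of a closed, coherently oriented surface. You go slightly beyond the paper by actually verifying the antisymmetry and by making orientability explicit; note, though, that excluding $\arg=\pi$ really requires that $\phi_j$ not be a negative real multiple of $\eta_{ij}(\phi_i)$, which is stronger than the stated condition $\eta_{ij}(\phi_i)\neq-\phi_j$ when $|\phi_i|\neq|\phi_j|$.
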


\begin{proof}

$$\sum_{ijk \in F}\dfrac{1}{2\pi}\Omega_{ijk} = 
\sum_{ijk \in F} \left (\mbox{ind}_{ijk}^{\phi}-\dfrac{1}{2\pi}d\xi_{ijk}^{\phi}\right )=
\sum_{ijk \in F}\mbox{ind}_{ijk}^{\phi} -\dfrac{1}{2\pi}\overbrace{\sum_{ijk \in F}d\xi_{ijk}^{\phi}}^\text{=0}$$

The cancellation in the sum happens due to $\xi_{ji}^\phi=-\xi_{ij}^\phi$ and every edge orientation appearing once in the sum because $M$ is a closed surface.

\end{proof}

Since $M$ together with our hermitian line bundle $L$ mimics a smooth surface with connection, we are motivated to interpret the $\mbox{ind}_{ijk}^{\phi}$ as the signed sum of indexes of the zeros inside the triangle $ijk$ \label{discrete zeros}. The following remark links the discrete setting with the smooth setting to extract to smooth's setting interpretation.

\medskip

\begin{remark}

The main reason to justify that the discrete index $\mbox{ind}_{ijk}^{\phi}$ represents the signed sum of indices inside the triangle $ijk$ is the smooth analog theorem involving differential geometry that was proven in \cite{PK:15:CLB}.

\begin{tcolorbox}[colback=gray!10, colframe=gray!50, title={\textbf{Theorem}}]

Let $L$ be a smooth hermitian line bundle with connection $\nabla$ on a manifold $M$. Let $\Omega$ be its curvature form, $\phi \in \Gamma(L)$ a smooth section, $\xi^\phi$ its rotation form and $Z$ be the discrete set of zeros. If $C$ is a finite smooth 2-chain with $\partial C \cap Z = \emptyset$, then

$$2\pi\sum_{p\in C\cap Z}\mbox{ind}_p^\phi = \int_{\partial C}\xi^\phi + \int_C\Omega$$

\end{tcolorbox}

If we replace $C$ by the triangle $ijk$ we see the smooth analog to our discrete attempt. Even though the information acquired in the discrete setting is incomplete, by seeing how the smooth rotation form $\xi$ and the smooth curvature $\Omega$ relate to the indices we can mimic this with our discrete replacements of $\xi$ and $\Omega$.

\end{remark}

\medskip

In the discrete setting if we are not given $\Omega$, then $\mbox{ind}_{ijk}^\phi$ is only a weak approximation since a mesh that has not been triangulated fine enough would group zeros together and possibly make them cancel each other out or raise the index above the measurable because of the bounded curvature (sec: \ref{trapped curvature}). That however, is the inevitable nature of discrete set ups that lost information in between the vertices and a replacement can only be guessed using reasonable assumptions. But if we are given the 2-form $\Omega$ this becomes a better approximation.
 
Hence we can finally define what we wanted to compute all along:

\begin{definition}[Density]\label{Density}

Let $L$ be a hermitian line bundle on a closed oriented simplicial complex $M$ and $\phi \in \Gamma(L)$, then we define the \textit{zero density of $\phi$ on the face $ijk$ of $M$} as

$$Z_{ijk}(\phi) := \dfrac{\mbox{ind}_{ijk}^\phi}{\mbox{Area}(ijk)}$$

\end{definition}

Once again, the finer the triangulation the mesh is the more accurate this density becomes to the smooth case. This expression is also invariant under non-zero section multiples $c\phi,\ \ c\in \mathbb{C}\setminus\{0\}$.

\section{Random Section Smoothing}\label{section:Random Section Smoothing}

\quad Now the time has to come to unite the tools established in this and the previous chapters to attempt to fulfill this thesis' purpose: to compute the expected sum of indexes on each triangle of smoothed random hermitian line bundle sections on a 2-dimensional surfaces. 

\begin{definition}[Random Section]\label{random section}

Let $\{\varphi_i\}_{i=1,...,n}$ form an orthonormal basis of eigenvectors of $\Delta$. And let $X$ be a non-vanishing normalized section $\in\Gamma(L)$. $\phi = \sum_{i=0}^{n}\phi_i\varphi_i \in \Gamma(L)$ is our random section if $\langle\phi,\varphi\rangle=x_l+iy_l \in \mathbb{C}$ where $x_l$ and $y_l$ are real, independent and normal distributed. $z_l:=x_l+iy_l$ is the random variable of the sum and has $p:\mathbb{R}\longrightarrow\mathbb{R}$ as its probability density function.

\end{definition}

Since $\{\varphi_i\}_{i=1,...,n}$ forms an orthonormal basis we can be certain that the resulting $\phi$ will be as randomly distributed as with any other orthonormals basis such as the canonical one. The choice of the eigenvector basis will make our computations easier. Therefore smoothing a random section simplifies to:

$$S_t\phi =
\sum_{i=1}^n\phi_i S_t\varphi_i=
\sum_{i=1}^n e^{-t\lambda_i}z_i\varphi_i$$

So for fixed $t>0$ the density of indexed zeros in the triangle $ijk$ of a random section can be computed by

$$Z_{ijk}(S_t\phi)=Z_{ijk} \left ( \sum_{l=1}^n e^{-t\lambda_l}z_l\varphi_l \right )$$

which is why, by integrating over $\mathbb{C}$ for each $z_l$, the expected density of indices can be expressed as:

$$\int \left(\prod_{l=1}^{n} p(z_l) \right) Z_{ijk} \left ( \sum_{l=1}^n e^{-t\lambda_l}\phi_l\varphi_l\right ) dz_1...dz_{n}$$

Returning to our desire to let $t$ run to $\infty$ we can apply the convergence result established in theorem (\ref{smoothing convergence}). This is possible due to the fact that non-zero complex multiples of sections do not change the number of zeros as shown in lemma \ref{lemma: index invariance} and thus any representative of $[S_t\phi]$ has the same zeros. Additionally, since the $(Z_l)$ arise by normal distributions, the probability that $(Z_1)=0$ is zero. Therefore $i_{\min}:= \min\{ i \in \mathbb{N} \ | Z_i \neq 0\}$ equals to 1 with probability 1 and in the density computation from the theorem we can calculate with $i_{\min}=1$. Hence we simplify the density to:

$$\int \left(\prod_{l=1}^{n} p(z_l) \right ) Z_{ijk} \left ( \sum_{l=1}^{\dim(Eig(\lambda_{i_{\min}}))} e^{-t\lambda_l}z_l\varphi_l\right ) dz_1...dz_{n}$$

This expression only handles one eigenvalue multiple times ($\lambda_{1}=...=\lambda_{\dim(Eig(\lambda_{i_{\min}}))}$). So we can move $e^{-t\lambda_l}$ in front of the sum and then remove it from the density function since $e^{-t\lambda_l}\neq 0$ and non-zero multiples do not affect the number of zeros of a (lemma \ref{lemma: index invariance}). Those integrals who's $z_l$ influence vanishes integrate to 1.

\begin{equation}\label{eq:probability density}\int \left(\prod_{l=1}^{\dim(Eig(\lambda_{i_{\min}}))} p(z_l) \right ) Z_{ijk} \left ( \sum_{l=1}^{\dim(Eig(\lambda_{i_{\min}}))} z_l\varphi_l\right ) dz_1...dz_{\dim(Eig(\lambda_{i_{\min}}))}
\end{equation}




In most cases, especially when the geometry was not generated by some parametric equations, all eigenvalues are unique (as in a random sample from a continous spectrum). Therefore we will usually face $\dim(Eig(\lambda_1))=1$.

Assume that $\dim(Eig(\lambda_1))=1$ which would again simplifies the computations to:

$$\int p(z_1) Z_{ijk} \left ( z_1\varphi_1\right ) dz_1 = \int p(z_1) Z_{ijk} \left ( \varphi_1\right ) dz_1 = Z_{ijk} \left ( \varphi_1\right )$$

This can be interpreted as a deterministic behaviour of the zeros of smoothed sections on closed surfaces whose smallest eigenvalue $\lambda_1$ only appears once. The resulting expected density of indexed zeros for each triangle $ijk$ can then be combined to view globally where the zeros distribute.

If $\dim(Eig(\lambda_1))>1$ we do not get a deterministic behaviour, but rather an even distribution along the possible zeros with linear combinations from $Eig(\lambda_1)$. However, I did not manage to simplify expression (\ref{eq:probability density}).

At last let us think about what happens if we smooth nothing i.e. set $t=0$ in $S_t$. Since the area is independent of $t$ let us focus on the average index on a triangle $ijk$. Let $p(\phi)$ be the probability distribution of sections dependant on $Z_1,...,Z_n$ as in the definition above.

\begin{align*}
\int \left(\prod_{l=1}^{n} p(z_l) \right) \mbox{ind}_{ijk} \left ( \sum_{l=1}^n \phi_l\varphi_l\right ) dz_1...dz_{n} 
&= \int_{\Gamma(L)} p(\phi)\mbox{ind}_{ijk}^\phi d\phi \\
&= \dfrac{1}{2\pi} \int_{\Gamma(L)} p(\phi) \left( \xi^{\phi}_{ij} + \xi^{\phi}_{jk} + \xi^{\phi}_{ki}+\Omega_{ijk}\right ) d\phi \\
&=        \dfrac{1}{2\pi}   \underbrace{\int_{\Gamma(L)}p(\phi)\xi^{\phi}_{ij}d\phi}_{=0}\\
& \ \ \ + \dfrac{1}{2\pi} \underbrace{\int_{\Gamma(L)}p(\phi)\xi^{\phi}_{jk}d\phi}_{=0}\\
& \ \ \ + \dfrac{1}{2\pi} \underbrace{\int_{\Gamma(L)}p(\phi)\xi^{\phi}_{ki}d\phi}_{=0}\\
& \ \ \ + \dfrac{1}{2\pi}\int_{\Gamma(L)}p(\phi)\underbrace{\Omega_{ijk}}_{\text{const.}} d\phi \\
&=\dfrac{\Omega_{ijk}}{2\pi}\int_{\Gamma(L)}p(\phi)d\phi=\dfrac{\Omega_{ijk}}{2\pi}
\end{align*}

where the zeros happen due to $\xi_{ij}^\phi$ taking every possible value just as often positve as negative.

This shows that the average signed number of zeros of random not yet smoothed vectorfields are dependant on the curvature 2-form $\Omega$ as well as the area of the triangle $ijk$.

\begin{equation}\label{eq:unsmoothed_result}
\text{Average value of } Z_{ijk}(S_0\phi) \ \ \ = \ \ \ \dfrac{\Omega_{ijk}}{2\pi \mbox{Area}(ijk)}
\end{equation}

\chapter{Integral Geometry Approach}\label{chapter:Integral Geometry Approach}

\quad The discrete index approach from the previous chapter struggles with obtaining an evident solution for the cases where $\dim(Eig(\lambda_1))>1$. We need to find a different approach to the expression (\ref{eq:probability density}) that I did not manage to simplify. In this chapter we will cover an integral geometric approach to obtain an expression of the density of zeros with sign and multiplicity which serves well for $\dim(Eig(\lambda_1))>1$.

The derivation of the solution is in its idea similar to the proof of the Cauchy-Crofton formula \cite{santalo2004integral}, which links the number intersections of all lines with a curve to the length of the curve. We will now set up the tools to perform something similar. To do this properly with transparent intuition we will establish the basic idea first.

\section{Basic Idea}\label{Initial Approach}

\quad Let's take a non-vanishing normalized base section $X\in\Gamma(L)$. We also require special sections:

\begin{definition}[Delta section]\label{delta section}

The \emph{delta section} for a vertex $i$ from a simplicial complex $M$ is define as

$$ \delta_i \in \Gamma(L) \ : \ \forall \phi \in \Gamma(L) \Rightarrow \langle \! \langle \delta_i, \phi \rangle \! \rangle = \langle X_i,\phi_i\rangle \in L_i $$

\end{definition}

For $\Gamma(L)$ the set $\{\delta_i \ : \ i \in V \}$ is similar to the canonical basis $\{e_i \ : \ i=1,...,n \}$ of $\mathbb{C}^n$. According to the inner product from definition (\ref{hermitian inner product}) $\{\delta_i \ : \ i \in V \}$ is a orthonormal basis since 

\[
\langle \! \langle \delta_i ,\delta_j \rangle \! \rangle = 
\begin{cases}
1 \ , \ i=j\\[3mm]
0 \ , \ i\neq j
\end{cases}
\]

The linear function $\langle \delta_i , \ . \ \rangle$ also serves as en \emph{evaluator} at vertex $i$ since if $\phi=\sum_{i=1}^nz_iX_i\in \Gamma(L)$ then

$$\langle \! \langle \delta_i, \phi \rangle \! \rangle = \langle X_i,z_iX_i\rangle=z_i$$

Picking up a random section $\phi$ as in (def: \ref{random section}), using the scalar product we see that $\phi$ has a zero at $p$ if and only if $\langle \! \langle \delta_i , \phi \rangle \! \rangle = 0$. Now by smoothing $\phi$ to $S_t\phi$ and since $S_t$ is self adjoined we can express the scalar product as

$$0 = \langle \! \langle \delta_i , S_t\phi \rangle \! \rangle = \langle \! \langle \overbrace{S_t\delta_i}^{\psi_i^t:=} , \phi \rangle \! \rangle = \langle \! \langle \psi_i^t , \phi \rangle \! \rangle $$

Notice how $0=\langle \psi_i^t , \phi \rangle$ can be interpreted as $\phi$ being orthogonal to $\psi_i^t$. Essentially the condition that $S_t\phi$ is zero at $p$ is equivalent to the statement that $\psi_i^t$ is inside the hyperplane $\phi^{\perp}$ induced by $\phi$. As $\phi$ was completely random, we basically seek an expression of the average intersection volume between all hyperplanes and the section $\psi_i^t$. This expression will link this problem to integral geometry.

However, the vertex set is discrete and the chance of having a zero right on a vertex is zero. To keep the idea of identify zeros of faces with intersection with hyperplanes we are now required to do a lot more work to formally work with this.

Lets keep in mind (again) that section multiplications by non-zero factors don't affect the zeros. Hence we can focus on the intersections of $[\psi^t(p)]$ with all hyperplanes inside $\mathbb{CP}^{n-1}$. This is especially useful because inside $\mathbb{CP}^{n-1}$ all hyperplanes are oriented and unique, thus making the space of hyperplanes inside $\mathbb{CP}^{n-1}$ identifiable with itself.

From here on we will focus on the computation of the zeros for an arbitrary triangle in the simplicial complex $M$ made by the neighbouring vertices $ijk$. Many of the following results where archived with the help of Felix Knöppel’s notes \cite{IntegralGeometry:2015} and \cite{Kodaira:2015}.

\section{Embedding into $\mathbb{CP}^{n-1}$}\label{Embedding into}

\quad First we will find a suitable embedding of the surface $M$ of a simplicial complex with $n\in\mathbb{N}$ vertices into $\mathbb{CP}^{n-1}$ because of its later use when identifying the zeros. To do this, we will introduce the Kodaira correspondence taken from Franz Pedit's lecture\footnote{\url{https://www2.le.ac.uk/departments/mathematics/extranet/staff-material/staff-profiles/kl96/stuff/cseminar.pdf}}.

Let $L$ be a smooth hermitian line bundle. First we define for each $p\in M$ the \emph{evaluation map} $\mbox{ev}_p:\Gamma(L)\rightarrow L_p$ , $\psi \mapsto\psi_p$ that simply evaluates a section at the point $p$. The adjoint map is $\mbox{ev}_p^*:L_p^*\rightarrow\Gamma(L)$ and the Kodaira correspondence is then defined as the embedding

$$f:M\rightarrow \mbox{P}(\Gamma(L)^*), \ p\mapsto \mbox{\mbox{ker}}\, \mbox{ev}_p^*$$

Recall that if $\langle . , . \rangle :\Gamma(L)\times \Gamma(L) \rightarrow \mathbb{R}$ is the canonical pairing (if $\lambda\in L_p^* \Rightarrow \langle \mbox{ev}_p^*\lambda , \psi \rangle=\lambda(\psi_p)$), then we can use that $\mbox{ker}\,\mbox{ev}_p=(\mbox{img}\,\mbox{ev}_p^*)^{\perp}$.

This way we see that this embedding perfectly suits the interpretation that we made in section (\ref{Initial Approach}) about the zeros of a section being inside a hyperplane.

\begin{lemma}\label{zero correspondence}
The Kodaira correspondence $f$ links the zeros of a section $\psi\in\Gamma(L)$ with the intersection points of $f(p)$ with the hyperplane $[\psi^\perp]$.
\end{lemma}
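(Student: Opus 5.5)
The claim to establish is: for $p\in M$ and $\psi\in\Gamma(L)\setminus\{0\}$, we have $\psi_p=0$ if and only if the point $f(p)$ lies in the hyperplane $[\psi^\perp]\subset \mathbb{CP}^{n-1}$. The proof is essentially a linear-algebra unwinding of the definitions, so the plan is to fix the identifications carefully and then chase kernels and images.

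\textbf{Step 1: fix the meaning of $f(p)$.} Since $L_p$ is one-dimensional and, on a closed connected $M$ with enough sections, $\mbox{ev}_p$ is surjective, the adjoint $\mbox{ev}_p^*:L_p^*\to\Gamma(L)^*$ is injective. Hence $\mbox{img}\,\mbox{ev}_p^*$ is a complex line in $\Gamma(L)^*$, and this line is the point $f(p)$. The paper writes ``$\mbox{ker}\,\mbox{ev}_p^*$'', but the kernel is trivial, so I would read $f(p)$ as $[\mbox{img}\,\mbox{ev}_p^*]$. Equivalently, $f(p)$ is the hyperplane $\mbox{ker}\,\mbox{ev}_p\subset\Gamma(L)$, via projective duality.

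In the discrete setting the identification is concrete. The hermitian product $\langle\!\langle\,.\,,\,.\,\rangle\!\rangle$ identifies $\Gamma(L)^*$ with $\Gamma(L)\cong\mathbb{C}^n$, and by Definition \ref{delta section} the functional $\phi\mapsto\langle X_i,\phi_i\rangle$ is represented by $\delta_i$. So $f(i)=[\delta_i]$. If we work with the smoothed section $S_t\phi$ instead, self-adjointness of $S_t$ gives $f(i)=[\psi_i^t]=[S_t\delta_i]$, exactly as in Section \ref{Initial Approach}.

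\textbf{Step 2: the equivalence.} Using the canonical pairing, $\langle \mbox{ev}_p^*\lambda,\psi\rangle=\lambda(\psi_p)$ for all $\lambda\in L_p^*$. Then
$\psi_p=0 \iff \lambda(\psi_p)=0$ for all $\lambda$ $\iff \psi\perp\mbox{img}\,\mbox{ev}_p^*$.
The first equivalence holds because $L_p^*$ separates points of $L_p$. This is precisely the stated relation $\mbox{ker}\,\mbox{ev}_p=(\mbox{img}\,\mbox{ev}_p^*)^{\perp}$. Since $\mbox{img}\,\mbox{ev}_p^*$ is a line, the last condition says that the representative of $f(p)$ lies in $\psi^\perp$, i.e. $f(p)\in[\psi^\perp]$.

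The discrete version is the one-line computation $\psi_i=0\iff\langle\!\langle\delta_i,\psi\rangle\!\rangle=0\iff[\delta_i]\in[\psi^\perp]$.

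\textbf{Step 3: independence of representatives.} Finally I would check that the statement depends only on projective classes. Replacing $\psi$ by $c\psi$ with $c\neq0$ changes neither $\psi^\perp$ nor the zero set. Rescaling the representative of $f(p)$ does not affect orthogonality. So the correspondence is well defined on $\mathbb{CP}^{n-1}$.

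\textbf{Main obstacle.} The only genuine difficulty is in Step 1: making the definition of $f$ coherent. This means interpreting the ``$\mbox{ker}\,\mbox{ev}_p^*$'' as the image line, or as the dual hyperplane, and justifying that $\mbox{ev}_p$ is nonzero so that $f(p)$ is actually a point of projective space. In the discrete case this is immediate, since $\delta_i\neq0$. For points $p$ in the interior of faces, where $\mbox{ev}_p$ is not yet defined, I would either restrict the lemma to vertices or define $\mbox{ev}_p$ by some interpolation. Everything after that is a direct chase of adjoints.
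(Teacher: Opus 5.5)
Your proposal is correct and is essentially the paper's own argument: the paper's proof is exactly the chain $\psi_p=0 \Leftrightarrow \psi\in\mathrm{ker}\,\mathrm{ev}_p \Leftrightarrow \psi\in(\mathrm{img}\,\mathrm{ev}_p^*)^{\perp} \Leftrightarrow f(p)\subset[\psi^\perp]$, and it too silently takes $f(p)=\mathrm{img}\,\mathrm{ev}_p^*$ (the ``$\mathrm{ker}\,\mathrm{ev}_p^*$'' in the definition is a slip, as you noticed). Your additions (the concrete $\delta_i$ version and the projective well-definedness check) are fine, with one notational caveat: $[S_t\delta_i]$ is the image of $i$ under the smoothed map $[S_t f]$, not $f(i)$ itself.
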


\begin{proof}
W.l.o.g. let $p\in M$ be any point and $\psi\in\Gamma(L)$ any section.

$$\psi_p=0 \Leftrightarrow \psi\in \mbox{ker}\,\mbox{ev}_p \Leftrightarrow \psi \in (\mbox{img}\,\mbox{ev}_p^*)^{\perp} \Leftrightarrow \mbox{img}\,\mbox{ev}_p^*\stackrel{def}{\tiny =}f(p) \subset [\psi^\perp] \Leftrightarrow f(p)\cap [\psi^\perp]\neq \emptyset$$

Thus a section $\psi$ has a zero at $p$ if and only if $f(p)\in [\psi^\perp]$.
\end{proof}

But that happens in a smooth setting. To connect this in the discrete setting we need to start with the same embedding but only defined in the vertices of the simplicial complex $M$ with vertex set $V$.

$$f:V\rightarrow \mbox{P}(\Gamma(L)^*), \ i\mapsto \mbox{img}\, \mbox{ev}_i^*$$

Next we want to extend this by mapping the edges of $M$ through a simple demand: \emph{the edges of $M$ are mapped to geodesics in $\mathbb{CP}^{n-1}$} where $n$ is the number of vertices in $M$.

By selecting a non-vanishing base section $X$ of the sections $\Gamma(L)$ of a discrete hermitian line bundle (sec: \ref{base section}) we can identify $\Gamma(L)$ with $\mathbb{C}^n$ and thus identify $\mbox{P}(\Gamma(L)^*)$ with $\mathbb{CP}^{n-1}$. This is why we see the Kodaira correspondence as an embedding into $\mathbb{CP}^{n-1}$.

If we had a smooth setting we would work with the Fubini-Study metric as mentioned in (sec: \ref{Fubini-Study metric}), but in the discrete setting it’s analog is the discrete section product defined in (def: \ref{hermitian inner product}) which we will focus on from now on. 

For every vertex $i\in M$ we also define a \emph{piecewise linear hat functions} $x_i$ to carry the value $1$ at the vertex $i$ and to linearly decent to 0 when approaching other vertices. We define the construction of the extension of the discrete Kodaira correspondence in the following lemma.

\begin{lemma}[Geodesic extension]
Let $f$ be the Kodaira correspondence to a simplicial complex $M$ and $L$ be a discrete hermitian line bundle on $M$. If $\eta$ is a discrete connection on $L$ and $i,j$ are neighbouring vertices, we can extend the Kodaira correspondence on the edge $ij$ through

$$f_{ij}:=[x_i\psi^*+x_j\eta_{ji}^*\psi^*] \ , \ \ \text{ for } \ \ \ 0\neq\psi^*\in L_i^*$$

to define a piecewise smooth extension of $f$ such that the image of the edge $ij$ is a geodesic.
\end{lemma}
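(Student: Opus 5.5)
Three things need checking: the formula is well defined, it agrees with $f$ at the two vertices, and the resulting curve is a geodesic in $\mathbb{CP}^{n-1}$ for the Fubini--Study metric \eqref{eq:Fubini-Study metric}.

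\textbf{Well-definedness and endpoints.} Take $\psi^*\in L_i^*\setminus\{0\}$ and regard $\psi^*$ and $\eta_{ji}^*\psi^*\in L_j^*$ as elements of $\Gamma(L)^*$ via $\mbox{ev}_i^*$ and $\mbox{ev}_j^*$. Concretely, after choosing the base section $X$, these are multiples of $e_i$ and $e_j$ in $\mathbb{C}^n$, i.e.\ of $\delta_i$ and $\delta_j$ (Def.~\ref{delta section}). The two vectors are linearly independent, so
\[
x_i\psi^*+x_j\eta_{ji}^*\psi^*
\]
never vanishes on the edge, because there $x_i+x_j=1$ with $x_i,x_j\geq 0$. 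The bracket therefore always defines a point of $\mathbb{CP}^{n-1}$. Replacing $\psi^*$ by $c\psi^*$ with $c\neq0$ multiplies the whole vector by $c$, since $\eta_{ji}^*$ is linear, so the projective class does not depend on the choice of $\psi^*$. At vertex $i$ we have $x_i=1$ and $x_j=0$, so we obtain $[\psi^*]=\mbox{img}\,\mbox{ev}_i^*=f(i)$. At vertex $j$ we obtain $[\eta_{ji}^*\psi^*]$, which equals $f(j)$ because $\eta_{ji}^*$ is a unitary isomorphism $L_i^*\to L_j^*$ and so its image spans $\mbox{img}\,\mbox{ev}_j^*$. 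Hence $f_{ij}$ extends $f$ continuously, and it is smooth in the interior of the edge.

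\textbf{Geodesic property.} The image lies in the projective line $[\mathrm{span}(a,b)]\cong\mathbb{CP}^1$, where $a:=\psi^*$ and $b:=\eta_{ji}^*\psi^*$. Since $\eta_{ji}$ is unitary, $|a|=|b|$, and $a\perp b$ because they live over different vertices (orthonormality of the $\delta_i$ after normalisation by the area weights). Normalise so that $|a|=|b|=1$. The points $[a]$ and $[b]$ are then at Fubini--Study distance $\pi/2$, the maximum. The curve $s\mapsto[\cos s\,a+\sin s\,b]$ for $s\in[0,\pi/2]$ is a unit speed horizontal great circle arc on $S^{2n-1}$: it has constant speed and its velocity is orthogonal to $i$ times the position. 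Its projection is therefore a geodesic of $\mathbb{CP}^{n-1}$. The map $s\mapsto\tan s=x_j/x_i$ reparametrises it into exactly the curve $[x_ia+x_jb]$. So the image of the edge is a geodesic segment, traversed by a monotone (not constant speed) parametrisation.

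\textbf{Main obstacle.} The delicate step is the orthogonality and equal-norm claim under the weighted product $\langle\!\langle\cdot,\cdot\rangle\!\rangle$ of Def.~\ref{hermitian inner product}. The area weights $A_i$ and $A_j$ may make the representatives of $a$ and $b$ have different lengths. I would handle this by noting that orthogonality still holds. If the norms differ, $[x_ia+x_jb]$ is still a reparametrisation of the same horizontal great circle through $[a]$ and $[b]$, now with $\tan s=(x_j|b|)/(x_i|a|)$. This parameter is still monotone in $x_j/x_i$, so the geodesic conclusion survives.
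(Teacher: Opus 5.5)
Your proof is correct and follows the same route as the paper. You check the endpoint values, then note that $x_i\psi^*+x_j\eta_{ji}^*\psi^*$ is a straight segment between two Hermitian-orthogonal vectors, so its projection is a reparametrised horizontal great circle and hence a Fubini--Study geodesic. Your extra points fill in details the paper's very brief proof leaves implicit, rather than giving a different argument. These are independence of the choice of $\psi^*$, non-vanishing along the edge, and the reparametrisation $\tan s = x_j|b|/(x_i|a|)$, which absorbs the unequal norms coming from the area weights $A_i\neq A_j$.
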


\begin{proof}
Since $L$ is a hermitian line bundle we know that $\eta_{ij}=r_{ij}$Id for some $r_{ij}\in \mathbb{C}$ with $|r_{ij}|=1$. $f_{ij}(i)=[\phi_i^*],\ f_{ij}(j)=[r_{ji}\phi_j^*]=[\phi_j^*]$. $x_i\phi+x_j\eta_{ji}^*\psi^*$ resembles a straight line with $\phi_i^* \perp \phi_j^*$ according to the discrete section product, which is why its projection is a geodesic.
\end{proof}

Now we want to think about the extension of the discrete Kodaira correspondence $f$ onto the faces $ijk$ where $i,j,k$ are three neighbouring vertices on $M$. But before attempting that we define ourselves the geodesic triangle $\triangle:=\{[x],[y],[z]\}$ as the triangle spanned by geodesics between the three points $[x],[y],[z]$. Thanks to the geodesic extension of the Kodaira correspondence looking at this geodesic triangles now becomes relevant since $f$ embeds faces $ijk$ to geodesic triangles. Let's define the

\begin{definition}[Shape invariante]
If $[x],[y],[z]\in \mathbb{CP}^{n-1}$, then the shape invariant associated to the geodesic triangle $|\triangle|=\{[x],[y],[z]\}$ is defined as:

$$\Upsilon (|\!\bigtriangleup\!|) = 
\mathrm{Re}\left(\frac{\langle\!\langle x,y\rangle\!\rangle_{\mathbb C}\langle\!\langle y,z\rangle\!\rangle_{\mathbb C}\langle\!\langle z,x\rangle\!\rangle_{\mathbb C}}
{\langle\!\langle x,x\rangle\!\rangle_{\mathbb C}\langle\!\langle y,y\rangle\!\rangle_{\mathbb C}\langle\!\langle z,z\rangle\!\rangle_{\mathbb C}}\right).$$
 
\end{definition}

The invariance is evident when noticing how orthogonal transformations are eaten up by the scalar products and scalar multiplications are canceled by the fraction. By the Fubini-study metric the lengths of the edge between $[x]$ and $[y]$ are given by $\mbox{arcos} \left ( \frac{\abs*{\langle x, y \rangle}}{\abs*{x}\abs*{y}} \right )$ as established in (sec: \ref{Fubini-Study metric}). Thus

$$
\left|\frac{\langle\!\langle x,y\rangle\!\rangle_{\mathbb C}\langle\!\langle y,z\rangle\!\rangle_{\mathbb C}\langle\!\langle z,x\rangle\!\rangle_{\mathbb C}}
{\langle\!\langle x,x\rangle\!\rangle_{\mathbb C}\langle\!\langle y,y\rangle\!\rangle_{\mathbb C}\langle\!\langle z,z\rangle\!\rangle_{\mathbb C}}\right|
=\left|\frac{\langle\!\langle x,y\rangle\!\rangle_{\mathbb C}\langle\!\langle y,z\rangle\!\rangle_{\mathbb C}\langle\!\langle z,x\rangle\!\rangle_{\mathbb C}}
{||x||\,||y||\,||y||\,||z||\,||z||\,||x||}\right| =|\cos(a)\cos(b)\cos(c)|$$

and thus there is a suitable $0\leq \omega_\triangle<2\pi$ such that

$$\frac{\langle\!\langle x,y\rangle\!\rangle_{\mathbb C}\langle\!\langle y,z\rangle\!\rangle_{\mathbb C}\langle\!\langle z,x\rangle\!\rangle_{\mathbb C}}
{\langle\!\langle x,x\rangle\!\rangle_{\mathbb C}\langle\!\langle y,y\rangle\!\rangle_{\mathbb C}\langle\!\langle z,z\rangle\!\rangle_{\mathbb C}} = \cos(a)\cos(b)\cos(c)e^{i\omega_\triangle}$$

Like the shape invariant $\Upsilon$ this is still an invariant. We know that $\omega_\triangle$ is related to the K\"ahler form $\sigma$ that we define in sec. (\ref{chap:kaehler section}) by the following formula from \cite{hangan1994geometrical} \cite{Hangan1994}:

\begin{align}\label{eq:omega_triangle}
\omega_\triangle = 2\int_\triangle \omega_K \text{ mod } 2\pi\mathbb{Z}
\end{align}

Note how we cannot specify the value here up to multiples of $2\pi\mathbb{Z}$. This causes the exact same uncertainty that we got with the discrete approach where the $\mbox{ind}_{ijk}$ was only able to pick values from $\{-1,0,1\}$ if we where not given a 2-form $\Omega$ with the line bundle $L$ (lemma: \ref{lemma: index invariance}). If we are given $\Omega$ then we can adjust $\omega_\triangle$ to take values beyond the bounds fo $[0,2\pi)$ for a more precise answer. Formally approaching it this way will bear the fruit of a far more compact expression for the expected sum of indices on each triangle when we will complete the calculations in section (\ref{bringing together}).

Once we embed the vertices and edges of a simplicial complex into a geodesic triangle $\triangle=\{[x],[y],[z]\} \subset \mathbb{CP}^{n-1}$ we can use the formula (\ref{eq:omega_triangle}) to compute the integral of the K"ahler form as

\begin{align}\label{eq:omega_triangle computation}
2\int_\triangle \omega_K&= \omega_\triangle \text{ mod } 2\pi\mathbb{Z}\\ &=
arg\left(\frac{\langle\!\langle x,y\rangle\!\rangle_{\mathbb C}\langle\!\langle y,z\rangle\!\rangle_{\mathbb C}\langle\!\langle z,x\rangle\!\rangle_{\mathbb C}}
{\langle\!\langle x,x\rangle\!\rangle_{\mathbb C}\langle\!\langle y,y\rangle\!\rangle_{\mathbb C}\langle\!\langle z,z\rangle\!\rangle_{\mathbb C}} \right) \text{ mod } 2\pi\mathbb{Z}
\\ &=
\arg(\langle\!\langle x,y\rangle\!\rangle_{\mathbb C}\langle\!\langle y,z\rangle\!\rangle_{\mathbb C}\langle\!\langle z,x\rangle\!\rangle_{\mathbb C}) \text{ mod } 2\pi\mathbb{Z}
\\ &=
(\ \ \arg(\langle\!\langle x,y\rangle\!\rangle_{\mathbb C})+\arg(\langle\!\langle y,z\rangle\!\rangle_{\mathbb C})+\arg(\langle\!\langle z,x\rangle\!\rangle_{\mathbb C})\ \ ) \text{ mod } 2\pi\mathbb{Z}
\end{align}

The modulo $2\pi\mathbb{Z}$ will deny us the exact solution if you are not given the 2-form $\Omega$, but it is no surprise that a more exact solution is perhaps not possible given a discrete setting where the face inside the geodesic triangle was not specified. The shape of the inside of the triangle determines the exact solution but there is no trivial way to extend the Kodaira correspondence to the faces.
Note also that if w.l.o.g. $x\perp y$ that the computation involving the argument will be not determinable. This will not pose a problem later on since even a tiny amount of smoothing will solve this. Visually $x\perp y$ means that the geodesic triangle is not unique in these situations.

However, if we posses a simplicial complex $M$ with a line bundle $L$ together the curvatures $\Omega_{ijk}$ we can make a more reasonable guess about the integral. This will be done in section (\ref{bringing together}).

\section{About the Transformation Formula}

\quad Before we continue to tackle the intersection of hyperplanes problem we need to establish some calculus tools. A basic theorem from calculus goes as follows:

\begin{theorem}[Transformation formula]

If $U \subset \mathbb{R}^n$ open, $\phi:U\rightarrow \mathbb{R}^n$ is injective and smooth and $f:\phi(U)\rightarrow \mathbb{R}$ is integrable by the Lebesgue measure $\lambda$. 

Then $f \circ \phi$ is integrable on $U$ and

$$\int_{\phi(U)}f \ \lambda = \int_U (f\circ\phi)\cdot |\det(d\phi)|\lambda$$

\end{theorem}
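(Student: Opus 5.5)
The plan is to prove the transformation formula in the standard way. First reduce to simple functions, then to indicator functions of boxes. The key input is the local behaviour of $\phi$ near a point, described by its differential $d\phi$.

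\textbf{Step 1: assumptions and reductions.} I will assume in addition what the statement implicitly needs, namely that $\det(d\phi)\neq 0$ on $U$. Then $\phi$ is a diffeomorphism onto the open set $\phi(U)$, by injectivity and the inverse function theorem. (Without this, I would first discard the critical set $\{\det d\phi=0\}$. Its image has measure zero by the easy case of Sard's lemma, and the integrand on the right vanishes there, so neither side changes.)

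Both sides of the claimed identity are linear and monotone in $f$. By monotone convergence, it therefore suffices to treat $f\geq 0$ measurable, and then $f=\mathbf 1_B$ for Borel sets $B\subset\phi(U)$. Splitting $f=f^+-f^-$ afterwards gives integrability of $f\circ\phi\,|\det d\phi|$ and the full formula.

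For indicators, the statement becomes $\lambda(\phi(A))=\int_A|\det d\phi|\,\lambda$ for Borel $A\subset U$. Both sides are measures in $A$. By uniqueness of measures, it is enough to check this on half-open dyadic cubes $Q$ with $\overline Q\subset U$, since these generate the Borel $\sigma$-algebra and form a $\cap$-stable family exhausting $U$.

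\textbf{Step 2: the linear case.} For a linear map $T$, I show $\lambda(T(A))=|\det T|\,\lambda(A)$. Write $T$ as a product of elementary matrices: permutations, scalings of one coordinate, and shears $x_1\mapsto x_1+cx_2$. Check each type directly, using Fubini and the translation invariance of $\lambda$ for shears. Multiplicativity of the determinant then gives the general case.

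\textbf{Step 3: local comparison (the main obstacle).} This is where uniform estimates are needed. Fix a compact cube $K\subset U$ and $\varepsilon>0$.

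By uniform continuity of $d\phi$ and $(d\phi)^{-1}$ on $K$, there is $\delta>0$ with the following property. For every subcube $Q$ of side $<\delta$ and centre $a$, the mean value inequality applied to $(d\phi_a)^{-1}\circ\phi$ gives
\[
\phi(Q)\subset \phi(a)+d\phi_a\bigl((1+\varepsilon)(Q-a)\bigr).
\]
By Step 2, this yields $\lambda(\phi(Q))\leq(1+\varepsilon)^n|\det d\phi_a|\,\lambda(Q)$.

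Now subdivide a given cube into such small cubes and sum. Let $\varepsilon\to0$; the sums are Riemann sums of the continuous function $|\det d\phi|$. This gives the inequality $\lambda(\phi(Q))\leq\int_Q|\det d\phi|\,\lambda$.

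\textbf{Step 4: reverse inequality.} Apply the same inequality to $\phi^{-1}$ on $\phi(Q)$, with integrand $|\det d\phi|^{-1}\circ\phi^{-1}$. Extending Step 3 from cubes to measurable sets via the measure-theoretic reduction of Step 1, this gives the opposite inequality. Hence equality holds on cubes.

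By Step 1, equality then holds for all integrable $f$, which proves the theorem.
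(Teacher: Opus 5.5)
The paper gives no proof of this statement; it quotes it as a standard fact from calculus, so there is no argument of the paper's to compare with. Your outline is the usual textbook proof: linear case via elementary matrices, local comparison with a $(1+\varepsilon)$-dilated parallelepiped, Riemann sums, the inverse map for the reverse inequality, and Sard to remove the critical set, which the statement does not exclude. It is sound apart from the two points below.

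First, the literal conclusion ``$f\circ\phi$ is integrable on $U$'' is false, even when $\det d\phi\neq 0$. Take $U=(0,1)$, $\phi(x)=x^2$, $f(y)=y^{-1/2}$. Then $f$ is integrable on $\phi(U)=(0,1)$, but $f\circ\phi=1/x$ is not, while $(f\circ\phi)\,|\phi'|=2$ is. What you actually prove is integrability of $(f\circ\phi)\,|\det d\phi|$, and that is the correct statement. You should say explicitly that you are proving this corrected version.

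Second, in Step 4 the justification ``via the measure-theoretic reduction of Step 1'' does not work. The uniqueness theorem propagates \emph{equalities} of measures from a $\cap$-stable generator. The class $\{A:\mu(A)\le\nu(A)\}$ is not closed under proper differences, so an inequality on dyadic cubes does not pass to Borel sets by a $\pi$--$\lambda$ argument.

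The repair is short. Every open set is a countable disjoint union of dyadic cubes whose closures lie in it. So by countable additivity and injectivity of $\phi^{-1}$, the Step 3 inequality for $\phi^{-1}$ holds on all open sets. For a small subcube $Q'\subset Q$, the set $\phi(\mathrm{int}\,Q')$ is open and differs from $\phi(Q')$ only by the $C^1$-image of a null set. Hence
\[
\lambda(Q')\le\int_{\phi(Q')}|\det d\phi|^{-1}\circ\phi^{-1}\,\lambda\le\Bigl(\inf_{Q'}|\det d\phi|\Bigr)^{-1}\lambda(\phi(Q')).
\]
Summing over a fine subdivision of $Q$ gives lower Riemann sums of $|\det d\phi|$, and therefore $\lambda(\phi(Q))\ge\int_Q|\det d\phi|\,\lambda$.

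Alternatively, use outer regularity of $A\mapsto\int_A|\det d\phi^{-1}|\,\lambda$ to extend the inequality for $\phi^{-1}$ to all Borel sets. Then pass to nonnegative functions and apply it to $h=\mathbf{1}_Q\,|\det d\phi|$. With either repair the argument is complete.
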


What if we drop the requirement for $\phi$ to be injective? Then $p\in\phi(U)$ might have more than one pre-image, thus making the set $U$ carry redundant values for $\phi(U)$. We can however compensate this by defining $\nu_{\phi}:\phi(U)\rightarrow\mathbb{Z} \ , \ p \mapsto \#\phi^{-1}({p})$ and use $\nu_{\phi}$ to \emph{count} the function values multiple times according to their frequency in the pre-images. Figure (\ref{fig:transformation formula}) shows what is meant when covering values multiple times.

\begin{figure}
\centering
\includegraphics[width=6cm]{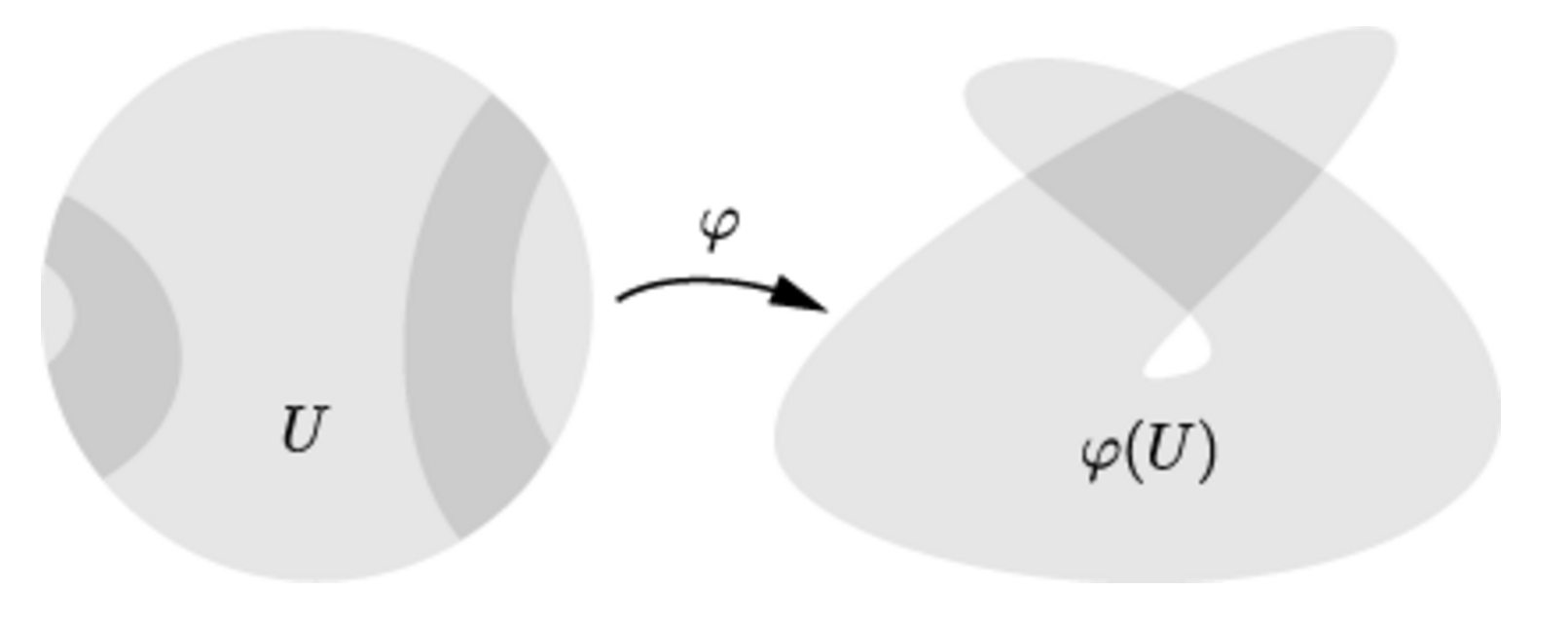}
\caption{Example how $\varphi:U\rightarrow\varphi(U)$ can have multiple preimages that have to be considered. Image from \cite{IntegralGeometry:2015}. }
\label{fig:transformation formula}
\end{figure}

If we then additionally express this in terms of general smooth manifolds we get

\begin{theorem}[Manifold transformation formula]\label{thr:Manifold transformation formula}

Let $M,N$ are compact orientable Riemannian manifolds of equal dimension, $\phi:M\rightarrow N$ smooth and $|\omega_N|$ the density of $N$. If $f:\phi(M)\rightarrow \mathbb{R}$ is $|\omega_N|$-integrable then

$$\int_{\phi(M)}f\cdot \nu_{\phi} \ |\omega_N| = \int_M |\phi^*\omega_N|$$

where $\phi^*$ is the pullback of $\phi$.
\end{theorem}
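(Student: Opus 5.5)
I read the right-hand side as $\int_M (f\circ\phi)\,|\phi^*\omega_N|$. For $f\equiv 1$ this is literally the displayed formula, and in general it is the form the left-hand side forces. The plan is to reduce to the injective transformation formula stated just before, by cutting $M$ into pieces on which $\phi$ is a diffeomorphism onto its image. Write $m=\dim M=\dim N$ and let $C\subset M$ be the critical set, i.e. the points where $d\phi$ has rank $<m$.

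\textbf{Step 1 (critical points contribute nothing).} On $C$ the pullback $\phi^*\omega_N$ vanishes, because the Jacobian determinant is zero there. Hence $\int_C (f\circ\phi)\,|\phi^*\omega_N|=0$. By Sard's theorem $\phi(C)$ has $|\omega_N|$-measure zero, so removing $\phi(C)$ from the domain of the left-hand side changes nothing either. It therefore suffices to prove the identity with $M$ replaced by the open set $M':=M\setminus\phi^{-1}(\phi(C))$ and $N$ by $N':=\phi(M)\setminus\phi(C)$. (Since $M$ is compact, $C$ and $\phi(C)$ are compact, so $M'$ is open.)

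\textbf{Step 2 (local structure over regular values).} Take $q\in N'$. By the inverse function theorem every preimage of $q$ is an isolated point, and by compactness of $M$ the fibre $\phi^{-1}(q)$ is finite, say $\{p_1,\dots,p_k\}$ with $k=\nu_\phi(q)$. A standard argument then gives a neighbourhood $V_q$ of $q$ with
\[
\phi^{-1}(V_q)=U_1\sqcup\dots\sqcup U_k ,
\]
where each restriction $\phi|_{U_l}:U_l\to V_q$ is a diffeomorphism. The argument runs as follows.
\begin{itemize}
\item Choose disjoint neighbourhoods of the $p_l$ on which $\phi$ is a diffeomorphism.
\item The complement $K$ of their union in $M$ is compact and $\phi(K)\not\ni q$.
\item Shrink $V_q$ so that it avoids $\phi(K)$.
\end{itemize}
In particular $\nu_\phi\equiv k$ on $V_q$. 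I expect this step to be the main technical point. The care needed is in using compactness to exclude preimages escaping to other parts of $M$, and in handling orientation signs by always working with densities $|\cdot|$ rather than forms.

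\textbf{Step 3 (local identity).} On each sheet $U_l$, apply the injective transformation formula in charts, i.e. the identity $\int_{V}g\,|\omega_N|=\int_{U}(g\circ\phi)\,|\phi^*\omega_N|$ for a diffeomorphism $\phi:U\to V$, which is the chart-free form of $|\det d\phi|$. Summing over $l=1,\dots,k$ gives, for any integrable $g$ supported in $V_q$,
\[
\int_{V_q} g\,\nu_\phi\,|\omega_N| \;=\; k\int_{V_q}g\,|\omega_N| \;=\; \sum_{l=1}^k\int_{U_l}(g\circ\phi)\,|\phi^*\omega_N| \;=\; \int_{\phi^{-1}(V_q)}(g\circ\phi)\,|\phi^*\omega_N| .
\]

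\textbf{Step 4 (globalize).} The sets $V_q$ cover $N'$. Since $N'$ is $\sigma$-compact, choose a countable locally finite subcover and a subordinate partition of unity $\{\rho_\alpha\}$. Apply Step 3 to $g=\rho_\alpha f$ and sum over $\alpha$. The functions $\rho_\alpha\circ\phi$ form a partition of unity on $M'$, so the right-hand sides sum to $\int_{M'}(f\circ\phi)\,|\phi^*\omega_N|$.

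Interchanging sum and integral is justified by monotone convergence, first for $f\ge 0$ and then for general $f$ by splitting into positive and negative parts. This split also shows that integrability of $f\nu_\phi$ on the left-hand side is equivalent to integrability of $f\circ\phi$ against $|\phi^*\omega_N|$. Combining with Step 1 completes the proof.
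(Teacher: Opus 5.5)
Your proof is correct. You were right to read the right-hand side as $\int_M (f\circ\phi)\,|\phi^*\omega_N|$: as printed, the right-hand side does not depend on $f$, so the statement fails for general $f$. It does hold for $f\equiv 1$, which is the only case the paper later uses (Theorem~\ref{thr: Signed manifold transformation formula}).

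The paper gives no actual proof. It only argues informally that when $\phi$ is not injective, points of $\phi(U)$ are covered several times and $\nu_\phi$ compensates by counting preimages, and then asserts the manifold version. Your argument is the rigorous form of exactly that heuristic:
\begin{itemize}
\item Sard's theorem disposes of the critical values.
\item The stack-of-records lemma over regular values turns ``counting preimages'' into finitely many disjoint sheets, on each of which the injective formula applies.
\item A partition of unity globalizes the local identity.
\end{itemize}

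Your version is also more precise than the paper on integrability. Your remark that integrability of $f\nu_\phi$ is equivalent to integrability of $f\circ\phi$ against $|\phi^*\omega_N|$ is genuinely needed. The multiplicity $\nu_\phi$ can be unbounded: a map $S^1\to S^1$ behaving like $e^{-1/\theta^2}\sin(1/\theta)$ near a point has regular values with arbitrarily many preimages. Hence $|\omega_N|$-integrability of $f$ alone, as the paper assumes, does not make the left-hand side finite. The correct statement is an identity in $[0,\infty]$ for $f\ge 0$, followed by your equivalence of integrability.

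One small detail should be added in Step~1. You only showed that the right-hand integral over $C$ vanishes, but $M'$ removes all of $\phi^{-1}(\phi(C))$. The leftover set $\phi^{-1}(\phi(C))\setminus C$ consists of regular points, where $\phi$ is a local diffeomorphism. It is therefore covered by countably many open sets on which it is a diffeomorphic preimage of the null set $\phi(C)$, so it is null. This is one line, but it should be stated.
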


However, we can modify this formula even more to get rid of the absolute values on both sides. A basic derivation reveals that if $\omega_M, \omega_N$ are the volume forms of $M$ and $N$, then 

$$\phi^*\omega_N=\det(d\phi)\omega_N$$

which is why we need to define

$$\nu_{\phi}^s : \phi(M) \rightarrow \mathbb{Z} \ , \ p \mapsto \sum_{q\in\phi^{-1}({p})} \mbox{sign}(\det(d_q\phi))$$ 

If we remove the absolute value sign on the right hand side of the last transformation formula theorem (\ref{thr:Manifold transformation formula}) we need to compensate the sign change on the left hand side as well, namely by \emph{counting} the overall positive occurrences as well as the negative occurrences of $\det(d\phi)$, precisely by replacing $\nu_{\phi}$ with $\nu_{\phi}^s$.

 We see that $\nu_{\phi}^s=\nu_{\phi}$ if we inserted $|\det(d\phi)|$ instead of $\det(d\phi)$ inside the definition as long as $\det(d_q\phi)\neq0$. The case $\det(d_q\phi)=0$ does not bother us because this occurs only on a null set, thus not altering the outcome of the integral.

By additionally taking $f\equiv1$ we acquire the following theorem that is essential for the next section.

\begin{theorem}[Signed manifold transformation formula]\label{thr: Signed manifold transformation formula}

Let $M,N$ be compact orientable Riemannian manifolds of equal dimension, $\phi:M\rightarrow N$ smooth and $\omega_M,\omega_N$ the respective volume forms of $M,N$. Then

$$\int_{\phi(M)}\nu_{\phi}^s \omega_N = \int_M \phi^*\omega_N = \int_M \det(d\phi)\cdot\omega_M$$

\end{theorem}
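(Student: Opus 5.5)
The middle term equals the right-hand term by pulling back the volume form: since $M$ and $N$ have equal dimension and $\omega_M$ is nowhere vanishing, $\phi^*\omega_N = \det(d\phi)\,\omega_M$ pointwise. Here $\det(d_q\phi)$ is computed in oriented orthonormal frames of $T_qM$ and $T_{\phi(q)}N$. This is just the statement that a top form on an $m$-dimensional space is determined by its value on one oriented basis.

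The main work is the first equality, the degree-type formula. I would argue locally and then patch, as follows.
\begin{itemize}
\item[(1)] Let $C=\{q\in M:\det(d_q\phi)=0\}$ be the critical set. By Sard's theorem $\phi(C)$ is a null set in $N$, so $\nu^s_\phi$ may be altered there without changing the left-hand integral. On $C$ itself the integrand $\det(d\phi)\,\omega_M$ vanishes, so $C$ contributes nothing to the right-hand side either.
\item[(2)] Fix a regular value $p\in\phi(M)\setminus\phi(C)$. By compactness of $M$ and the inverse function theorem, $\phi^{-1}(p)=\{q_1,\dots,q_k\}$ is finite. One can choose a neighbourhood $W$ of $p$ and disjoint neighbourhoods $U_r$ of the $q_r$ with $\phi^{-1}(W)=\bigsqcup_r U_r$ and each $\phi|_{U_r}:U_r\to W$ a diffeomorphism. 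The properness step, that no further preimages sneak in near $p$, uses compactness of $M$.
\item[(3)] On each $U_r$ apply the injective transformation formula stated earlier, in charts. Since $\det(d\phi)$ has constant sign on $U_r$, we get $\int_{U_r}\phi^*\omega_N=\operatorname{sign}(\det(d_{q_r}\phi))\int_W\omega_N$. Summing over $r$ gives $\int_{\phi^{-1}(W)}\phi^*\omega_N=\int_W\nu^s_\phi\,\omega_N$.
\item[(4)] Cover the regular values by countably many such $W$ and pass to a measurable partition, for instance via a partition of unity. Adding up the local identities over this partition, and using (1) for the null sets, yields the global identity.
\end{itemize}

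\textbf{Main obstacle.} The expected difficulty is step (4): the regular values form an open set, but it need not be all of $N$, and the neighbourhoods $W$ must be chosen consistently. I would handle this by working with the open set $N\setminus\phi(C)$ and its preimage $M\setminus C$. A countable disjoint measurable refinement of the cover of $N\setminus\phi(C)$ suffices, and $\nu^s_\phi$ is locally constant there. Measurability of $\nu^s_\phi$ and integrability are then immediate, since $\nu^s_\phi$ is bounded by the finite preimage count on compact pieces.
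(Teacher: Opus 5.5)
Your proof is correct, but it takes a different route from the paper. The paper never decomposes the map into sheets. Instead it:
\begin{itemize}
\item takes the unsigned multiplicity formula $\int_{\phi(M)}\nu_\phi\,|\omega_N|=\int_M|\phi^*\omega_N|$ (Theorem~\ref{thr:Manifold transformation formula}, quoted without proof) as given;
\item notes $\phi^*\omega_N=\det(d\phi)\,\omega_M$;
\item declares that dropping the absolute value on the right is ``compensated'' by replacing $\nu_\phi$ with $\nu^s_\phi$ on the left;
\item dismisses $\det(d_q\phi)=0$ as a null set and sets $f\equiv 1$.
\end{itemize}
This is quick and presents the signed formula as a bookkeeping variant of the unsigned one. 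However, the compensation step is precisely the content of the theorem, and the paper leaves it unargued. Your steps (2)--(3) are what actually justify it. Over a regular value the map splits into finitely many sheets, each a diffeomorphism onto $W$ with a fixed orientation sign, so each sheet contributes $\pm\int_W\omega_N$. Running the same computation with absolute values also proves the unsigned formula the paper relies on. So your argument is self-contained, needing only Sard and change of variables for diffeomorphisms. You also treat the degenerate set correctly where the paper blurs it. $C$ may have positive measure in $M$ (a constant map has $C=M$) and is harmless only because $\det(d\phi)$ vanishes there. It is the set of critical values $\phi(C)$ that is null, by Sard.

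A few small repairs are needed:
\begin{itemize}
\item Take $W$ connected, so that $\det(d\phi)$ really has constant sign on each $U_r$.
\item The preimage of $N\setminus\phi(C)$ is $M\setminus\phi^{-1}(\phi(C))$, not $M\setminus C$. The difference is null because $\phi$ is a local diffeomorphism off $C$.
\item Integrability of $\nu^s_\phi$ does not follow from boundedness on compact pieces. $\nu_\phi$ is only locally bounded on the regular values and need not be globally bounded. Instead use $|\nu^s_\phi|\le\nu_\phi$ together with the unsigned version of your partition argument: monotone convergence gives $\int_N\nu_\phi\,|\omega_N|=\int_M|\det(d\phi)|\,\omega_M<\infty$.
\item For the version used later in the paper, the domain lies over a triangle and so has boundary. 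There you must also discard the null set $\phi(\partial M)$ before invoking the inverse function theorem.
\end{itemize}
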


\section{Complex Vector Spaces and K\"ahler Angles}\label{chap:kaehler section}

\quad Before we can apply the modified transformation formula we need to explain what K\"ahler angles are and their basic environment needed to define them.

Just as we often identify $\mathbb{C}$ with $\mathbb{R}^2$ we can identify any complex vector space with a real vector space $V$ of a dimension twice as big, but to maintain the multiplicative structure that the imaginary unit $i$ causes in $\mathbb{C}^n$ we need to define the \emph{almost complete structure} $J: V \rightarrow V$ as a linear map with $J^2=-Id$ which represents the multiplication by the imaginary unit $i$. A hermitian vector space is a complex space with an euclidean inner product $\langle .,.\rangle$ such that $J$ is orthogonal. The hermitian inner product can be written as 

$$\langle a,b\rangle_{\mathbb{C}}=\langle a,b\rangle + i\underbrace{\langle Ja,b\rangle}_{\sigma(a,b):=}, \ \forall a,b\in V$$

And we call $\sigma$ the \emph{K\"ahler form} of $V$. Note that if $a\in V\rightarrow a\perp Ja$ since $\langle Ja,a\rangle=\langle JJa,Ja\rangle=\langle -a,Ja\rangle=-\langle Ja,a\rangle$ with the euclidean product.

\begin{definition}[K\"ahler angle]
Let $P$ be a 2-dimensional real plane in a complex vector space with K\"ahler form $\sigma$. The K\"ahler angle is defined as $\eta_P\in[0,\pi]$ such that $\cos\,\eta_P=\sigma(X,Y)$ for any oriented orthonormal basis $X,Y$ of $P$.

\end{definition}

The choice of oriented orthonormal basis is arbitrary since $\sigma$ is invariant under orientation preserving rotations in the plane P. Let us see this using $\theta\in[0,2\pi)$

\begin{align*} 
\sigma ( \cos\theta X -\sin\theta Y, \sin\theta X + \cos\theta Y) & = \cos\theta \sin\theta \underbrace{\langle JX,X\rangle}_{=0}\\
& \ \ + \cos^2\theta\langle JX,Y\rangle \\[2ex]
& \ \ - \sin\theta \cos\theta\underbrace{\langle JY,Y\rangle}_{=0}\\
& \ \ -\sin^2\theta\underbrace{\langle JY,X\rangle}_{= -\langle JX,Y\rangle }\\
\sigma(X,Y) &=\langle JX,Y\rangle
\end{align*}

We say that a 2-dimensional plane $P$ is a \emph{complex line} if $\eta_P\in\{0,\pi\}$ ($\Rightarrow JP = P$) and \emph{real plane} if $\eta_P=\dfrac{\pi}{2}$ ($\Rightarrow JP\perp P$). The real planes are those in the image of the inclusion map from $\mathbb{R}^n\rightarrow \mathbb{C}^n$. So the K\"ahler angle measures the degree of divergence from being a purely complex line.

We will let $P$ be a real 2-dimensional plane in V, which is a hermitian vector space by inheritance. If we now take a positively oriented orthonormal basis $X,Y \in P$ then we can define a \emph{plane bound complex structure} $J_P$ by:

$$J_P:P\rightarrow P, \ \ J_PX=Y \ , \ J_PY=-X$$

\begin{theorem}\label{thr:plane_bound}
Let $X,Y$ be a positively oriented basis of a 2-dimensional real plane P in a complex vector space V. Let $\pi_P:V\rightarrow P$ be the orthogonal projection on $P$ and $J|_P$ the restriction of $J$ to $P$. Then the plane bound complex structure $J_P$ fulfils:
$$\pi_P\circ J|_P=\cos\,\eta_P \cdot J_P$$
\end{theorem}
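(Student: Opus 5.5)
Since both sides are real-linear maps $P\rightarrow P$, it suffices to check the identity on the basis $X,Y$. We take $X,Y$ to be a positively oriented \emph{orthonormal} basis of $P$; this is how $J_P$ was defined, and any positively oriented basis yields the same $J_P$ after Gram--Schmidt. With this choice the orthogonal projection is simply
$$\pi_P(v)=\langle v,X\rangle X+\langle v,Y\rangle Y .$$

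First apply this to $JX$. The coefficient $\langle JX,X\rangle$ vanishes: by the remark after the definition of the K\"ahler form, $a\perp Ja$ for every $a$. The coefficient $\langle JX,Y\rangle$ is $\sigma(X,Y)=\cos\eta_P$ by the definition of the K\"ahler angle. Hence
$$\pi_P(JX)=\cos\eta_P\, Y=\cos\eta_P\, J_PX .$$

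Next apply it to $JY$. Again $\langle JY,Y\rangle=0$. For the other coefficient we use that $J$ is orthogonal and $J^2=-\mathrm{Id}$, which gives $\langle JY,X\rangle=\langle J^2Y,JX\rangle=-\langle Y,JX\rangle=-\sigma(X,Y)$. This is the same skew-symmetry of $\sigma$ already used in the rotation-invariance computation above. Therefore
$$\pi_P(JY)=-\cos\eta_P\, X=\cos\eta_P\, J_PY .$$

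The two maps agree on a basis, so by real linearity $\pi_P\circ J|_P=\cos\eta_P\cdot J_P$ on all of $P$. The only point needing care is that $\cos\eta_P$ does not depend on the chosen oriented orthonormal basis, and this was already shown after the definition of the K\"ahler angle. There is no real obstacle; the argument is a direct computation from the definitions.
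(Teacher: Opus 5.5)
Your proof is correct and is essentially the paper's argument. The paper expands a general $Z\in P$ as $\langle X,Z\rangle X+\langle Y,Z\rangle Y$ and computes $\pi_P(JX)=\cos\eta_P\,Y$ and $\pi_P(JY)=-\cos\eta_P\,X$ from the same facts, $\langle JX,X\rangle=\langle JY,Y\rangle=0$ and $\langle JY,X\rangle=-\langle JX,Y\rangle=-\cos\eta_P$; this is your basis check followed by linearity, and your explicit restriction to an oriented orthonormal basis is what the paper tacitly assumes when it writes that expansion.
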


\begin{proof}

Since $Z\in P$ write $Z=\langle X,Z\rangle X + \langle Y,Z\rangle Y$.


\begin{align*} 
\pi_P\circ J|_P(Z) &=\pi_P\circ J (\langle X,Z\rangle X + \langle Y,Z\rangle Y \\
&=\langle X,Z\rangle \pi_P(JX) + \langle Y,Z\rangle \pi_P(JY) \\
&=\langle X,Z\rangle \left(\underbrace{\langle X,JX}_{=0}\rangle X + \langle Y,JX\rangle Y\right) + \langle Y,Z\rangle \left(\langle X,JY\rangle X + \underbrace{\langle Y,JY\rangle}_{=0} Y \right) \\
&=\langle X,Z\rangle \langle Y,JX\rangle Y + \langle Y,Z\rangle \langle X,JY\rangle X \\
&=\cos\eta_P\langle Y,Z\rangle \underbrace{-X}_{=J_PY}+\cos\,\eta_P\langle X,Z\rangle \underbrace{Y}_{=J_PX}\\
&=\cos\,\eta_PJ_P(\langle X,Z\rangle X + \langle Y,Z\rangle Y ) = \cos\,\eta_PJ_P(Z)
\end{align*}

So if $Z\in P$ then $\pi_P\circ J|_P(Z)=\cos\,\eta_PJ_P(Z)$. 
\end{proof}

Figure (\ref{fig:kaehler angle}) shows how this theorem (\ref{thr:plane_bound}) can be understood.

\begin{figure}
\centering
\includegraphics[width=5cm]{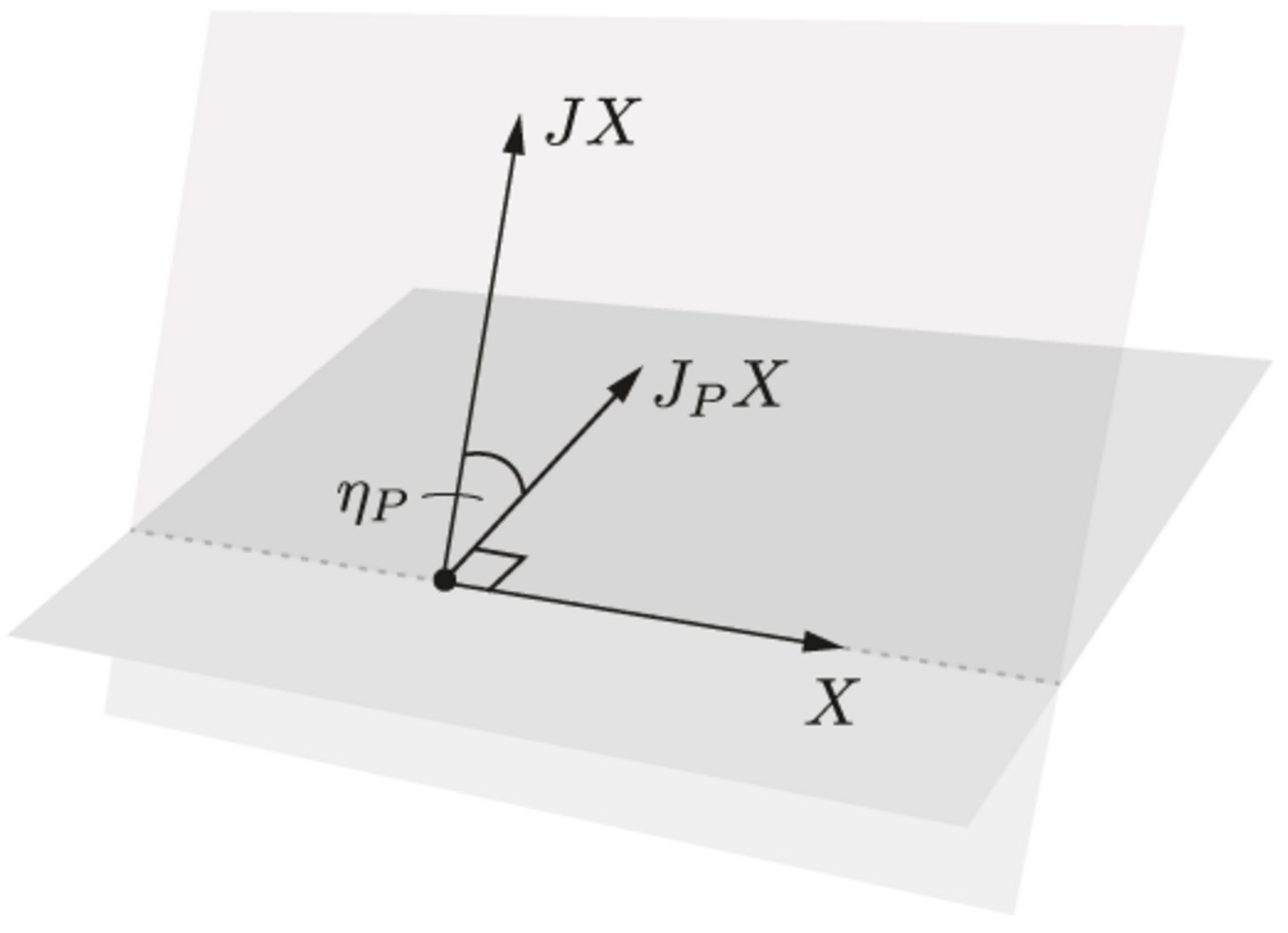}
\caption{Example how $X$ is mapped by $J$ and $J_P$. Image from \cite{IntegralGeometry:2015} }
\label{fig:kaehler angle}
\end{figure}

If $Q$ is a subspace of $V$ we define $\pi_Q$ as the orthogonal projection on $Q$. With this we can set up the following

\begin{lemma}
Let $X,Y$ be a positively oriented basis of a 2-dimensional real plane P in a complex vector space V. If $\eta_P\notin \{0,\pi\}$ then $\exists$ 2-dimensional real subspace $Q \perp P$ such that $P\oplus Q=P+JP$ and a orientation preserving orthogonal matrix $A$ with $\pi_QJ|_Q=\sin\,\eta_PA$ and

$$J|_{P+Q}= \left( \begin{array}{cc} \cos\eta_P \ J_P & -\sin\eta_P \ A^* \\ \sin\eta_P \ A &-\cos\eta_P \ J_Q \end{array} \right)  $$
\end{lemma}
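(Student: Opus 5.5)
We construct $Q$ explicitly from the defect of $JP$ from $P$. Set $\eta:=\eta_P$, write $c=\cos\eta$, $s=\sin\eta$, and note $s>0$ because $\eta\notin\{0,\pi\}$. By Theorem \ref{thr:plane_bound}, $\pi_P JX = cY$ and $\pi_P JY=-cX$. So the components of $JX, JY$ orthogonal to $P$ are
\[
JX - cY \quad\text{and}\quad JY + cX .
\]
Since $J$ is orthogonal, $|JX|=1$, hence $|JX-cY|^2 = 1-c^2 = s^2$, and likewise $|JY+cX|=s$. We then define
\[
U := \tfrac{1}{s}(JX - cY), \qquad W := \tfrac{1}{s}(JY+cX),
\qquad Q:=\mathrm{span}(U,W).
\]
By construction $Q\perp P$ and $Q\subset P+JP$.

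\textbf{Orthonormality of $U,W$.} Expanding,
\[
\langle U,W\rangle = s^{-2}\bigl(\langle JX,JY\rangle + c\langle JX,X\rangle - c\langle Y,JY\rangle - c^2\langle Y,X\rangle\bigr) = 0,
\]
using $\langle JX,JY\rangle=\langle X,Y\rangle=0$ and $a\perp Ja$. Thus $U,W$ is an orthonormal basis of $Q$, so $\dim Q=2$. Since $P\perp Q$ and $P+JP$ has real dimension at most $4$, we get $P\oplus Q=P+JP$. This space is $J$-invariant because $J(JP)=P$, so $J|_{P+Q}$ is a map of $P\oplus Q$ to itself and has a $2\times2$ block form.

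\textbf{Computing the blocks.} The block $P\to P$ is $\pi_PJ|_P=cJ_P$ by Theorem \ref{thr:plane_bound}. The block $P\to Q$ is $\pi_QJ|_P$, and by definition of $U,W$ it sends $X\mapsto sU$ and $Y\mapsto sW$. So it equals $sA$, where $A:P\to Q$, $X\mapsto U$, $Y\mapsto W$, is orthogonal; $A$ is orientation preserving once $Q$ is oriented by $(U,W)$. (The statement writes this block as $\pi_QJ|_Q$; I read it as $\pi_QJ|_P$, which is what the matrix requires.)

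For the two blocks with domain $Q$, apply $J$ to the basis of $Q$:
\[
JU = s^{-1}(-X - cJY) = s^{-1}\bigl(-X - c(sW - cX)\bigr) = -sX - cW,
\]
and similarly $JW = -sY + cU$. Reading off components, the block $Q\to P$ sends $U\mapsto -sX$ and $W\mapsto -sY$, which is $-sA^{*}$ since $A^{*}=A^{-1}$. The block $Q\to Q$ sends $U\mapsto -cW$ and $W\mapsto cU$, which is $-cJ_Q$ with $J_Q U=W$, $J_QW=-U$. This gives exactly the claimed matrix.

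The main obstacle is bookkeeping rather than ideas: the orientation of $Q$ has to be chosen as $(U,W)$ so that $A$ and $J_Q$ come out with the signs in the statement, and the substitution $JY=sW-cX$ inside $JU$ must be done carefully. As a consistency check, $J^2=-\mathrm{Id}$ applied to the block matrix should give $c^2+s^2=1$ on the diagonal blocks.
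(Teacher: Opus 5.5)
Your proof is correct and is essentially the paper's argument: in both, $Q$ is the orthogonal complement of $P$ inside the $4$-dimensional space $P+JP$, and the factor $\sin\eta_P$ comes from $1=|JX|^2=\cos^2\eta_P+|\pi_Q JX|^2$. The difference is only one of explicitness. You write down the orthonormal basis $U=(JX-\cos\eta_P\,Y)/\sin\eta_P$, $W=(JY+\cos\eta_P\,X)/\sin\eta_P$ and compute the two blocks with domain $Q$ directly, which the paper only asserts ``since $J$ is orthogonal''; your orientation of $Q$ by $(U,W)$ is exactly the paper's convention $J_Q=A\circ J_P\circ A^*$, and your reading of $\pi_QJ|_Q$ as $\pi_QJ|_P$ agrees with what the paper's proof actually uses.
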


\begin{proof}
If $\eta_P\notin\{0,\pi\}$ then $JP\cap P\neq P $ and $JP+P$ is 4 dimensional real linear space with in inherited Euclidean product. So we can find an orientation preserving orthogonal basis $X,Y,Z,W$ of $JP+P$ such that $X,Y\in P$ and $Z,W\in P^{\perp}$. $Q:=span(Z,W)$. Then if $b\in P$ with unit length we get 

$$1=|Jb|^2=|\pi_P(Jb)|^2+|\pi_Q(Jb)|^2=\cos^2\eta_P+\underbrace{|\pi_Q(Jb)|^2}_{\Rightarrow =\sin^2\eta_P}$$

Thus $\pi_Q\circ J|_P=\sin\,\eta_PA$ for some orthogonal map $A:P\rightarrow Q$ with $J_Q=A\circ J_P\circ A^*$ since $J$ is orthogonal. Thus $J_{P+Q}$ is an endomorphism and can be written in matrix form as such that for $Z\in P\oplus Q \Rightarrow$

$$J|_{P+Q}(Z)=  \left( \begin{array}{cc} \cos\eta_PJ_P & -\sin\eta_PA^* \\ \sin\eta_PA &-\cos\eta_PJ_Q \end{array} \right) \cdot \left( \begin{array}{c} \pi_P(Z)\\ \pi_Q(Z) \end{array} \right)$$

\end{proof}

Hyperplanes in $\mathbb{CP}^n$ can be oriented and therefore we can define the signed intersection of hyperplanes with surfaces. The canonical orientation of $\mathbb{C}^{n+1}$ can be given by taking any basis $z_1,...,z_{n+1}$ and defining the orientation of the real $\mathbb{R}^{2n+2}$ space by the n-form $z_1\wedge Jz_1\wedge ...\wedge z_{n+1}\wedge Jz_{n+1}$.

Let, $M$ be an 2-dimensional oriented surface. $f:M\rightarrow \mathbb{CP}^n$. For $p\in M$ and $X,Y\in \mbox{T}_pM$  positive oriented we let $d_pf(X),d_pf(Y)$ span the two dimensional real plane inside $\mbox{T}_p\mathbb{CP}^n$. We now extend $d_pf(X)\wedge d_pf(Y)$ by $v_1\wedge ...\wedge v_{2n-2}$ with a positively oriented basis $\{v_1,….,v_{2n-2}\}$of the hyperplane  $h=span(v_1,…,v_{2n-2})$ that intersects $f(p)$ to determine the sign of the intersection. If $span(d_pf(X),d_pf(Y) \cap h \neq \emptyset$ It can only acquire the values $\pm 1$, and $0$ if there is no intersection. If $d_pf(x),d_pf(x)$ lie inside a hyperplane then the sign is not defined. However, for our purpose this will be of no concern since the set of hyperplanes that do that form a null set and take no effect in the later integral. We denote the sign of the intersection by $\#_{\pm}(h\cap f(p))$ and $\#_{\pm}(h\cap f(M))=\sum_{p\in M}\#_{\pm}(h\cap f(p))$.

And now we can see that the orientation of the intersection depends on the K\"ahler angle. To see that 

$$d_pf(X),d_pf(Y),v_1, ...,v_{2n-2}$$\label{coseta_P sign}

is positive oriented we need to know if $\langle(Jd_pf(X),d_pf(Y)\rangle=\cos\, \eta_P$ has a positive sign or not for $P=span_{\mathbb{R}}(d_pf(X),d_pf(Y))$ because if $\cos\,\eta_P>0$, then $Jd_pf(X),d_pf(Y)$ point in the similar direction as $\{z,Jz\}$ would in the canonical orientation that we defined above. If $\cos\,\eta_P<0$ we have a negative orientation more like $\{z,-Jz\}$.

\section{Average Intersection with Hyperplanes}\label{Average intersection with hyperplanes}

\quad Now the times has come to unite the section about transformation formulas together with the knowledge on K\"ahler angles. This section will dedicate itself exclusively to the proof of a crucial theorem needed to find an expression for the expected sum of indices on a face. We separate this from the rest due to it being the integral geometric core of this thesis.

\begin{theorem}
Let $f:M\rightarrow \mathbb{CP}^k$ be an embedding of a oriented Riemannian surface. $2\leq k\in\mathbb{N}$. Let $H_f:=\{\text{hyperplanes } h\subset \mathbb{CP}^k \  \text{ that intersect } \ \mbox{img}(f)\}$ and $dA$ be the surface area element of the surface. Then

$$\int_{h\in H_f}\#_{\pm}(h\cap f(M)) dK=\dfrac{1}{2\pi}\int_M\cos\,\eta(f(p)) dA$$

where $\eta(f(p))$ is the K\"ahler angle of the real plane spanned by $d_pf(X),d_pf(Y)$ for a positve oriented orthogonal basis $X,Y$ of $\mbox{T}_pM$.
\end{theorem}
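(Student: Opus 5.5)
We prove this with a Crofton-type double-fibration argument, computing the integral on the incidence variety in two ways via Theorem \ref{thr: Signed manifold transformation formula}. Identify the space of hyperplanes of $\mathbb{CP}^k$ with $\mathbb{CP}^k$ itself via $[\psi]\mapsto[\psi^\perp]$; this is the identification already used in Lemma \ref{zero correspondence}. Let $dK$ be the normalized Fubini--Study volume form on this dual $\mathbb{CP}^k$. Form the incidence manifold
\[
I:=\{(p,[\psi])\in M\times\mathbb{CP}^k \ : \ f(p)\in[\psi^\perp]\}.
\]
For fixed $p$, the fibre is the set of $[\psi]$ with $\psi\perp f(p)$, which is a copy of $\mathbb{CP}^{k-1}$. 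Hence $I$ is a fibre bundle over $M$ of real dimension $2+2(k-1)=2k$, the same dimension as the hyperplane space. Consider the projection $\pi_2:I\rightarrow\mathbb{CP}^k$, $(p,[\psi])\mapsto[\psi]$. Its fibre over $[\psi]$ is exactly the set of intersection points of $f(M)$ with the hyperplane $h=[\psi^\perp]$. With the orientations of Section \ref{chap:kaehler section}, the local degree of $\pi_2$ at $(p,[\psi])$ is the intersection sign $\#_{\pm}(h\cap f(p))$. Therefore $\nu^s_{\pi_2}([\psi])=\#_{\pm}(h\cap f(M))$, and Theorem \ref{thr: Signed manifold transformation formula} gives
\[
\int_{h\in H_f}\#_{\pm}(h\cap f(M))\,dK=\int_I \pi_2^*\,dK .
\]
Hyperplanes not meeting $f(M)$ contribute zero, and tangential intersections form a null set.

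The second step is to integrate $\pi_2^*dK$ over $I$ fibrewise along $\pi_1:I\rightarrow M$. This produces a density $\rho(p)\,dA$ on $M$, with $\rho(p)=\int_{\pi_1^{-1}(p)}\pi_2^*dK$ suitably contracted with $X,Y$. By the unitary invariance of the Fubini--Study metric, we may move $f(p)$ to $[e_0]$ and $d_pf(X),d_pf(Y)$ to orthonormal vectors in $T_{[e_0]}\mathbb{CP}^k\cong e_0^\perp$. This is legitimate because $\rho$ depends only on the first-order data of $f$ at $p$ and is pointwise. By the preceding Lemma, up to a unitary transformation the tangent plane $P$ is determined by its Kähler angle $\eta_P$ alone. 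We therefore reduce to the standard model $P=\mathrm{span}(u,\ \cos\eta_P Ju+\sin\eta_P w)$ with $u\perp w$ unit vectors in $e_0^\perp$ and $w\perp Ju$. It remains to show $\rho=\tfrac{1}{2\pi}\cos\eta_P$.

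The third step, which is the main computation and the main obstacle, is to evaluate $\rho$ in this model. For $[\psi]$ in the fibre, meaning $\psi\perp e_0$, move $p$ infinitesimally along $X$ and $Y$ with $\psi$ constrained to stay in $I$. The pulled-back form then becomes a Jacobian: it is the $2\times2$ determinant of the map $T_pM\rightarrow\mathbb{C}$ given by $v\mapsto\langle\psi,d_pf(v)\rangle_{\mathbb C}$, viewed as a real $2\times2$ matrix, times the volume of the fibre $\mathbb{CP}^{k-1}$. Write $a=\langle\psi,u\rangle_{\mathbb C}$ and $b=\langle\psi,w\rangle_{\mathbb C}$. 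The real determinant is then $\mathrm{Im}\big(\bar a(\cos\eta_P\, i a+\sin\eta_P b)\big)=\cos\eta_P|a|^2+\sin\eta_P\,\mathrm{Im}(\bar a b)$. Averaging over unit $\psi\in e_0^\perp$, the cross term $\mathrm{Im}(\bar a b)$ integrates to zero by the symmetry $\psi\mapsto$ a unitary map fixing $u$ and sending $w\mapsto -w$. What survives is $\cos\eta_P$ times the average of $|a|^2$ against the induced measure. This is the same cancellation of the non-holomorphic part seen in Section \ref{section:Random Section Smoothing}: only the complex-line component $\cos\eta_P$ of the tangent plane is seen with sign.

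The remaining step is to fix the constant. Check it on a holomorphic line $\mathbb{CP}^1\subset\mathbb{CP}^k$, where $\eta\equiv0$ and every generic hyperplane meets it exactly once positively. The left side is then the total $dK$-measure of hyperplanes, normalized to match. The right side is $\tfrac{1}{2\pi}\mathrm{Area}(\mathbb{CP}^1)$, and the normalization of $dK$ is chosen to make these agree; this fixes the factor $\tfrac{1}{2\pi}$ and completes the identity.
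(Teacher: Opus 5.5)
Your proposal is correct and follows the paper's proof. Your incidence manifold $I$ with projection $\pi_2$ is the paper's bundle $E$ with the map $\phi(p,[\psi])=[|\xi|^2\psi-\langle\xi,\psi\rangle_{\mathbb C}\xi]$, and you use the same signed transformation formula, the same Jacobian $\det\langle d\xi,\psi\rangle_{\mathbb C}$ (up to conjugation) decomposed by the K\"ahler angle, and the same normalization on a projective line $\mathbb{CP}^1$. The one place you differ is to your credit: the cross term $\sin\eta_P\,\mathrm{Im}(\bar a b)$ does not vanish pointwise as the paper asserts, and your argument that it integrates to zero over the fibre, via the unitary symmetry $w\mapsto -w$, is the correct justification.
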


\begin{proof}
In a nutshell this is a special case of the signed manifold transformation formula of theorem (\ref{thr: Signed manifold transformation formula}). To see this we need to find a parametrization $\phi:E\rightarrow H$ of some set $E$ such that the image $\mbox{img}(\phi|_E)$ is the set of hyperplanes that intersect the surface. Remember that when we defined the signed intersection that we mentioned that there could be a problem if $d_pf(X),d_pf(Y)$ are inside a hyperplane? This won't be a problem here since the set of hyperplanes that may do so form a null set and can be ignored due to the integral later. 

Remember that we can identify any hyperplane in $\mathbb{CP}^k$ with $\mathbb{CP}^k$ itself.  A suitable set for the domain of $\phi$ is $E=f^*\mbox{P}(\mbox{T}\mathbb{CP}^k)$ which is the pull back bundle of $\mathbb{CP}^k$ by $f$ that has been projectivised by the natural $\mathbb{C}$-projectivation $\mbox{P}$. If $U\subset M$ we write

$$E_U:=\{ \ \{p\}\times \mbox{P}(T_{f(p)}\mathbb{CP}^k) \ : p\in U\}, \ \ E:=E_M, \ \ E_p:= \{p\}\times \mbox{P}(T_{f(p)}\mathbb{CP}^k)$$

We use this set to define the function $\phi : E\rightarrow H$ as:

$$\text{for } 0\neq\xi\in f(p)\in \mathbb{CP}^k \ \ \psi\in\mathbb{C}^{k+1}/f(p) \ \ \phi(p,[\phi]):=[|\xi|^2\psi-\langle\xi , \psi\rangle_{\mathbb{C}}\xi ]$$

where we compute $\langle\xi , \psi\rangle_{\mathbb{C}}$ in homogeneous coordinates which works fine since we can easily verify that the full expression is well defined. The definition of $\phi$ aims to be suitable for the signed manifold transformation formula.

Lets make sure that $\phi(E)=H_f$.

``$\subseteq$`` If $\tau \in \phi(E) \Rightarrow \exists p\in M, \xi\in f(p), \psi\in\mathbb{C}^{n+1}/f(p) : \tau=[\hat{\tau}]=[|\xi|^2\psi-\langle\xi , \psi\rangle_{\mathbb{C}}\xi] \Rightarrow$. To see that $f(M)\cap \tau\neq\emptyset$ we can w.l.o.g. take $\xi\in f(p)$ again and compute the scalar product

$$\langle |\xi|^2\psi-\langle\xi , \psi\rangle_{\mathbb{C}}\xi , \xi \rangle = |\xi|^2\langle\psi,\xi\rangle-\langle\xi , \psi\rangle_{\mathbb{C}}\langle\xi , \xi \rangle = 0$$

``$\supseteq$`` If we are given $h\in H_f \Rightarrow \exists p\in M: h\cap f(p)\neq\emptyset$. So if $\xi\in f(p)$ and $[\hat{h}]^\perp=h$ then $\langle\hat{h},\xi\rangle_{\mathbb{C}}=0$. W.l.o.g. pick $|\xi|=1$ and $\psi=\hat{h}\in\mathbb{C}^{k+1}$ and

$$[\hat{h}]=[1^2\hat{h}-\overbrace{\langle \xi,\hat{h}\rangle_{\mathbb{C}}}^{=0}\xi]=|\xi|^2\psi-\langle\xi ,\psi\rangle_{\mathbb{C}}\xi \ \ \ \Rightarrow \ \ \ h\in \mbox{img}(\phi|_E)$$

thus $h$ is in $T_{f(p)}\mathbb{CP}^n$ and this concludes why $\phi(E)=H$. Our goal would be to be able to apply theorem (\ref{thr: Signed manifold transformation formula}) to get the following chain of equalities:

\begin{equation}\label{eq:dream equality}
\int_{h\in H|_f}\#_{\pm}(h\cap f(M)) dK\stackrel{!}{=}\int_{\phi(E)}\nu_{\phi}^s\ \omega_{\mathbb{CP}^k}=\int_E\phi^*\omega_{\mathbb{CP}^k}=\int_M\int_{E_p}\det(d_p\phi)dA\wedge\omega_{\mathbb{CP}^{k-1}}
\end{equation}

But we can only show ``$\stackrel{!}{=}$`` after further computations, namely the computation of $\det(d\phi)$.

Let $\epsilon >0,\ a\in \mathbb{R}$. The differential of a curve $\gamma : (a-\epsilon,a+\epsilon) \rightarrow \mathbb{CP}^k$ at $a$ has to be independant of any scalar multiplication $\lambda:(a-\epsilon,a+\epsilon)\rightarrow \mathbb{C}$ if $\gamma=[\lambda\hat{\gamma}]$. So 

$$\gamma' \text{ has to be equivalent to } [\lambda'\gamma + \lambda\gamma'].$$

This is why we see differentials as an elements of $\mbox{Hom}_{\mathbb{C}}(\gamma(a),\mathbb{C}^{k+1}/\gamma(a))$ which we then identify with $\mbox{T}_a\mathbb{CP}^k$. We denote the differential of $\gamma$ at $a$ by $\dot{\gamma}(a)$.

In order to differentiate $\phi$ we need to write $\phi$ into a form that is more suitable for differentiating by using coordinates in a local neighbourhood $U \subseteq M$ around a fixed point $q$ and a lift into $\mathbb{C}^{k+1}$. Define $H_p:=\{x\in \mathbb{CP}^k \ : \ f(p) \perp x\}=f(p)^{\perp}$ and let $\xi:U\rightarrow \psi+H_q$. In the following the point $q$ is fixed and $p$ is the variable.

$$\hat{\phi}:U\times H_p \rightarrow \mathbb{C}^{n+1}, \ \ \ \hat{\phi}(p,\psi):=|\xi(p)|^2\psi-\langle\xi(p) , \psi\rangle_{\mathbb{C}}\xi(p) $$

With $\xi(p) \in f(p).\ |\xi(q)|=1$. We constructed it this way to earn the commutation of $\phi\circ\pi_U=[\hat{\phi}]$.

$$\begin{array}{ccc} U\times H & \xrightarrow[\hspace{3em}]{\hat{\phi}} & \mathbb{C}^{n+1} \\ \pi_{U} \xdownarrow{3ex}   & & \xdownarrow{3ex} [.]\\ E|_U & \xrightarrow[\hspace{3em}]{\phi} & \mathbb{CP}^n \end{array}$$

Differentiating $\hat{\phi}$ with a curve $(p(t),\psi(t))$ such that $p(0)=q$,  $\psi(0)=\psi$ and indicating this with a dot $\dot{p},\dot{\psi}$ will lead to

\begin{align*} 
d\hat{\phi}(\dot{p},\dot{\psi}) &=d(\overbrace{\langle\xi(p),\xi(p)\rangle}^{=|\xi|^2}\psi)-d(\langle\xi(p) , \psi\rangle_{\mathbb{C}}\xi(q))\\
&=2\underbrace{\langle d\xi(\dot{p}),\xi(q)\rangle}_{=0}\psi+\underbrace{\langle \xi(q),\xi(q)\rangle}_{=1}\dot{\psi}\\
&\ \ - \langle d\xi(\dot{p}) , \psi\rangle_{\mathbb{C}}\xi(q) - \underbrace{\langle \xi(q) , \dot{\psi}\rangle_{\mathbb{C}}}_{=0}\xi(q)\\
&\ \ - \underbrace{\langle \xi(q) , \psi\rangle_{\mathbb{C}}}_{=0}d\xi(\dot{p})\\
&= \dot{\psi} - \langle d\xi(\dot{p}) , \psi\rangle_{\mathbb{C}}\xi(q)
\end{align*}

where the underbraces are evident due to $\psi,\dot{\psi},d\xi(\dot{p})\in\xi(q)^{\perp}$. Hence the linear map $d\hat{\phi}:\mbox{T}_qU \oplus H_q \rightarrow H_q^{\perp}\oplus H_q$ must have the form

$$\Bigl( \begin{array}{cc} -\langle d\xi(\dot{p}) , \psi\rangle_{\mathbb{C}} & 0 \\ 0 & id_{H_q} \end{array} \Bigr), \ \Rightarrow \ \det d\hat\varphi = \det \langle d\xi, \psi\rangle_{\mathbb C}$$

Thanks to \cite{IntegralGeometry:2015} we can also make a statement on the following isometry:

\begin{displayquote}
\say{Since the Fubini-Study metric on $\mathrm{T}_q \mathbb C\mathrm P^k = \mathrm{Hom}_{\mathbb C} \bigl(f(q), \mathbb C^{k+1}/f(q)\bigr)= \mathrm{Hom}_{\mathbb C} (f(q), H)$ is given by the Frobenius norm and $|\phi_0|=1$, it coincides with the metric on H. Thus if furthermore $|\psi|=1$ the same argument shows that the maps in the following diagram are isometric:}
\end{displayquote}

$$\begin{array}{ccc} \mbox{T}_qM\oplus H_q & \xrightarrow[\hspace{3em}]{d\hat{\phi}} & \mbox{T}_{\hat{\phi}(q,\psi)}\mathbb{C}^{k+1} \\ \pi_{\mbox{T}_qM} \xdownarrow{3ex}   & & \xdownarrow{3ex} [.]\\ E_{(q,\phi)} & \xrightarrow[\hspace{3em}]{\phi} & \mbox{T}_{(\phi(q,[\phi])}\mathbb{CP}^k \end{array}$$

Thus we conclude that
$$\left.\varphi^\ast \omega_{\mathbb C\mathrm P^k}\right|_{(q,[\psi])} = \det \langle d_q\xi, \psi\rangle_{\mathbb C}\, \left.dA \wedge \omega_{\mathbb C\mathrm P^{k-1}}\right|_{(q,[\psi])}$$

Now we can apply theorem (\ref{thr:plane_bound}) for $P=\mathrm{img}\, d_q\xi$ to compute $\det \langle d_q\xi, \psi\rangle_{\mathbb C}$. First note that

\begin{align*}
\langle J_P d_q\xi ,J \psi\rangle &= \langle J_P d_q\xi ,\pi_P J(\pi_P(\psi)+\pi_Q(\psi))\rangle \\ & = \cos\eta_P\, \langle J_P d_q\xi ,J_P\pi_P(\psi)\rangle - \sin\eta_P\,\langle J_P d_q\xi ,A^\ast\pi_Q(\psi)\rangle\\ & = \cos\eta_P\, \langle d_q\xi ,\psi\rangle - \sin\eta_P\,\langle A\, J_P d_q\xi ,\psi\rangle
\end{align*}

and consequently if we transform $\psi\mapsto J\psi$ we get.

$$\langle J_P d_q\xi ,\psi\rangle = -\cos\eta_P\, \langle d_q\xi ,J\psi\rangle + \sin\eta_P\,\langle A\, J_P d_q\xi ,J\psi\rangle$$

To compute the determinant we take any orthogonal basis and derive the determinant from its representative matrix. Thus, for some $X=\dot{p}\in \mbox{T}_p \mathrm M$ such that $|X|=1$, we obtain the basis $\{X,JX\}$ and compute

\begin{align*}
\det \langle d_q\xi, \psi\rangle_{\mathbb C} &= \langle J_P d\xi(X),\psi\rangle \langle d\xi(X),J\psi\rangle - \langle d\xi(X),\psi\rangle\langle J_P d\xi(X),J\psi\rangle\\
&= (-\cos\eta_P\, \langle d\xi(X) ,J\psi\rangle + \sin\eta_P\,\langle A\, J_P d\xi(X) ,J\psi\rangle\bigr)\langle d\xi(X),J\psi\rangle\\ & \ \ \ - \langle d\xi(X),\psi\rangle \bigl(\cos\eta_P\, \langle d\xi(X) ,\psi\rangle - \sin\eta_P\,\langle A\, J_P d\xi(X) ,\psi\rangle\bigr)\\ &= -\cos\eta_P\, \bigl(\langle d\xi(X),\psi\rangle^2 + \langle d\xi(X),J\psi\rangle^2)\\ & \ \ \ + \sin\eta_P\,\bigl(\langle\psi,A\, J_P d\xi(X)\rangle\langle d\xi(X),\psi\rangle + \langle J\psi,A\, J_P d\xi(X)\rangle\langle d\xi(X),J\psi\rangle\bigr).
\end{align*}

Let $\pi_Q, \pi_{[\psi]}$ denote the orthogonal projections to Q and the complex subspace $[\psi]$. Since $[\phi]=span_{\mathbb{R}}(\phi,J\phi)$ we see that

$$\langle d\xi(X),\psi\rangle^2 + \langle d\xi(X),J\psi\rangle^2 = \left|\langle d\xi(X),\psi\rangle\dfrac{\phi}{||\phi||} + \langle d\xi(X),J\psi\rangle\dfrac{J\phi}{||J\phi||}\right|^2 = |\pi_{[\phi]}( d\xi(X))|^2$$

If we replace $\sin\, \eta_P A$ with $\pi_QJ$ then

\begin{align*}
\det \langle d_q\xi, \psi\rangle_{\mathbb C} &= -\cos\eta_P\, |\psi|^2|\pi_{[\psi]}( d\xi(X))|^2\\& \ \ \  + \langle\psi,\pi_Q J J_P d\xi(X)\rangle\langle d\xi(X),\psi\rangle + \langle J\psi,\pi_Q J J_P d\xi(X)\rangle\langle d\xi(X),J\psi\rangle
\end{align*}

Notice how $\langle \psi,\pi_Q J J_P d\xi(X)\rangle\langle d\xi(X),\psi\rangle\equiv0$ everywhere since in any situation, $\psi\in P,Q,(P\oplus Q)^{\perp}$, at least one term in the product will vanish.

So finally we can come back to the original  computation we have been trying to make for a long time. We divide the integral over the pullback bundle $E$ up into each base point $p\in M$ and its respective tangent space $E_p$.

\begin{align*}
\int_{\mathrm E} \phi^\ast\omega_{\mathbb C\mathrm P^k} &= \int_{\mathrm M} \Bigl(\int_{\mathrm E_p} \det\langle d_p\xi,\psi\rangle_{\mathbb C}\, \omega_{\mathbb C\mathrm P^{k-1}}\Bigr)\, dA\\
&= \int_{\mathrm M} \cos\eta \underbrace{\Bigl(\int_{\mathrm E_p} |\psi|^2|\pi_{[\psi]}( d\xi(X))|^2  \omega_{\mathbb C\mathrm P^{k-1}}\Bigr)}_{c:=} dA\\
&= c\int_{\mathrm M} \, \cos\eta\, dA
\end{align*}

where c is a constant coming from the fact that the integral

$$\int_{\mathrm S^{n-1}}|\psi|^2|\pi_{[\psi]}(\varrho)|^2$$

is independent of the choice of $\varrho\in \mathrm S^{n-1}$. Now we finally have the computations visible to easily see that if $p\in M , \ \ h\in \phi(E_p) \Rightarrow$ the signed intersection $\#_{\pm}(h\cap f(p)) = \nu_{\phi}(p)$ almost everywhere. See section (\ref{coseta_P sign}, page: \pageref{coseta_P sign}) to verify how the sign of $\cos\,\eta_P$ affects the sign of the intersection to finally allow us to proof the ``$\stackrel{!}{=}$`` equality of equation (\ref{eq:dream equality}). 

$$\int_{h\in H|_f}\#_{\pm}(h\cap f(M)) dK=\int_{\phi(E)}\nu_{\phi}^s\ \omega_{\mathbb{CP}^k}$$

The idea behind it is the following: if the hyperplane $h$ cuts $f(p)$ in $i\in\mathbb{N}$ points, then $E$ will cover the same hyperplanes $i$ times. Additionally, the sign of a single intersection coincides as seen here: 

$$\#_\pm(h\cap f(p)=\mbox{sign}(\cos\,\eta(f(p))=\mbox{sign}(\det(d_p\hat{\phi}))$$ 
Now, since the kinematic measure is invariant under rotations, $dK$ is a constant multiple of $\omega_{\mathbb C\mathrm P^n}$ and there is a universal constant C such that

$$\int_{\{h\mid h\cap f(M) \neq \emptyset\}} \#_\pm(h\cap f(M))\,dK = C\int_{\mathrm M} \, \cos\eta(f(p))\, dA$$

As C is universal we can compute it by a specific example: Let $f\colon \mathbb C\mathrm P^1\to \mathbb C\mathrm P^n$ be some projective line. Then the K\"ahler angle is 0 everywhere. Since any two distinct projective hyperplanes have exactly one positive intersection point we have

\begin{align*}
1 = \int_{h} dK &= \int_{\{h\mid h\cap f(\mathbb{CP}^1) \neq \emptyset\}} \#_\pm(h\cap f(\mathbb{CP}^1))\,dK\\ &= C\int_{\mathbb C\mathrm P^1} \, \underbrace{\cos\eta(f(p))}_{=1}\, dA\\ &= \mathrm{Area}(\mathbb C\mathrm P^1)\cdot C\\ &= 2\pi\, C \ \ \Rightarrow C= \tfrac{1}{2\pi}
\end{align*}

Which concludes the proof of the theorem.

\end{proof}

This concludes the tool for the integral geometric approach. The application will be handled in section (\ref{bringing together}).

\chapter{Solution and Discussion}\label{chap:Solution and Discussion}

\quad Now that we have worked so much on tools and theorems to work with we will have to put them all together in this section to finally express the average sum of indices. 

\section{Bringing Everything Together}\label{bringing together}

\begin{definition}
Let $\triangle$ represent the triangle spanned by the 3 neighboring vertices $i,j,k\in M$ where $M$ is once again a closed simplicial complex in $\mathbb{R}^3$. Let $L$ be a discrete hermitian line bundle on $M$ and $n\in\mathbb{N}$ be the number of vertices. $f:\triangle\rightarrow\mathbb{CP}^{n-1}$ be an embedding, then for a section $\Gamma(L)$ we define its index

$$\hat{I}_{ijk}(f):= \int_{\{h^\perp \mid h\in\mathbb{CP}^{n-1}\}} \#_\pm(h^\perp\cap f(\triangle))\,dK$$

and thus the index density as

$$\hat{Z}_{ijk}(f):= \dfrac{\hat{I}_{ijk}(f)}{\mbox{Area}(ijk)}$$

\end{definition}

If $f$ is the Kodaira correspondence we interpret $\hat{I}_{ijk}(f)$ as the sum of indices. The justification for the interpretation of this definition was established in lemma (\ref{zero correspondence}).




    
We now still want to derive a useful way of writing $\hat{I}_{ijk}$ using all of the above results. 


\begin{align}\label{big equality}
\hat{I}_{ijk}(f) &=\int_{\{[\psi]^\perp \mid \psi\in\mathbb{C}^{n}\}} \#_\pm(h\cap f(\triangle))\,dK\\
&=\int_{\{h\mid h\cap f(\triangle) \neq \emptyset\}} \#_\pm(h\cap f(\triangle))\,dK \\
&=\tfrac{1}{2\pi} \int_{\triangle} \, \cos\eta(f(p))\, dA \ \ m^2\\
&=\tfrac{1}{2\pi}  \int_{\triangle } \, f^*\sigma\, dA \\
&=\tfrac{1}{2\pi}(\tfrac{1}{2}\omega_{f\circ\triangle} + \pi l)\\
\end{align}

Lets look at what we did for each equation step by step respectively:

\begin{enumerate}
\item We applied the definition of $\hat{I}_{ijk}(f)$.
\item Remove unnecessary hyperplanes from the integral since $\#_\pm(\emptyset)=0$.
\item This was the subject of the theorem in (\ref{Average intersection with hyperplanes}) and replacing $k$ by $n-1$.
\item Definition of K\"ahler form.
\item Here we used the equation (\ref{eq:omega_triangle}) and defined $l\in\mathbb{Z}$ accordingly. $\omega_{f\circ\triangle}$ is the embedding into a geodesic triangle in $\mathbb{CP}^{n-1}$. From now on we will omit the $f$ there and just just write $\omega_{f\circ\triangle}=\omega_{\triangle}$.
\end{enumerate}

But due to the mod $2\pi\mathbb{Z}$ issue we have to be very careful with the choice of $l$. This arises from equation (\ref{eq:omega_triangle}) in section (\ref{Embedding into}). We have to look further to see what $l$ must be.

However, we have not yet even applied the smoothing operator $S_t$ in these equations. Without applying the smoothing the result is fairly boring and $\omega_\triangle$ won't be determinable by equation (\ref{eq:omega_triangle computation}). We can use the above setting and modify it a little to compute the zeros of the smoothed sections.

Remember from lemma (\ref{zero correspondence}) that the zeros of a section $\phi\in\Gamma(L)$ can be identified with the intersection points of the Kodaira correspondence embedding of the surface $M$ into $\mathbb{CP}^{n-1}$ and the hyperplane $[\phi]^\perp$. For any $t\geq0$ this means that for the smoothed section $S_t\phi$ we know (homogeneous coordinates wherever necessary): 

$$p\in \mbox{ker}\,S_t\phi\Leftrightarrow \langle f(p),S_t\phi\rangle=0\Leftrightarrow \langle S_tf(p),\phi\rangle=0$$

And with the discrete Kodaira correspondence on each vertex $i\in m$ we have $f(i)=[\delta_i]$. Remember that the main theorem from (\ref{Average intersection with hyperplanes}) did not require $f$ to be the Kodaira correspondence, which is why we can replace $f\mapsto [S_tf]$  and still use that theorem and all of the equalities in (\ref{big equality}). Thus:

\begin{align*}
& \ \ \ \arg(\langle\!\langle f(i),f(j)\rangle\!\rangle_{\mathbb C}\langle\!\langle f(j),f(k)\rangle\!\rangle_{\mathbb C}\langle\!\langle f(k),f(i)\rangle\!\rangle_{\mathbb C})
\\ &= \arg(\langle\!\langle S_t\delta_i,S_t\delta_j\rangle\!\rangle_{\mathbb C}\langle\!\langle S_t\delta_j,S_t\delta_k\rangle\!\rangle_{\mathbb C}\langle\!\langle S_t\delta_k,S_t\delta_i\rangle\!\rangle_{\mathbb C}) \stackrel{!}{=} \omega_\triangle^t
\end{align*}

Where we just defined $\omega_\triangle^t$ in analogy to $\omega_\triangle$ but for the geodesic triangle by the vertices $[S_t(i)],[S_t(j)],[S_t(k)]$. It would be nice to use the equation (\ref{eq:omega_triangle computation}) and just continue the big equality (\ref{big equality}) by replacing $\omega_\triangle^t$ with the above equality. However, due to the mod $2\pi$ problem we have to guess $l\in\mathbb{Z}$ from the equations (\ref{big equality}). Also for $t=0$ we can't determine $\omega_\triangle^0$ as $\arg(0)$ is not trivial. We can shift $\omega_\triangle^t$ to be in $(-\pi,\pi)$, and igrnore the rare equalities with $\pm\pi$ as they form a null set.
We perform the choice of $l\in\mathbb{Z}$ by taking into consideration the result of the discrete approach for $t=0$ in equation (\ref{eq:unsmoothed_result}). The results in both approaches have to be equal and thus we deduce that for $t=0$

$$\hat{I}_{ijk}(S_tf)=\dfrac{\Omega_{ijk}}{2\pi} =\tfrac{1}{2\pi}(\tfrac{1}{2}\omega_\triangle^0 + \pi l) \ \ \ \ \Rightarrow \ \ \  \dfrac{\Omega_{ijk}}{\pi}=\underbrace{\dfrac{1}{2\pi}\omega_\triangle^0}_{-\tfrac{1}{2}<\ \ \ <\tfrac{1}{2}\ } + \underbrace{l}_{\in\mathbb{Z}}$$

$$l=\mbox{round}\left( \dfrac{\Omega_{ijk}}{\pi} \right) \ \ \ \Rightarrow \omega_\triangle^0:= 2(\Omega_{ijk}-\pi l)\ \ \ $$

Where $\mbox{round}:\mathbb{R}\rightarrow \mathbb{Z}$ is the rounding function to the nearest integer. This is only true for $t=0$. We expect that the zeros of sections move continuously on the surface when smoothing the section since $S_t$ is continous. From this we deduce that for small $t$ the chosen value of $l$ will remain the same and we will have to compute $\omega_\triangle^t$ with the above $\arg(\dots)$ expression for $t>0$. This is why we continue the equalities from (\ref{big equality}) as

\begin{align*}\label{smoothed equality}
\hat{I}_{ijk}(S_tf) &=\tfrac{1}{4\pi}\omega_\triangle^t + \dfrac{1}{2}\mbox{round} \left( \dfrac{\Omega_{ijk}}{\pi}\right )\\
\end{align*}

Notice that if we are not given a curvature 2-form $\Omega$ with the geometry, then the connection $\eta$ that we add our self as defined after definition (\ref{discrete hermitian line bundle with curvature}) results in $\Omega_{ijk}\in(-\pi,\pi)$, meaning that $l=0$. 

Next we have to address another problem for increasing values of $t$. $\omega_\triangle^t$ is still trapped inside $(-\pi,\pi)$. If $t$ becomes big enough, the argument of the calculation of the $\omega_\triangle^t$ expression will jump above or below of $\pm\pi$, but this information will be lost in the computation. To fix this we need to define that for $t>0$ that $\omega_\triangle^t$ is not bound by $(-\pi,\pi)$, and that it recognizes if the increase of $t$ just caused the argument to move above $\pi$, and if so, continue without loosing that progress. In other words, we want $\omega_\triangle^t$ to be a continuous function.

Let us look at an example to clear this thought up. If $0<t_1\approx t_2,\ t_1<t_2$ and $\omega_\triangle^{t_1}\approx\pi$ and $\omega_\triangle^{t_2}\approx -\pi$, then the value of $\omega_\triangle^{t_2}$ should be replaced by $\omega_\triangle^{t_2}+2\pi$.

In our discrete computations, the only way to implement this is to increase $t$ bit by bit and compare each new value with his predecessor to determine what should have been.

All in all put together we finally reach the expression we have aimed to get for so long and will express this in the following theorem:

\begin{theorem}[Average number of zeros of smoothed sections]\label{thr:main}

Let $M$ be a closed simplicial complex with $n\in \mathbb{N}$ vertices with a discrete hermitian line bundle $L$ with a discrete curvature $\Omega$ and $t\geq0$. Let $\triangle$ be the triangle spanned by neighboring vertices $i,j,k\in M$ and $f$ be the Kodaira correspondence

Let $\omega_{\triangle}^t$ be a smooth function of $t>0$ such that 

$$\omega_{\triangle}^t=\arg(\langle\!\langle S_t\delta_i,S_t\delta_j\rangle\!\rangle_{\mathbb C}\langle\!\langle S_t\delta_j,S_t\delta_k\rangle\!\rangle_{\mathbb C}\langle\!\langle S_t\delta_k,S_t\delta_i\rangle\!\rangle_{\mathbb C}) \text{ mod } 2\pi$$

and $\omega_\triangle^0= 2\left(\Omega_{ijk}-\mbox{round}\left( \dfrac{\Omega_{ijk}}{\pi} \right)\pi  \right)$.

Then the average number of signed zeros of random sections smoothed by $S_t$ on $\triangle$ $ijk$ is

$$
\hat{I}_{ijk}(S_tf)=\tfrac{1}{4\pi}\omega_\triangle^t + \dfrac{1}{2}\mbox{round} \left( \dfrac{\Omega_{ijk}}{\pi}\right )
$$

The area density is

$$\hat{Z}_{ijk}(S_tf)= \dfrac{\hat{I}_{ijk}(S_tf)}{\mbox{Area}(ijk)}$$

\end{theorem}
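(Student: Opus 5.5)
The plan is to assemble the theorem from the chain of equalities (\ref{big equality}), applied to the smoothed embedding $[S_tf]$ instead of $f$. First I would justify that replacing $f$ by $[S_tf]$ is legitimate. By Lemma \ref{zero correspondence} and the self-adjointness of $S_t$, the smoothed random section $S_t\phi$ vanishes at $p$ exactly when $\langle S_tf(p),\phi\rangle=0$. So zeros of $S_t\phi$ correspond to intersections of the surface $[S_tf](\triangle)$ with the random hyperplane $[\phi]^\perp$. Because $\phi$ is a standard complex Gaussian, $[\phi]$ is uniformly distributed on $\mathbb{CP}^{n-1}$ with respect to the Fubini--Study measure. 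Hence averaging the signed count over $\phi$ equals integrating against the normalized kinematic measure $dK$, and the expected signed number of zeros on $\triangle$ is $\hat{I}_{ijk}(S_tf)$. The main theorem of section \ref{Average intersection with hyperplanes} did not use that $f$ is the Kodaira correspondence. Applied to the geodesic triangle with vertices $[S_t\delta_i],[S_t\delta_j],[S_t\delta_k]$, it gives $\hat{I}_{ijk}(S_tf)=\tfrac{1}{2\pi}\int_\triangle \cos\eta\,dA=\tfrac{1}{2\pi}\int_\triangle (S_tf)^*\sigma$.

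Second, I would evaluate the symplectic area of this geodesic triangle using (\ref{eq:omega_triangle}) and (\ref{eq:omega_triangle computation}). These give $2\int_\triangle\sigma\equiv \arg\bigl(\langle\!\langle S_t\delta_i,S_t\delta_j\rangle\!\rangle\langle\!\langle S_t\delta_j,S_t\delta_k\rangle\!\rangle\langle\!\langle S_t\delta_k,S_t\delta_i\rangle\!\rangle\bigr)$ modulo $2\pi$. Therefore $\hat I_{ijk}(S_tf)=\tfrac{1}{4\pi}\omega^t_\triangle+\tfrac{l}{2}$ for some integer $l=l(t)$. The remaining work is to pin down this integer.

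Third, I would fix $l$ at $t=0$ by comparison with the discrete approach. Equation (\ref{eq:unsmoothed_result}) gives the expected index $\Omega_{ijk}/2\pi$. Equating $\tfrac{1}{4\pi}\omega^0_\triangle+\tfrac l2=\tfrac{\Omega_{ijk}}{2\pi}$ and requiring $\omega^0_\triangle\in(-\pi,\pi)$ forces $l=\mathrm{round}(\Omega_{ijk}/\pi)$. This yields exactly the stated initial value $\omega^0_\triangle=2(\Omega_{ijk}-\pi l)$.

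Finally, I would propagate to $t>0$ by continuity. Both sides depend continuously on $t$: $S_t$ is continuous in $t$, so the embedded triangle varies continuously, and hence so does its symplectic area. The continuous lift $\omega^t_\triangle$ of the $\arg$ expression is chosen precisely to remove the $2\pi$ jumps. So $\hat I_{ijk}(S_tf)-\tfrac{1}{4\pi}\omega^t_\triangle$ is a continuous function with values in $\tfrac12\mathbb{Z}$, hence constant, and it equals its value $\tfrac12\mathrm{round}(\Omega_{ijk}/\pi)$ at $t=0$. Dividing by the area gives $\hat Z_{ijk}$.

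I expect the main obstacle to be this continuity step near $t=0$. At $t=0$ the vertices $\delta_i,\delta_j,\delta_k$ are mutually orthogonal, so the $\arg$ is undefined and the geodesic triangle is not unique. I would handle this by showing that, as $t\to0^+$, the continuous lift $\omega^t_\triangle$ converges to the value prescribed by the curvature $\Omega_{ijk}$. To do so I would expand $\langle\!\langle S_t\delta_i,S_t\delta_j\rangle\!\rangle=\langle\!\langle\delta_i,e^{-2t\Delta}\delta_j\rangle\!\rangle\approx -2t\Delta_{ij}$ to first order. The connection phases in the off-diagonal entries $\Delta_{ij}$ then multiply around the triangle to $e^{\pm i\Omega_{ijk}}$ modulo $2\pi$, which is consistent with the normalization chosen above. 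I would accept the paper's standing conventions for the residual ambiguity at $\pm\pi$, which occurs only on a null set.
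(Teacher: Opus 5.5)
You follow the paper's route exactly: the chain of equalities of Section~\ref{bringing together} applied to $[S_tf]$, the Bargmann phase modulo $2\pi$, fixing the integer at $t=0$ against $\Omega_{ijk}/2\pi$, then continuity in $t$. The problem is that the step you flag as the main obstacle does not close as you claim, and checking it exposes a real error.

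If the first-order phases multiply to $e^{\pm i\Omega_{ijk}}$, as you say, then $\lim_{t\to0^+}\omega^t_\triangle\equiv\pm\Omega_{ijk}\pmod{2\pi}$. But your third step prescribed $\omega^0_\triangle=2(\Omega_{ijk}-\pi l)\equiv2\Omega_{ijk}\pmod{2\pi}$. For generic $\Omega_{ijk}$ these are incompatible: for $\Omega_{ijk}=0.3$ you need $0.6$ but get $\pm0.3$. So no function continuous on $[0,\infty)$ has both the prescribed value at $0$ and the $\arg$ property for $t>0$. Your argument ``continuous, $\tfrac12\mathbb{Z}$-valued, hence constant'' therefore has nothing to anchor to. Letting $t\to0^+$ in $\tfrac1{4\pi}\omega^t_\triangle+\tfrac l2$ gives $\pm\Omega_{ijk}/4\pi$ modulo $\tfrac12\mathbb{Z}$, not $\Omega_{ijk}/2\pi$. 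The paper's proof has the same hole; it only asserts that $l$ persists for small $t$ and never computes this limit.

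The culprit is a factor $2$ in your second step, inherited from the paper. It combines $C=\tfrac1{2\pi}$ (that is, $\mathrm{Area}(\mathbb{CP}^1)=2\pi$) with $\omega_\triangle\equiv2\int_\triangle\sigma$. With $d_F=\arccos(\cdots)$, $\mathbb{CP}^1$ is a sphere of radius $\tfrac12$ and area $\pi$, and $\omega_\triangle\equiv2\int_\triangle\sigma$ holds for that metric. Stated without reference to a normalization: $\hat I=\int_\triangle\sigma\big/\int_{\mathbb{CP}^1}\sigma$, and the Bargmann phase is $2\pi$ times this ratio modulo $2\pi$. Hence $\hat I\equiv\omega_\triangle/2\pi\pmod 1$.

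A quick check confirms this. The points $[1:0],[1:1],[1:i]$ bound an octant of a projective line, which a random hyperplane meets with probability $\tfrac18$, so $\hat I=\pm\tfrac18$. Their Bargmann product is $1\pm i$, so $\omega_\triangle=\pm\tfrac\pi4$. The formula $\tfrac1{4\pi}\omega_\triangle+\tfrac l2=\pm\tfrac1{16}+\tfrac l2$ never equals $\pm\tfrac18$.

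With the corrected relation $\hat I_{ijk}=\tfrac1{2\pi}\omega_\triangle+l$, matching at $t=0$ gives $l=\mathrm{round}(\Omega_{ijk}/2\pi)$ and $\omega^0_\triangle=\Omega_{ijk}-2\pi l$. This does agree with your expansion $\langle\!\langle S_t\delta_i,S_t\delta_j\rangle\!\rangle\approx-2t\Delta_{ij}$, up to orientation and assuming the three cotan weights are positive. Your continuity argument then goes through. However, what it proves is $\hat I_{ijk}(S_tf)=\tfrac1{2\pi}\omega^t_\triangle+\mathrm{round}(\Omega_{ijk}/2\pi)$, not the statement as written.

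Finally, continuity of $\hat I_{ijk}(S_tf)$ in $t$ presupposes a filling of the face that varies continuously in $t$. The boundary alone fixes $\int\sigma$ only modulo $\int_{\mathbb{CP}^1}\sigma$, so the integer is defined by this continuation rather than derived.
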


\section{Results and Visualisation}

\quad Now that we have managed to establish a way to computing the density of indices of random sections of discrete hermitian line bundles on simplicial complexes we can finally briefly discuss the outcome. We will not even try to apply the discrete approach from chapter (\ref{chap:Discrete Index Approach}) as we did not manage to simplfy the integral expression (\ref{eq:probability density}). Instead we will focus only on the final expression as in theorem (\ref{thr:main}).


Lets define for neighboring vertices $ijk$ of a closed simplicial complex $M$ with $n\in\mathbb{N}$ verticies in total:

$$P_{ijk}(t):=\dfrac{\tfrac{1}{4\pi}\omega_\triangle^t + \dfrac{1}{2}\mbox{round} \left( \dfrac{\Omega_{ijk}}{\pi}\right )}{\mbox{Area}(ijk)}$$

Lets visually inspect the results inside jReality\footnote{\url{http://www3.math.tu-berlin.de/jreality/}}, a visualisation software created inside the Technical University of Berlin. The code written simply computes $P_{ijk}(t)$ on each surface for a fixed $t\geq0$ and colorized the minimal value blue, the maximal value red, and all the values in between with a rainbow color gradient.

The base of the program was written by Felix Kn\"oppel in pursuit of the the original hunch of the non-trivial distribution of zeros as noticed in the \emph{Global Optimal Direction Field} paper\cite{Knoppel:2013:GOD}. In Felix's code smoothed sample sections where counted through as seen in figure (\ref{fig:oldschool}). The size of the points determine the quantity and the color the value of the of the index sum. 

\begin{figure}
\hfill
\includegraphics[height=6cm]{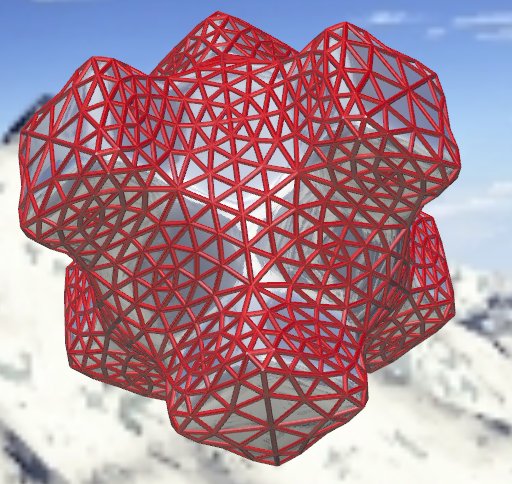}
\hfill
\includegraphics[height=6cm]{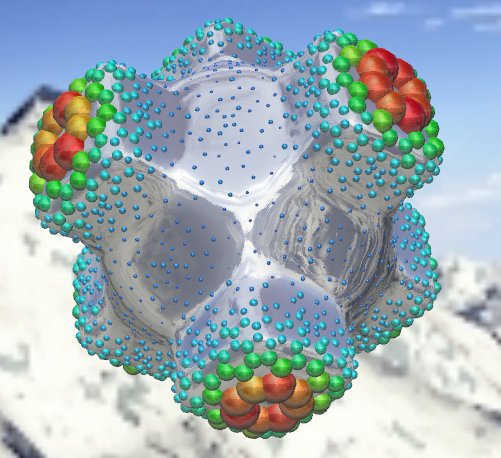}
\hfill\hspace{0.1cm}
\caption{1000 smoothed sample sections and their accumulated indices counted on each face of the closed \emph{schwarz patch} surface}
\label{fig:oldschool}
\end{figure}

To ease the computation of $P_{ijk}(t)$ we use theorem (\ref{smoothing convergence}) to justify that eigenvectors to higher eigenvalues become more irrelevant with increasing $t$ and are therefore excluded from the computation. In other words, we choose $1\leq k\leq n \in \mathbb{N}$ and for any section $\phi\in\Gamma(L)$ we project it orthogonally onto $span(\varphi_1,...,\varphi_k)$ using $\pi_k$ where $\varphi_i$ are the eigenvectors of $S_t$ and $\Delta$ ordered by the size of their respective eigenvalues. In a nutshell we cut off eigenvectors of higher eigenvalues:

$$\phi=\sum_{i=1}^n\mu_i\varphi_i\Rightarrow\pi_k(\phi)=\sum_{i=1}^k\mu_i\varphi_i$$

This allows faster computations of $\omega_\triangle^t$ with similar precision. For grater values of $t$ we can decrease the value of $k$ as long as $k\geq \dim(Eig(\lambda_1))$ so that the smoothing convergence determines that the same limit will be met. In formal words:

$$\text{if } k\geq \dim(Eig(\lambda_1)) \Rightarrow \lim_{t\rightarrow\infty} [S_t\phi]=\lim_{t\rightarrow\infty} [S_t\circ \pi_k(\phi)]$$

This becomes evident when observing the prove of theorem (\ref{smoothing convergence}) in section (\ref{sec:Smoothing Operator}).

Let us observe the effects on an ellipse in figure (\ref{fig:newschool}) from a slightly tilted view. Above the geometry we see its $P_{ijk}(t)$ value histogram distribution from its minimum to its maximum. In the first picture we still see the effects caused by cutting down $\phi$ to $\pi_k(\phi)$ as $t=0$. With increasing $t$ the positive zeros become more likely at the pointy ends of the ellipsoid while most of the surface has index zero. By Poincare-Hopf, the index sum of an ellipse is equal to $\chi($ ellipse $)=2$, which lets us guess that the ellipsoid will acquire only one positive zeros at each of the pointiest tips. This figure was loaded from a random simplicial simplex in the shape of an ellipsoid, meaning that we had to compute $\Omega$ ourselfs and thus have $\Omega_{ijk}\in(-\pi,\pi)$.

\begin{figure}
\centering
\includegraphics[width=7cm]{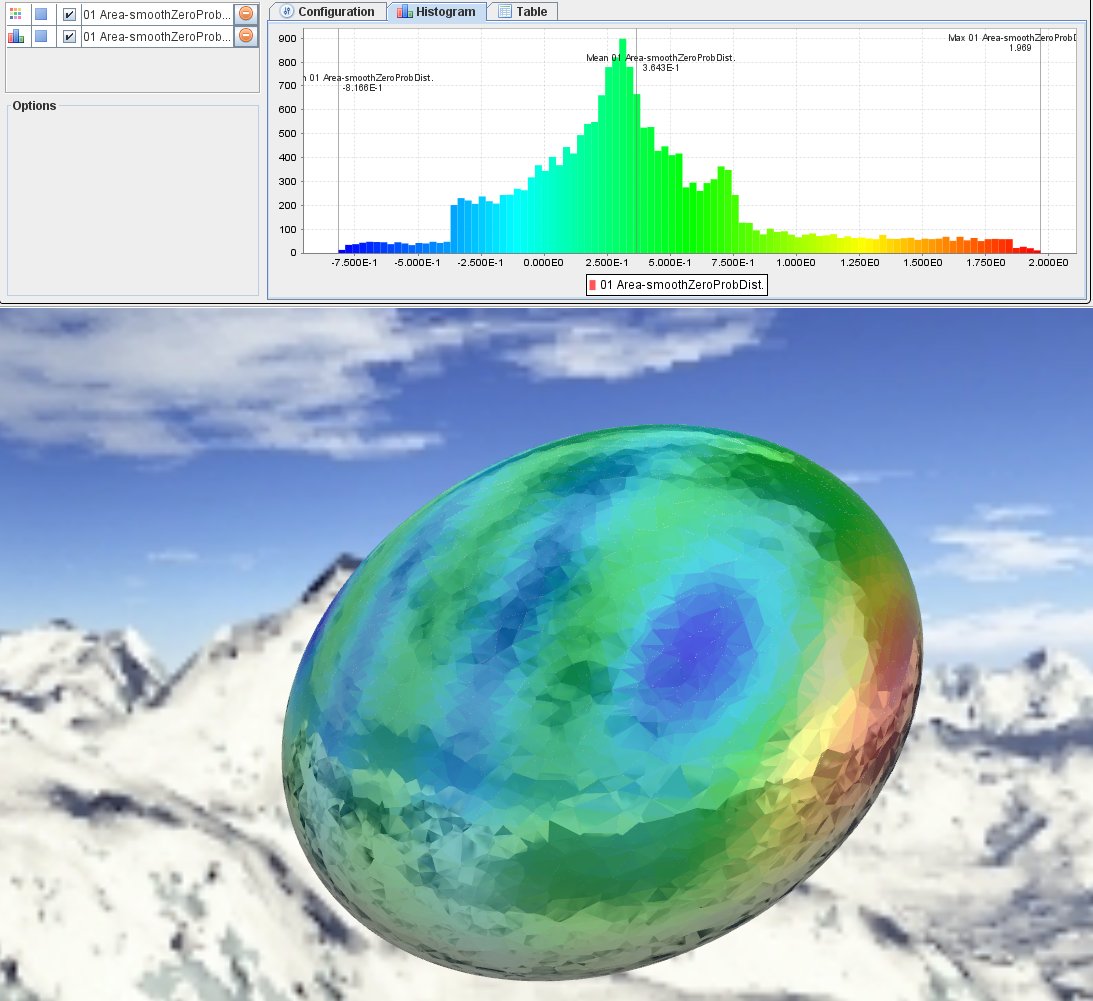}
\includegraphics[width=7cm]{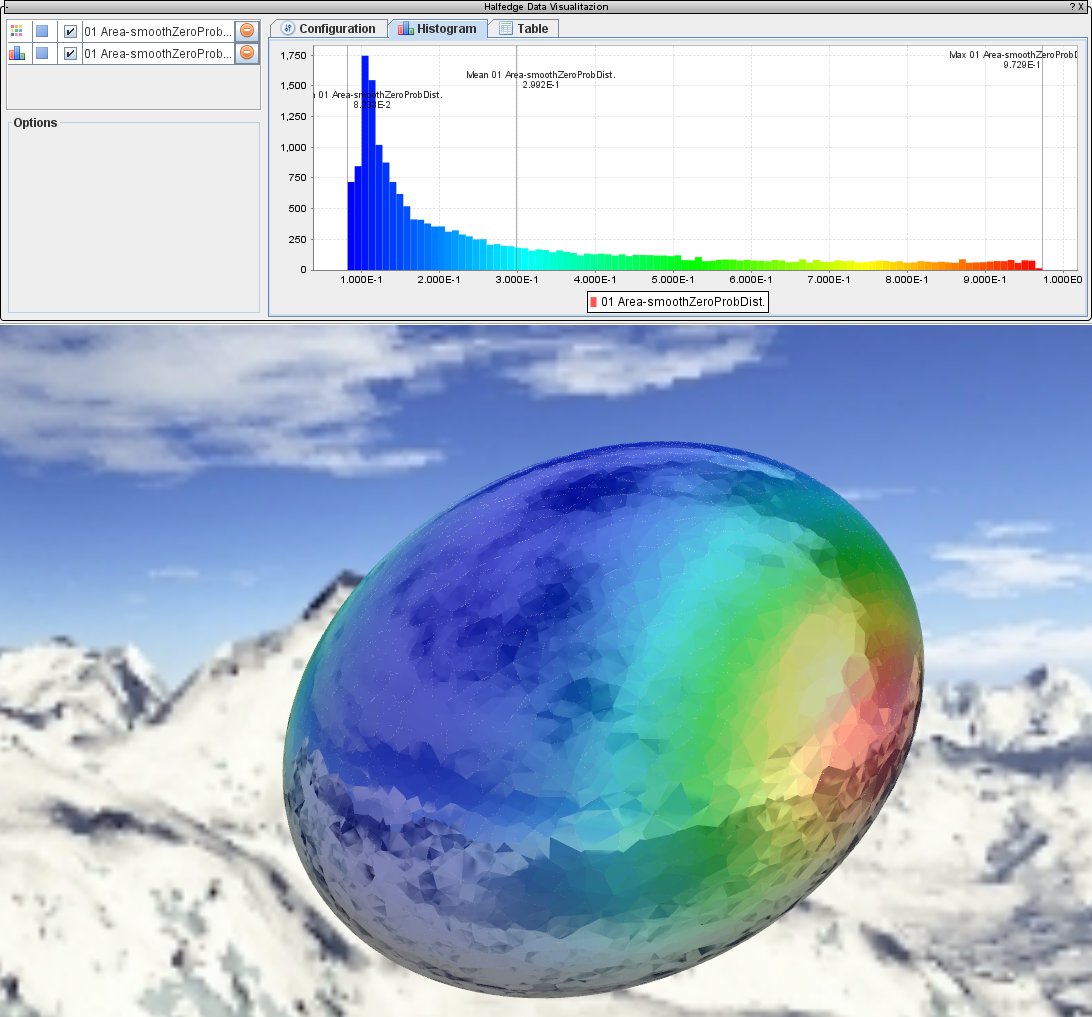}\\
\includegraphics[width=7cm]{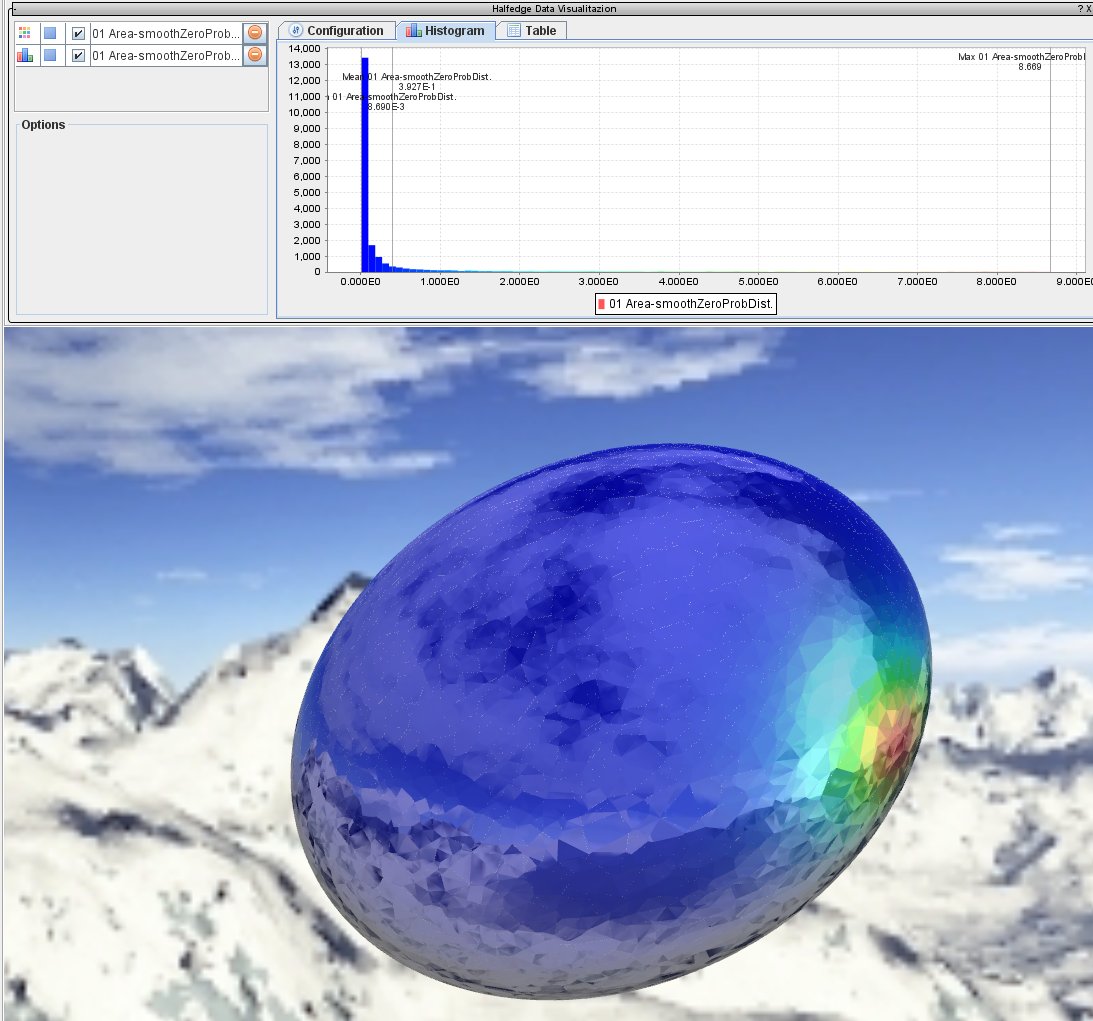}
\includegraphics[width=7cm]{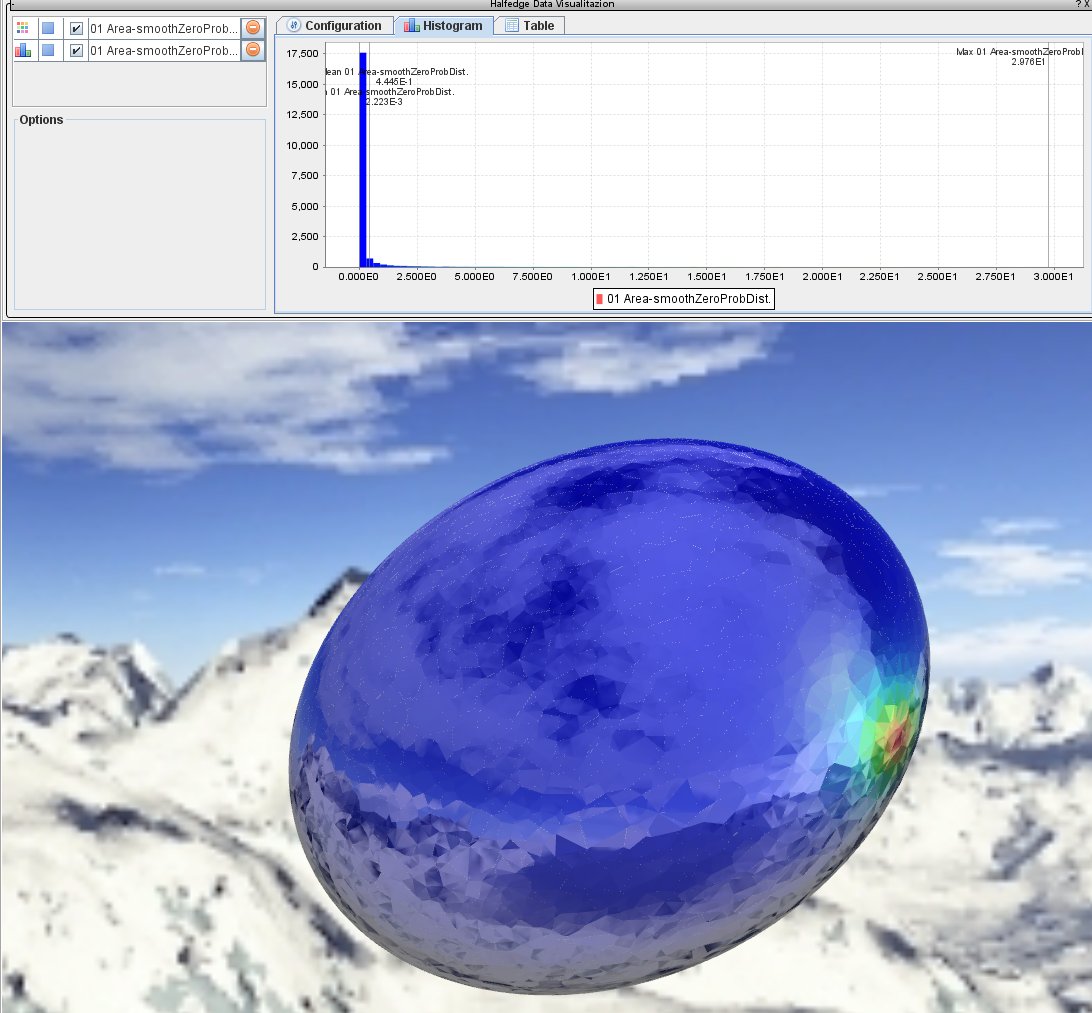} 
\caption{ellipse with 10k vertices. From left to right, top to bottom the value of $t$ was increased starting at $0$.}
\label{fig:newschool}
\end{figure}

\newpage

\bibliographystyle{amsalpha}
\bibliography{Ref_Marcel_Padilla_Bachelor_Thesis}

\end{document}